\documentclass[a4paper,reqno]{amsart}
\usepackage[a4paper,margin=1in]{geometry}
\usepackage[T1]{fontenc}

\usepackage{microtype}
\usepackage{mathtools}
\usepackage{amsmath}
\usepackage{amssymb}
\usepackage{amsthm}
\usepackage[dvipsnames]{xcolor}
\usepackage{hyperref}
\hypersetup{
    colorlinks,
    linkcolor={MidnightBlue},
    citecolor={Green},
    urlcolor={Maroon}
}
\usepackage{tikz-cd}
\usetikzlibrary{arrows}
\usepackage{ytableau}
\usepackage{relsize} 
\usepackage{savesym}  
\usepackage{enumitem}
\usepackage{comment}

\usepackage{aliascnt}
\usepackage[nameinlink,capitalise,noabbrev]{cleveref}

\newtheoremstyle{plainnopunct}
  {}{}{\itshape}{}{\bfseries}{}{.5em}{}

\theoremstyle{plainnopunct}

\newtheorem{theorem}{Theorem}[section]
\newaliascnt{proposition}{theorem}
\newtheorem{proposition}[proposition]{Proposition}
\aliascntresetthe{proposition}
\newaliascnt{lemma}{theorem}
\newtheorem{lemma}[lemma]{Lemma}
\aliascntresetthe{lemma}
\newaliascnt{corollary}{theorem}
\newtheorem{corollary}[corollary]{Corollary}
\aliascntresetthe{corollary}

\newtheoremstyle{definitionnopunct}
  {}{}{\normalfont}{}{\bfseries}{}{.5em}{}

\theoremstyle{definitionnopunct}

\newaliascnt{definition}{theorem}
\newtheorem{definition}[definition]{Definition}
\aliascntresetthe{definition}
\newaliascnt{example}{theorem}
\newtheorem{example}[example]{Example}
\aliascntresetthe{example}
\newaliascnt{remark}{theorem}
\newtheorem{remark}[remark]{Remark}
\aliascntresetthe{remark}

\savesymbol{wedge}
\restoresymbol{AMS}{wedge}
\newcommand{\superwedge}{\mathlarger{\mathlarger{\wedge}}}

\newcommand{\C}{\mathbb{C}}
\renewcommand{\P}{\mathbb{P}}

\newcommand{\N}{\mathbb{N}}
\renewcommand{\S}{\mathbb{S}}
\newcommand{\GL}{\mathrm{GL}}
\newcommand{\SL}{\mathrm{SL}}
\newcommand{\Lie}{\mathfrak}
\newcommand{\cO}{\mathcal{O}}
\newcommand{\cM}{\mathcal{M}}
\newcommand{\cR}{\mathcal{R}}
\newcommand{\cI}{\mathcal{I}}

\newcommand{\rank}{\operatorname{rank}}
\newcommand{\codim}{\operatorname{codim}}

\newcommand{\Sym}{\operatorname{Sym}}
\newcommand{\ind}{\operatorname{Ind}}
\newcommand{\dist}{\operatorname{dist}}

\newcommand{\be}{\mathbf{e}}
\newcommand{\bx}{\mathbf{x}}
\newcommand{\bv}{\mathbf{v}}
\newcommand{\bd}{\mathbf{d}}

\newcommand{\SchurDouble}[1]{I_{\mathrm S}(#1)^{\langle 2\rangle}}

\newcommand{\bepsilon}{\boldsymbol{\varepsilon}}

\DeclareMathOperator{\expdim}{expdim}
\DeclareMathOperator{\cont}{cont}
\DeclareMathOperator{\Gr}{Gr}
\DeclareMathOperator{\Fl}{Fl}

\usepackage[textwidth=0.8in]{todonotes}
\title{Secant varieties of flag varieties via Schur apolarity}

\author{Alessandra Bernardi}
\address[Alessandra Bernardi]{Università di Trento\\
Via Sommarive, 14\\38123 Povo (Trento), 
Italy}
\email{alessandra.bernardi@unitn.it}

\author{Stefano Canino}
\address[Stefano Canino]{Università di Trento\\
Via Sommarive, 14\\38123 Povo (Trento), 
Italy}
\email{stefano.canino@unitn.it}

\author{Vincenzo Antonio Isoldi}
\address[Vincenzo Antonio Isoldi]{Max Planck Institute for Mathematics in the Sciences\\
Inselstr. 22\\04103 Leipzig, 
Germany }
\email{isoldi@mis.mpg.de}

\makeatletter
\let\original@setaddresses\@setaddresses
\renewcommand{\@setaddresses}{%
  \begingroup
  \setlength{\parindent}{0pt}%
  \original@setaddresses
  \endgroup
}
\makeatother

\begin{document}

\begin{abstract}
We develop a general first-order theory of Schur apolarity for the
study of secant varieties of flag varieties in arbitrary homogeneous
embeddings. Extending the classical apolarity--fat-point
correspondence for Veronese varieties, we show that in the Schur
setting the algebraic square of the apolar ideal need not coincide
with the geometric double-point conditions. We introduce a geometric
Schur square whose
relevant component is the conormal space, yielding a Schur Dual
Terracini Lemma. Our construction recovers classical apolarity in the
symmetric case. A slot-by-slot Consistency Theorem realizes these
intrinsic conditions as multigraded double points. As an application, we determine the dimensions of all secant varieties of $\Fl(1,2;V_n)$ embedded by
$\cO(1,1)$: the only defective cases are
$\sigma_2(\Fl(1,2;V_3))$ and $\sigma_3(\Fl(1,2;V_4))$, both of defect
one.
\end{abstract}

\maketitle

\section{Introduction}

\subsection{Motivation and guiding questions}

Secant varieties lie at the intersection of projective geometry,
representation theory, and tensor decomposition. Let
$X\subseteq\P(W)$
be an irreducible nondegenerate projective variety. Its $s$-th secant
variety $\sigma_s(X)$ is the Zariski closure of the union of the
projective linear spaces spanned by $s$ points of $X$. When $X$
parametrizes tensors of a prescribed decomposable type, membership in
$\sigma_s(X)$ expresses border rank at most $s$ with respect to the
corresponding notion of decomposability. Determining the dimensions
of secant varieties, and in particular identifying the defective
ones, is therefore a central problem in the geometry of tensors; see
\cite{bernardi_hitchhiker_2018}.

Apolarity has played a fundamental role in this problem. In the
symmetric setting, classical apolarity translates questions about
Veronese varieties and Waring decompositions into questions about
ideals, catalecticant maps, and zero-dimensional schemes; see
\cite{IarrobinoKanev1999,bernardi_hitchhiker_2018}. Its interaction
with Terracini's Lemma turns the computation of secant dimensions into
an interpolation problem for fat points. This point of view underlies
the classification of the defective secant varieties of Veronese
varieties provided by the Alexander--Hirschowitz theorem
\cite{AlexanderHirschowitz1995,BrambillaOttaviani2008}. The
skew-symmetric apolarity, introduced in
\cite{ArrondoBernardiMarquesMourrain2021} gives an analogous
apolarity language for skew-symmetric tensors and Grassmannians.

Schur apolarity, introduced by Staffolani in
\cite{StaffolaniThesis,Staffolani2023}, places the symmetric and
skew-symmetric constructions in a common representation-theoretic
framework. Indeed, the Schur apolarity action specializes to
classical symmetric apolarity for one-row partitions and to
skew-symmetric apolarity for one-column partitions. More generally,
it is defined on arbitrary Schur modules, whose decomposable tensors
are parametrized by closed highest-weight orbits, and hence by partial
flag varieties in homogeneous embeddings.

This makes Schur apolarity a natural candidate for extending the
apolarity-based study of secant varieties from Veronese varieties and
Grassmannians to arbitrary flag varieties. The Schur Apolarity Lemma
of \cite{Staffolani2023} already gives a uniform criterion for
membership in the span of points of a highest-weight orbit. It is,
however, a zero-order statement: it detects linear spans of points,
but does not by itself describe tangent spaces or the first-order
conditions required by Terracini's Lemma.

The purpose of this paper is to develop precisely this missing
first-order theory. Our guiding principle is that Schur apolarity
should provide not only an apolarity lemma for decompositions, but
also a geometric and computational language for the dimensions of
secant varieties of flag varieties.

The classical model is the Veronese variety. Let
\[
p=[v]\in\P(V),
\qquad
P=\nu_d(p)=[v^d]\in\nu_d(\P(V)).
\]
The classical statement usually attributed to Lasker
\cite{Lasker1904} gives
\[
\left(
\widehat T_P\nu_d(\P(V))
\right)^\perp
=
\left(
I_{p,\P(V)}^2
\right)_d;
\]
see also
\cite[Proposition~2.1]{BrambillaOttaviani2008} and
\cite{Ottaviani2013}. Thus, the conormal space is the degree-$d$
component of the ideal of the geometric double point supported at
$p$. Combined with the dual form of Terracini's Lemma, this
identification transforms the computation of secant dimensions into
the postulation of unions of double points.

For a point
\[
P\in F_{\lambda,n}
\subseteq
\P(\S_\lambda V),
\]
where $F_{\lambda,n}$ denotes the closed highest-weight orbit
associated with $\lambda$, the most direct Schur-theoretic analogue would be to take the $\lambda$-component of the algebraic square of the Schur apolar ideal.
A first observation of this paper is that this construction fails in
general. Multiplication in the dual Schur algebra is defined
representation-theoretically and does not necessarily encode
first-order vanishing along the corresponding highest-weight orbit.
Consequently, a genuine extension of the classical double-point
method requires a new first-order construction.

This raises two related questions. What is the correct Schur-apolar
analogue of the ideal of a geometric double point? Once this object
has been identified intrinsically, can it be realized in a form
suitable for interpolation, specialization, and Horace-type
arguments?

Our first main contribution is a first-order extension of Schur
apolarity. We show that the direct analogue of Lasker's Lemma obtained
by taking the algebraic square of the Schur apolar ideal fails in
general; see \Cref{prop: algebraic Schur square fails}. We then
introduce the \emph{geometric Schur square} $\SchurDouble{P}$ and
prove the Tangential Schur Apolarity Theorem
\[
\left(
\SchurDouble{P}
\right)_\lambda
=
\left(
\widehat T_PF_{\lambda,n}
\right)^\perp;
\]
see \Cref{thm: tangential Schur apolarity}. Together with
Terracini's Lemma, this gives the Schur Dual Terracini Lemma and
provides an intrinsic Schur-apolar description of the conormal spaces
governing the dimensions of secant varieties of flag varieties.

Our second main contribution is a computational realization of these
intrinsic conditions. We work in the multigraded slot-by-slot algebra
associated with the Pl\"ucker factors of the flag variety. The Consistency Theorem, \Cref{thm: consistency}, gives a necessary
and sufficient combinatorial condition on the Young diagram of
$\lambda$ for ordinary multihomogeneous double points in this algebra
to recover the relevant $\lambda$-component of the geometric Schur
square or, equivalently, the conormal space. Thus, for every consistent
embedding, the secant-dimension problem can be translated into a
multigraded interpolation problem without losing its Schur-apolar
meaning.

The principal application of this framework is the complete
classification of the secant varieties of
\[
F_{(2,1),n}
\simeq
\Fl(1,2;V_n)
\subseteq
\P(\S_{(2,1)}V_n)
\]
in its $\cO(1,1)$-embedding. We prove that, for every $n\geq3$ and
every $s\geq1$,
\[
\sigma_s(F_{(2,1),n})
\text{ is defective}
\quad\Longleftrightarrow\quad
(n,s)\in\{(3,2),(4,3)\},
\]
and both exceptional secant varieties have defect one; see
\Cref{thm: classification two step flags}.

The proof is based on a Horace-type induction carried out inside the
slot-by-slot realization of the Schur-apolar conormal conditions.
Thus, the trace and residual constructions are not applied after
discarding the Schur structure: the relevant linear systems arise
directly from the Schur Dual Terracini Lemma and the Consistency Theorem. The
classification above should therefore be regarded as the first
full-scale application of the framework developed here, rather than
as its endpoint.

\subsection{Previous work}

The secant geometry of homogeneous varieties has been studied from
several perspectives. Baur and Draisma investigated higher secant
varieties of the minimal adjoint orbit and secant dimensions of
low-dimensional homogeneous varieties, obtaining classifications and
explicit defective examples in small dimension
\cite{Baur2004,BD10}.

Barbosa Freire, Casarotti, and Massarenti established
nondefectivity and identifiability results for partial flag varieties
by degenerating tangent spaces to higher osculating spaces
\cite{BFCM22}. Blomenhofer and Casarotti subsequently obtained
general nondefectivity bounds for invariant varieties contained in
projectivizations of irreducible representations; their results
apply, in particular, to closed highest-weight orbits
\cite{BC23}. These works provide low-dimensional classifications and broad ranges
of nondefectivity. They do not, however, settle the critical secant
orders near the generic rank that are needed for a complete
classification of
\[
F_{(2,1),n}
\simeq
\Fl(1,2;V_n)
\subseteq
\P(\S_{(2,1)}V_n)
\]
in its $\cO(1,1)$-embedding.

The present paper approaches the problem from a different direction.
Rather than beginning with a particular family of flag varieties, we
develop the Schur-apolar machinery needed to study secant dimensions
uniformly. To the best of our knowledge, Schur apolarity had not
previously been developed into a general first-order method for the
secant geometry of flag varieties.

Our contribution complements the existing nondefectivity results in
two ways. First, it extends Schur apolarity from the study of linear
spans of decomposable Schur tensors to tangent and conormal spaces.
Second, it connects this intrinsic construction with ordinary
multigraded double points, making interpolation and specialization
methods available in the Schur setting. The complete classification
of $F_{(2,1),n}$ addresses, in particular, the critical secant orders
near the generic rank that are not controlled by the available
general bounds.

\subsection{Organization of the paper}

In \Cref{sect: preliminary}, we recall the required background on
partitions, tableaux, Schur modules, homogeneous embeddings of partial
flag varieties, secant varieties, and Terracini's Lemma. We also fix
the Cartan realization and the tableau bases used throughout the
paper.

In \Cref{sect: tangential Schur apolarity}, we recall the dual Schur
algebra and the Schur apolarity action, emphasizing their
specializations to symmetric and skew-symmetric apolarity. We then
describe tangent and conormal spaces, prove the failure of the
algebraic Schur square, introduce the geometric Schur square, and
establish Tangential Schur Apolarity and the Schur Dual Terracini
Lemma.

In \Cref{sect: slotwise double points}, we introduce the multigraded
slot-by-slot algebra and compare ordinary geometric double points in
the product of Pl\"ucker spaces with the intrinsic geometric Schur
square. This leads to the Consistency Theorem.

In \Cref{sect: defectiveness}, we use the slot-by-slot realization to
construct and study defective secant varieties of low-dimensional
flag varieties.

Finally, \Cref{sec: secants two step flags} is devoted to $F_{(2,1),n}
\simeq
\Fl(1,2;V_n)$.
We translate the secant-dimension problem into a multigraded
double-point interpolation problem, develop the inductive
specialization and restriction arguments, and complete the
classification of all secant varieties of this family.

\section*{Acknowledgements}
\noindent We thank J. Buczyński and J. Jelisiejew for fruitful discussions during the 2025 AIVAR workshop (IMPAN in Warsaw, Poland). 

Part of this work was developed in VAI's Master's thesis \cite{IsoldiThesis} and while AB was visiting the Simons Institute for the Theory of Computing, during the program on Complexity and Linear Algebra, in Fall 2025.

AB and SC are members of GNSAGA (INdAM). 

AB has been partially funded by the European Union under NextGeneration EU. PRIN 2022 Prot. n. 2022ZRRL4C 004. Views and opinions expressed are however those of the authors only and do not necessarily reflect those of the European Union or European Commission. Neither the European Union nor the granting authority can be held responsible for them.

SC has been funded by the Italian Ministry of University and Research in the framework of the Call for Proposals for scrolling of final rankings of the PRIN 2022 call - Protocol no. 2022NBN7TL and by the project \textit{Thematic Research Programmes}, Action I.1.5 of the program \textit{Excellence Initiative -- Research University} (IDUB) of the Polish Ministry of Science and Higher Education.

VAI gratefully acknowledges the Max Planck Society, in particular MPI-CBG and MPI MiS, for their hospitality and support during his stay from January to June 2026, when this work was completed.

The authors acknowledge the TensorDec Laboratory of the Department of Mathematics of the University of Trento, of which AB and SC are currently members and VAI is a former member, for helpful discussions.

\section{Schur modules, flag varieties, and secant varieties}
\label{sect: preliminary}

\noindent Throughout, $V$ is a complex vector space of dimension $n$, with fixed basis $\{e_1,\dots,e_n\}$ and dual basis $\{x_1,\dots,x_n\}$. We write $G\coloneqq\GL(V)$. 

\subsection{Young tableaux, Schur modules, and flag varieties}
\label{subsect: tableaux}

We use the standard conventions for partitions, Young diagrams, tableaux, and Schur modules. We recall only the notation needed in the sequel and refer to~\cite{Fulton1997,FultonHarris1991} for the classical background.

A partition of a positive integer $d$ is a weakly decreasing finite sequence $\lambda=(\lambda_1,\ldots,\lambda_{\ell(\lambda)})$
of positive integers such that $|\lambda|
\coloneq
\sum_{i=1}^{\ell(\lambda)}\lambda_i
=
d$.
We write $\lambda\vdash d$. Its conjugate partition is denoted by $\lambda'$. For $m\geq1$, we set
\[
[m]\coloneq\{1,\ldots,m\},
\qquad
[0]\coloneq\emptyset.
\]
For two partitions $\mu$ and $\lambda$, we write $\mu\subseteq\lambda$ if $\mu_i\leq\lambda_i$ for every $i$, and we denote by $\lambda/\mu$ the corresponding skew diagram. A tableau of shape $\lambda$ is called \emph{semistandard} if its entries are strictly increasing down each column and weakly increasing along each row. Unless otherwise stated, all tableaux considered below have entries in $[n]$.

\begin{remark}\label{rem: determinant twist}
Suppose that $\lambda'$ contains $q$ columns of height $n$, and let $\overline{\lambda}$ be the partition obtained by deleting these columns. Then
\[
\S_\lambda V
\simeq
(\det V)^{\otimes q}\otimes\S_{\overline{\lambda}}V.
\]
Since the determinant factor is one-dimensional, it does not change the associated closed orbit in projective space. Therefore, when studying the projective geometry of highest-weight orbits, we may and will assume that $\ell(\lambda)<n$; see \cite{FultonHarris1991}.
\end{remark}

Accordingly, throughout the paper we write the conjugate partition in block form as
\[\lambda'
=
(n_k^{d_k},\ldots,n_1^{d_1}),
\qquad
0<n_1<\cdots<n_k<n,
\qquad
d_i\geq1,
\]meaning that $\lambda'$ has exactly $d_i$ columns of height $n_i$. We also set
\[
\bd
\coloneq
(d_1,\ldots,d_k),
\qquad
n_0\coloneq0,
\qquad
n_{k+1}\coloneq n.
\]
For an increasing set 
$J=\{j_1<\cdots<j_r\}\subseteq[n]$,
we use the notation
\[
e_J
\coloneq
e_{j_1}\wedge\cdots\wedge e_{j_r},
\qquad
x_J
\coloneq
x_{j_1}\wedge\cdots\wedge x_{j_r}.
\]

\begin{definition}
\label{def: tableau data}
Let $R$ be a tableau of shape $\lambda$ with entries in $[n]$ and strictly increasing columns, with no row condition unless explicitly stated. If the entries in its $c$-th column form the increasing set $J_R^{(c)}\subseteq[n]$, we define its \emph{column index sequence} by
\[
\ind(R)
\coloneq
\bigl(
J_R^{(1)},\ldots,J_R^{(\lambda_1)}
\bigr).
\]
The \emph{content} of $R$ is the vector
\[
\cont(R)
\coloneq
(m_1,\ldots,m_n)\in\N^n,
\qquad
m_a
\coloneq
\#\bigl\{
c\mid a\in J_R^{(c)}
\bigr\}.
\]
We denote by $S_0$ the \emph{highest-weight tableau} of shape $\lambda$, characterized by
\[
J_{S_0}^{(c)}
=
[\lambda'_c]
\qquad
\text{for every }
c=1,\ldots,\lambda_1.
\]
Accordingly, we call $[\lambda'_c]$ the highest-weight column of height $\lambda'_c$.
For two tableaux $R_1$ and $R_2$ of shape $\lambda$ with strictly increasing columns, their \emph{column distance} is
\[
\dist(R_1,R_2)
\coloneq
\sum_{c=1}^{\lambda_1}
\left|
J_{R_1}^{(c)}
\setminus
J_{R_2}^{(c)}
\right|.
\]
Since corresponding columns have the same cardinality, one has
\[
\left|
J_{R_1}^{(c)}
\setminus
J_{R_2}^{(c)}
\right|
=
\left|
J_{R_2}^{(c)}
\setminus
J_{R_1}^{(c)}
\right|,
\]
and hence $\dist(R_1,R_2)=\dist(R_2,R_1)$.
An entry in the $c$-th column of $R$ is called \emph{non-highest-weight} if it belongs to $J_R^{(c)}\setminus[\lambda'_c]$.
The $c$-th column is called non-highest-weight if $J_R^{(c)}
\neq
[\lambda'_c]$.
\end{definition}

For a partition $\lambda$ with $\ell(\lambda)\leq n$, we denote by $\S_\lambda V$ the irreducible polynomial $G$-module of highest weight $\lambda$, and by $\S_\lambda V^*$ its dual. We use the usual column realization of Schur modules; see \cite{Fulton1997,FultonHarris1991}.

\begin{definition}[Embedded flag variety]
\label{def: embedded flag variety}
Let $\bv_{\lambda,n}\in\S_\lambda V$ be a nonzero highest-weight vector and set $P_{\lambda,n}
\coloneq
[\bv_{\lambda,n}]
\in
\P(\S_\lambda V)$.
We denote by $F_{\lambda,n}
\coloneq
G\cdot P_{\lambda,n}
\subseteq
\P(\S_\lambda V)$
the embedded flag variety associated with $\lambda$ and $n$. We also set $N_{\lambda,n}
\coloneq
\dim\P(\S_\lambda V)
=
\dim\S_\lambda V-1$.
\end{definition}

\begin{remark}
\label{rem: geometric description embedded flag}
By the highest-weight orbit construction, $F_{\lambda,n}$ is the unique closed $G$-orbit in $\P(\S_\lambda V)$ and is naturally isomorphic to the partial flag variety
\[
\Fl(n_1,\ldots,n_k;V)
\coloneq
\left\{
W_1\subsetneq\cdots\subsetneq W_k\subsetneq V
\ \middle|\
\dim W_i=n_i
\right\}.
\]
Under the identification above, the embedding $F_{\lambda,n}
\subseteq
\P(\S_\lambda V)$ is induced by the very ample line bundle $\cO_{\Fl(n_1,\ldots,n_k;V)}
(d_1,\ldots,d_k)$. These are standard consequences of highest-weight theory and of the Pl\"ucker description of partial flag varieties; see \cite{Fulton1997,FultonHarris1991}. Let $W_i^0
\coloneq
\langle e_1,\ldots,e_{n_i}\rangle$.
The standard flag
$W_1^0
\subsetneq
\cdots
\subsetneq
W_k^0$
corresponds to the point $P_{\lambda,n}$. In the usual column realization, one has, up to a nonzero scalar,
\[
\bv_{\lambda,n}
\sim
e_{[n_k]}^{\otimes d_k}
\otimes\cdots\otimes
e_{[n_1]}^{\otimes d_1}.
\]
In particular, $\bv_{\lambda,n}$ corresponds to the highest-weight tableau $S_0$ of \Cref{def: tableau data}. Finally,
\[\dim F_{\lambda,n}
=
\sum_{i=1}^k
(n_i-n_{i-1})(n-n_i).
\]\end{remark}

\subsection{Cartan realization and tableau bases}
\label{subsect: cartan realization}
The following classical construction realizes $\S_\lambda V$ as the Cartan component of a tensor product of symmetric powers of exterior powers. We refer to \cite{Fulton1997,FultonHarris1991} for the representation-theoretic background.

\begin{proposition}
\label{prop: cartan realization}
The irreducible $G$-module $\S_\lambda V$ occurs with multiplicity $1$ as the Cartan component of $\bigotimes_{i=1}^k \Sym^{d_i}\!\left(\superwedge^{n_i}V\right)$. We fix a nonzero $G$-equivariant inclusion and its dual surjection
\[\iota_\lambda:
\S_\lambda V
\lhook\joinrel\longrightarrow
\bigotimes_{i=1}^k
\Sym^{d_i}\!\left(\superwedge^{n_i}V\right),
\qquad
\pi_\lambda:
\bigotimes_{i=1}^k
\Sym^{d_i}\!\left(\superwedge^{n_i}V^*\right)
\longrightarrow
\S_\lambda V^*.
\]
Under this realization, the embedding $F_{\lambda,n} \subseteq \P(\S_\lambda V)$ is obtained by restricting the product of the Pl\"ucker embeddings, followed by the Segre--Veronese embedding of multidegree $\bd$, to the incidence locus defining the partial flag variety. Its linear span is the projectivization of the Cartan component $\iota_\lambda(\S_\lambda V)$.
\end{proposition}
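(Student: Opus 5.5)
We argue through weights and the highest-weight vector. First we locate a candidate highest-weight vector in $M\coloneq\bigotimes_{i=1}^k\Sym^{d_i}(\superwedge^{n_i}V)$. Let $B\subseteq G$ be the Borel subgroup of upper triangular matrices in the basis $\{e_j\}$, with torus $T$ of diagonal matrices. The vector
\[
w_0\coloneq e_{[n_k]}^{d_k}\otimes\cdots\otimes e_{[n_1]}^{d_1}
\]
is a $B$-eigenvector. Indeed, each $e_{[n_i]}$ spans the line $\superwedge^{n_i}W_i^0$, and this line is $B$-stable. The weight of $w_0$ is $\sum_i d_i(\epsilon_1+\cdots+\epsilon_{n_i})$. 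The coefficient of $\epsilon_j$ in this sum is the number of columns of $\lambda$ of height at least $j$, which is $\lambda_j$. So $w_0$ has weight $\lambda$, and the $G$-submodule it generates is an irreducible copy of $\S_\lambda V$.

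Next we prove multiplicity one. The $T$-weights of $M$ are the contents of tuples of columns: we choose $d_i$ subsets $J$ of size $n_i$ for each $i$, and the weight is the resulting content vector. We claim $\lambda$ dominates every such weight $\mu$, and that equality forces every chosen set $J$ of size $n_i$ to be $[n_i]$. The dominance follows because $\mu_1+\cdots+\mu_r$ counts the entries $\le r$ over all columns, and a column of size $n_i$ contains at most $\min(r,n_i)$ such entries. That bound summed over the columns is exactly $\lambda_1+\cdots+\lambda_r$. Equality for all $r$ forces every column to be an initial segment. Hence the $\lambda$-weight space of $M$ is one-dimensional, spanned by $w_0$. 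By complete reducibility, $M$ decomposes into irreducibles, and each copy of $\S_\lambda V$ contributes a highest-weight vector of weight $\lambda$. So the multiplicity of $\S_\lambda V$ is $\dim M_\lambda=1$. Moreover, $\lambda$ is the unique maximal weight of $M$, so this component is the Cartan component. This gives the inclusion $\iota_\lambda$, and dualizing gives $\pi_\lambda$. The only nontrivial step is the combinatorial dominance estimate, which is handled above.

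Finally we treat the geometry. The composite $\Fl(n_1,\dots,n_k;V)\to\prod_i\Gr(n_i,V)\to\prod_i\P(\superwedge^{n_i}V)\to\P(M)$ sends a flag $(W_i)$ to $[\bigotimes_i(\wedge^{top}W_i)^{d_i}]$. Here the first map is the inclusion as the incidence locus, the second is the product of Plücker embeddings, and the third is the Segre--Veronese embedding of multidegree $\bd$. The composite is $G$-equivariant and sends the standard flag to $[w_0]$. Since $G$ acts transitively on flags, its image is the orbit $G\cdot[w_0]$. This orbit equals $F_{\lambda,n}$, viewed inside $\P(\iota_\lambda(\S_\lambda V))$. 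The embedding is induced by $\cO(d_1,\dots,d_k)$ by construction, in agreement with \Cref{rem: geometric description embedded flag}.

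For the linear span, the span of $G\cdot w_0$ is a nonzero $G$-submodule of the irreducible module $\iota_\lambda(\S_\lambda V)$. It is therefore all of $\iota_\lambda(\S_\lambda V)$. Hence the linear span of $F_{\lambda,n}$ is the projectivization of the Cartan component.
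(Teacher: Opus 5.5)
Your proof is correct and follows the same highest-weight route as the paper, whose proof only asserts three things and cites \cite{Fulton1997,FultonHarris1991} for them: the sum of the factors' highest weights is $\lambda$, the corresponding summand has multiplicity one, and the geometry is the standard Pl\"ucker--Segre--Veronese realization. Your prefix-count dominance argument (the same bound the paper later uses in the isolated-column lemma) supplies the omitted justification that the $\lambda$-weight space is $\C w_0$, which also gives $\iota_\lambda(\bv_{\lambda,n})\in\C^*w_0$ and hence identifies $G\cdot[w_0]$ with $F_{\lambda,n}$.
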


\begin{proof}
The highest weight of $\bigotimes_{i=1}^k \Sym^{d_i}\!\left(\superwedge^{n_i}V\right)$ is the sum of the weights of its highest-weight factors, namely the weight corresponding to the partition $\lambda$. The associated irreducible summand occurs with multiplicity one and is the Cartan component. The geometric statement is the standard Pl\"ucker--Segre--Veronese realization of the homogeneous embedding defined by $\cO(d_1,\ldots,d_k)$; see \cite{Fulton1997,FultonHarris1991}.
\end{proof}

\begin{definition}
\label{def: tableau monomials}
Let $S$ be a tableau of shape $\lambda$ with strictly increasing columns, and write $\ind(S)
=
\bigl(
J_S^{(1)},\ldots,J_S^{(\lambda_1)}
\bigr)$ as in \Cref{def: tableau data}. We associate with $S$ the element
\[\widetilde{x}_S
\coloneq
\bigotimes_{i=1}^k
\left(
\prod_{\substack{1\leq c\leq\lambda_1\\
                  \lambda'_c=n_i}}
x_{J_S^{(c)}}
\right)
\in
\bigotimes_{i=1}^k
\Sym^{d_i}\!\left(\superwedge^{n_i}V^*\right).\]
The product in each tensor factor is well defined because the columns of a fixed height are grouped inside the corresponding symmetric power.
\end{definition}

For a semistandard tableau $S$ of shape $\lambda$ with entries in $[n]$, we set
\[\bx_S
\coloneq
\pi_\lambda\!\left(\widetilde{x}_S\right)
\in
\S_\lambda V^*.\]

\begin{proposition}
\label{prop: semistandard tableau basis}
The set $\left\{ \bx_S \ \middle|\ S \text{ is a semistandard tableau of shape $\lambda$ with entries in $[n]$} \right\}$ is a basis of $\S_\lambda V^*$.
\end{proposition}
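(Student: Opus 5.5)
The plan is to prove the statement in two steps: first that the $\bx_S$ span $\S_\lambda V^*$, then that there are exactly $\dim \S_\lambda V^*$ of them, so they form a basis.

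\emph{Spanning.} Expanding each $\Sym^{d_i}$ factor in the Plücker coordinates $x_J$ with $|J|=n_i$, the elements $\widetilde{x}_S$ span $\bigotimes_{i=1}^k \Sym^{d_i}\bigl(\superwedge^{n_i}V^*\bigr)$. Here $S$ ranges over all tableaux of shape $\lambda$ with strictly increasing columns, and each column of a given height may be placed in any position among the columns of that height. Since $\pi_\lambda$ is surjective by \Cref{prop: cartan realization}, the elements $\pi_\lambda(\widetilde{x}_S)$ span $\S_\lambda V^*$.

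We then reduce to semistandard tableaux in two stages.
\begin{enumerate}
\item Reordering columns of equal height does not change $\widetilde{x}_S$, because the product in each symmetric power is commutative. We may therefore assume that, within each block of equal height, the columns are ordered lexicographically.
\item We apply the Garnir (quadratic Plücker, or straightening) relations. The kernel of $\pi_\lambda$ is the complement of the Cartan component. This kernel contains the images of the exchange relations between pairs of adjacent columns: for columns $J\supseteq$ rows-wise comparison failing at row $r$, the usual shuffle sum is annihilated in the Cartan component. These are precisely the relations defining the Schur module in the column realization of \cite{Fulton1997}, which is the Schur module of $\lambda$ with columns grouped by height as in the Cartan realization.
\end{enumerate}

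The standard straightening algorithm then rewrites any $\bx_S$, modulo these relations, as a combination of $\bx_T$ with $T$ semistandard. Termination follows from an order on column words that strictly increases at each step, as in \cite[\S8.1]{Fulton1997}.

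\emph{Counting.} The number of semistandard tableaux of shape $\lambda$ with entries in $[n]$ equals $s_\lambda(1,\ldots,1)=\dim\S_\lambda V=\dim \S_\lambda V^*$, by the Weyl character formula or by the combinatorial definition of Schur polynomials. A spanning set of this cardinality is a basis.

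The main obstacle is the second stage of the spanning step. We must check that the Garnir relations really lie in $\ker\pi_\lambda$ for our fixed choice of Cartan realization, rather than for Fulton's quotient presentation. I would handle this by identifying both constructions with the same irreducible module. Fulton's presentation $\S_\lambda V^*=\bigotimes_c \superwedge^{\lambda'_c}V^*/Q$ factors through $\bigotimes_i \Sym^{d_i}\bigl(\superwedge^{n_i}V^*\bigr)$, because the column-swap relations among equal-height columns are among the defining relations. The induced surjection onto the irreducible module $\S_\lambda V^*$ is $G$-equivariant, and by Schur's lemma and multiplicity one it agrees with $\pi_\lambda$ up to a nonzero scalar. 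Hence $Q\subseteq\ker$, and Fulton's basis theorem transports directly.
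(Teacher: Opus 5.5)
Your proposal is correct and follows essentially the same route as the paper, which simply invokes the classical semistandard tableau basis and transports it through the Cartan projection $\pi_\lambda$. Your last paragraph supplies exactly the identification the paper leaves implicit: Fulton's presentation factors through $\bigotimes_i\Sym^{d_i}(\superwedge^{n_i}V^*)$ because swaps of equal-height columns are exchange relations, and multiplicity one together with Schur's lemma make the induced surjection a nonzero multiple of $\pi_\lambda$. Once that is in place the basis theorem transports verbatim, so your separate straightening and counting steps are correct but redundant.
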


\begin{proof}
This is the classical semistandard tableau basis of the Schur module, expressed through the Cartan projection fixed in \Cref{prop: cartan realization}.
\end{proof}

We denote by $\left\{ \be_S \ \middle|\ S \text{ is a semistandard tableau of shape $\lambda$ with entries in $[n]$} \right\}$ the corresponding dual basis of $\S_\lambda V$. Thus, with respect to the natural evaluation pairing
\[
\langle\ ,\ \rangle_\lambda:
\S_\lambda V^*\otimes\S_\lambda V
\longrightarrow
\C,
\]
one has
\begin{equation}
\label{eq: dual tableau bases}
\langle\bx_S,\be_T\rangle_\lambda
=
\delta_{S,T}.
\end{equation}

\begin{remark}
\label{rem: highest weight basis vector}
We normalize the Cartan maps and the highest-weight vector so that $\bv_{\lambda,n}
=
\be_{S_0}$. 
Under the inclusion $\iota_\lambda$, this vector is represented, up to the chosen normalization, as described in \Cref{rem: geometric description embedded flag}, by 
$e_{[n_k]}^{\otimes d_k} \otimes\cdots\otimes e_{[n_1]}^{\otimes d_1}$.
For a general semistandard tableau $S$, however, $\be_S$ denotes the element of the dual tableau basis and should not be identified with the simple tensor obtained by tensoring the exterior products associated with the columns of $S$. Its expression in the Cartan component may be a linear combination of such simple tensors.
\end{remark}

\subsection{Secant varieties and Terracini's Lemma}
\label{subsect: secant and terracini}

We recall the standard notions concerning secant varieties and
Terracini's Lemma, referring to \cite{bernardi_hitchhiker_2018} for further details. 

Let $W$ be a finite-dimensional complex vector space and let $X\subseteq\P(W)$ be an irreducible nondegenerate projective variety.

\begin{definition}
\label{def: secant variety}
For an integer $s\geq1$, the \emph{$s$-th secant variety} of $X$ is
\[
\sigma_s(X)
\coloneq
\overline{
\bigcup_{P_1,\ldots,P_s\in X}
\langle P_1,\ldots,P_s\rangle
}
\subseteq
\P(W),
\]
where the  closure is taken under the Zariski topology and $\langle P_1,\ldots,P_s\rangle$ is the projective linear span of the points $P_1,\ldots,P_s$. 
\end{definition}

\begin{definition}
\label{def: secant defectivity}
Set $m\coloneq\dim X$, and $N\coloneq\dim\P(W)$. The \emph{expected dimension} of $\sigma_s(X)$ is
\[\expdim\sigma_s(X)
\coloneq
\min\bigl\{
s(m+1)-1,\,
N
\bigr\}.
\]The variety $X$ is called \emph{$s$-defective} if
\[
\dim\sigma_s(X)
<
\expdim\sigma_s(X).
\]
In this case, the integer
\[\delta_s(X)
\coloneq
\expdim\sigma_s(X)-\dim\sigma_s(X)
\]is called the \emph{$s$-th defect} of $X$.
\end{definition}

\begin{definition}
\label{def: affine tangent space}
Let $\widehat X\subseteq W$ be the affine cone over $X$. For a smooth
point $P=[p]\in X$, with $p\in\widehat X\setminus\{0\}$,
let $T_p\widehat X$ denote the Zariski tangent space to
$\widehat X$ at $p$, and set $\widehat T_PX
\coloneq
T_p\widehat X
\subseteq
W$.
This linear subspace is independent of the chosen nonzero affine
representative of $P$.
\end{definition}

Throughout the paper, all tangent spaces appearing in formulas are affine tangent spaces unless explicitly stated otherwise. Thus, when no confusion can arise, we shall simply refer to $\widehat T_PX$ as the tangent space to $X$ at $P$. The following classical result will be used repeatedly; we refer to~\cite{bernardi_hitchhiker_2018} for further details.

\begin{theorem}[Terracini's Lemma]
\label{thm: terracini}
Let $P_1,\ldots,P_s\in X$ be general points and let $Q\in
\langle P_1,\ldots,P_s\rangle$ be a general point. Then
\[\widehat T_Q\sigma_s(X)
=
\widehat T_{P_1}X
+\cdots+
\widehat T_{P_s}X
\subseteq W.
\]\end{theorem}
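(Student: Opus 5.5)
The plan is the classical argument via the abstract join $\widehat X^{\times s}\to W$ and generic smoothness. Write $\widehat X\subseteq W$ for the affine cone and consider the addition morphism
\[
\Sigma:\widehat X\times\cdots\times\widehat X\longrightarrow W,
\qquad
(p_1,\ldots,p_s)\longmapsto p_1+\cdots+p_s .
\]
The image of $\Sigma$ is a constructible set whose closure is the affine cone $\widehat{\sigma_s(X)}$ over $\sigma_s(X)$. This holds because $\widehat X$ is stable under scaling: every point of $\langle P_1,\ldots,P_s\rangle$ has a representative $\sum t_ip_i=\sum (t_ip_i)$ with $t_ip_i\in\widehat X$. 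Since $\widehat X$ is irreducible, so is the source, and hence so is $\widehat{\sigma_s(X)}$. I will work with affine tangent spaces throughout, as in \Cref{def: affine tangent space}.

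First I compute the differential of $\Sigma$ at a point $(p_1,\ldots,p_s)$ with each $p_i$ a smooth point of $\widehat X$ (i.e.\ $P_i$ smooth on $X$, $p_i\neq0$). Because $\Sigma$ is the restriction of the linear map $W^{\oplus s}\to W$, $(w_i)\mapsto\sum w_i$, to the product, its differential sends $\bigoplus_i T_{p_i}\widehat X$ onto
\[
\widehat T_{P_1}X+\cdots+\widehat T_{P_s}X.
\]
Consequently, for every such tuple with $q=\Sigma(p_1,\ldots,p_s)$ a smooth point of $\widehat{\sigma_s(X)}$, the differential maps into the tangent space of the image. This gives the inclusion $\sum_i\widehat T_{P_i}X\subseteq\widehat T_Q\sigma_s(X)$.

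Next, for the reverse inclusion I invoke generic smoothness (characteristic zero, we are over $\C$). The dominant morphism $\Sigma$ from the irreducible variety $\widehat X^{\times s}$ onto $\widehat{\sigma_s(X)}$ has a nonempty open set $U$ of the source consisting of smooth points mapping to smooth points of the target, on which $d\Sigma$ is surjective onto the tangent space of the target. Hence for $(p_1,\ldots,p_s)\in U$ we get equality
\[
\widehat T_Q\sigma_s(X)=\operatorname{im} d\Sigma=\sum_i\widehat T_{P_i}X .
\]

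Finally I translate genericity. Choosing $P_1,\ldots,P_s$ general in $X$ and $Q$ general in their span amounts to choosing $(t_1p_1,\ldots,t_sp_s)$ general in $\widehat X^{\times s}$, with general scalars $t_i$. Rescaling does not change $\widehat T_{P_i}X$, since the tangent space to a cone is constant along rulings. So a general such choice lies in $U$, and the formula follows. I expect the main obstacle to be only this bookkeeping of genericity: ensuring that "general $Q$ in the span of general points" corresponds to a general point of the product, and that $Q$ is a smooth point of $\sigma_s(X)$. Both are handled by the density and openness of $U$ together with the surjectivity of the parametrization $(P_i,t_i)\mapsto\sum t_ip_i$.
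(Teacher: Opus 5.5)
Your proof is correct. The paper does not prove this lemma at all: it quotes it as classical and refers to the survey by Bernardi, Carlini, Catalisano, Gimigliano and Oneto, so there is no in-paper argument to compare against. Yours is the standard argument.

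The differential of the addition map $\Sigma$ on $\widehat X^{\times s}$ gives $\sum_i\widehat T_{P_i}X\subseteq\widehat T_Q\sigma_s(X)$ for every tuple. Generic smoothness in characteristic zero, applied on the smooth locus of the irreducible source, then gives equality at a general tuple.

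The genericity bookkeeping you flag can be made precise as follows. The image of your open set $U$ under $(p_1,\ldots,p_s)\mapsto([p_1],\ldots,[p_s],[\sum_ip_i])$ is a dense constructible subset of the abstract secant variety, so it contains a dense open subset there. The conclusion depends only on that image point, because affine tangent spaces are constant along the rulings of the cone.
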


Taking annihilators gives the form of Terracini's Lemma that will be used throughout the paper.

\begin{corollary}[Dual form of Terracini's Lemma]
\label{cor: dual terracini}
With the assumptions of \Cref{thm: terracini}, one has
\[\bigl(
\widehat T_Q\sigma_s(X)
\bigr)^\perp
=
\bigcap_{i=1}^s
\bigl(
\widehat T_{P_i}X
\bigr)^\perp
\subseteq
W^*.
\]Consequently, if $N=\dim\P(W)$, then
\[\dim\sigma_s(X)
=
N-
\dim
\left(
\bigcap_{i=1}^s
\bigl(
\widehat T_{P_i}X
\bigr)^\perp
\right).
\]\end{corollary}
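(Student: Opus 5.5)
The plan is to take annihilators in the equality of \Cref{thm: terracini} and then compute dimensions by rank--nullity. Set
\[
U_i\coloneqq \widehat T_{P_i}X\subseteq W .
\]
The one linear-algebra fact needed is that, for finitely many subspaces $U_1,\ldots,U_s$ of a finite-dimensional space $W$,
\[
\Bigl(\sum_i U_i\Bigr)^\perp=\bigcap_i U_i^\perp \subseteq W^* .
\]

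This fact follows in two inclusions.
\begin{itemize}
\item[(i)] A functional vanishing on $\sum_i U_i$ vanishes on each $U_i$, since each $U_i\subseteq\sum_j U_j$. Hence $\bigl(\sum_i U_i\bigr)^\perp\subseteq\bigcap_i U_i^\perp$.
\item[(ii)] A functional vanishing on every $U_i$ vanishes on every finite sum $u_1+\cdots+u_s$ with $u_i\in U_i$, by linearity. Hence $\bigcap_i U_i^\perp\subseteq\bigl(\sum_i U_i\bigr)^\perp$.
\end{itemize}
Applying this with the equality $\widehat T_Q\sigma_s(X)=\sum_i U_i$ from \Cref{thm: terracini} gives the first displayed identity of the corollary.

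For the dimension formula, first note that $\sigma_s(X)$ is irreducible. It is the closure of the image of the irreducible abstract secant variety, itself a closure inside $X^s\times\P(W)$. For general $P_i$ and general $Q\in\langle P_1,\ldots,P_s\rangle$, the point $Q$ is a general point of $\sigma_s(X)$, and hence a smooth point.

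At a smooth point $Q=[q]$, the affine tangent space $\widehat T_Q\sigma_s(X)$ is the tangent space to the affine cone at $q$. The cone has dimension $\dim\sigma_s(X)+1$, so
\[
\dim \widehat T_Q\sigma_s(X)=\dim\sigma_s(X)+1 .
\]
For any subspace $A\subseteq W$ one has $\dim A^\perp=\dim W-\dim A$. Since $\dim W=N+1$, this gives
\[
\dim\bigl(\widehat T_Q\sigma_s(X)\bigr)^\perp=N+1-\dim\sigma_s(X)-1=N-\dim\sigma_s(X) .
\]
Rearranging and substituting the intersection from the first part yields the stated formula.

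The only point needing care is the smoothness and generality of $Q$, which is what lets us equate the tangent-space dimension with $\dim\sigma_s(X)+1$. This is already built into the generality hypotheses of \Cref{thm: terracini}, so no real obstacle is expected.
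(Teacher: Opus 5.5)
Your proposal is correct and follows the paper's route. The paper obtains the corollary simply by taking annihilators in Terracini's Lemma. You fill in the two routine details it leaves implicit: the identity $\bigl(\sum_i U_i\bigr)^\perp=\bigcap_i U_i^\perp$, and the count $\dim\bigl(\widehat T_Q\sigma_s(X)\bigr)^\perp=N-\dim\sigma_s(X)$ at a general, hence smooth, point $Q$.
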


For the embedded flag variety $F_{\lambda,n}\subseteq\P(\S_\lambda V)$, \Cref{cor: dual terracini} reduces the computation of the dimensions of its secant varieties to the study of the annihilators $\bigl(\widehat T_PF_{\lambda,n}\bigr)^\perp \subseteq \S_\lambda V^*$ and of their intersections at general points. Our purpose is to describe these spaces intrinsically in the framework of Schur apolarity. As we shall see, the algebraic square of the Schur apolar ideal does not provide the correct answer in general.

\section{Tangential Schur Apolarity}
\label{sect: tangential Schur apolarity}

\subsection{The Schur algebra and Schur apolarity}
\label{subsect: Schur algebra and apolarity}

We recall the constructions from Schur apolarity, we refer to~\cite{StaffolaniThesis, Staffolani2023} for the complete theory and to~\cite{Fulton1997} for the classical Littlewood--Richardson rule. 

Set
\[
\S^\bullet V
\coloneq
\bigoplus_{\alpha}\S_\alpha V,
\qquad
\S^\bullet V^*
\coloneq
\bigoplus_{\alpha}\S_\alpha V^*,
\]
where the direct sums run over the partitions $\alpha$ with $\ell(\alpha)\leq n$. By the Littlewood--Richardson rule, for any two partitions $\alpha$ and $\beta$ one has
\[
\S_\alpha V\otimes\S_\beta V
\simeq
\bigoplus_{\gamma}
\left(\S_\gamma V\right)^{\oplus c_{\alpha,\beta}^{\gamma}},
\]
where $c_{\alpha,\beta}^{\gamma}$ is the corresponding Littlewood--Richardson coefficient. Following~\cite{Staffolani2023}, we fix coherent families of $G$-equivariant multiplication maps
\[
\mathcal N_{\gamma}^{\alpha,\beta}:
\S_\alpha V\otimes\S_\beta V
\longrightarrow
\S_\gamma V \qquad \text{and} \qquad \mathcal M_{\gamma}^{\alpha,\beta}:
\S_\alpha V^*\otimes\S_\beta V^*
\longrightarrow
\S_\gamma V^*.
\]
Their direct sums define products, both denoted by $\diamond$, on $\S^\bullet V$ and $\S^\bullet V^*$.

\begin{definition}
\label{def: Schur algebras}
The algebras $(\S^\bullet V,\diamond)$ and $(\S^\bullet V^*,\diamond)$ are called the \emph{Schur algebra} and the \emph{dual Schur algebra}, respectively. For a vector subspace $A\subseteq\S^\bullet V$, we set
\[
(A)_\alpha
\coloneq
A\cap\S_\alpha V
\]
and call it the \emph{$\alpha$-component} of $A$. We use the same notation for subspaces of $\S^\bullet V^*$.
\end{definition}

\begin{remark}
\label{rem: choice Schur multiplication}
If $c_{\alpha,\beta}^{\gamma}>1$, an equivariant map from $\S_\alpha V\otimes\S_\beta V$ to $\S_\gamma V$ is not unique up to scalar. All products appearing in this paper refer to the coherent choice fixed in~\cite{Staffolani2023}.
\end{remark}

We next recall the Schur apolarity action defined in \cite{Staffolani2023}. For a partition $\alpha$, set
\[
\superwedge^{\alpha'}V
\coloneq
\bigotimes_{c=1}^{\alpha_1}
\superwedge^{\alpha'_c}V,
\qquad
\superwedge^{\alpha'}V^*
\coloneq
\bigotimes_{c=1}^{\alpha_1}
\superwedge^{\alpha'_c}V^*.
\]
Let $\mu\subseteq\lambda$, and set $\mu'_c=0$ for $c>\mu_1$.
Applying the usual skew-symmetric contraction column by column gives a map
\[\widetilde{*}_{\mu,\lambda}:
\superwedge^{\mu'}V^*
\otimes
\superwedge^{\lambda'}V
\longrightarrow
\bigotimes_{c=1}^{\lambda_1}
\superwedge^{\lambda'_c-\mu'_c}V.
\]

\begin{definition}[Schur apolarity action, {\cite[Definition~3.3]{Staffolani2023}}]
\label{def: Schur apolarity}
Let $\lambda$ and $\mu$ be partitions with $\ell(\lambda),\ell(\mu)<n$. If $\mu\subseteq\lambda$, the \emph{Schur apolarity action}
\[
*:
\S_\mu V^*\otimes\S_\lambda V
\longrightarrow
\S_{\lambda/\mu}V
\]
is obtained by restricting the columnwise contraction displayed above to the standard column realizations of $\S_\mu V^*$ and $\S_\lambda V$. If $\mu\not\subseteq\lambda$, we set
\[
\varphi*p=0
\qquad
\text{for every }
\varphi\in\S_\mu V^*
\text{ and }
p\in\S_\lambda V.
\]
Here $\S_{\lambda/\mu}V$ denotes the skew Schur module associated with the skew diagram $\lambda/\mu$.
\end{definition}

\begin{remark}
\label{rem: classical specializations Schur apolarity}
The Schur apolarity action simultaneously extends the two classical constructions. If $\lambda=(d)$ and $\mu=(e)$, then $\S_\lambda V=\Sym^dV$, and $\S_\mu V^*=\Sym^eV^*$, i.e. \Cref{def: Schur apolarity} recovers the usual symmetric apolarity action (\cite{bernardi_hitchhiker_2018}). If $\lambda=(1^d)$, $\mu=(1^e)$, then it recovers the skew-symmetric apolarity action (\cite{ArrondoBernardiMarquesMourrain2021})
$\superwedge^eV^*
\otimes
\superwedge^dV
\longrightarrow
\superwedge^{d-e}V$.
\end{remark}

When $\mu=\lambda$, the skew diagram $\lambda/\mu$ is empty and $\S_{\lambda/\lambda}V\simeq\C$. Thus, Schur apolarity induces a pairing\[\S_\lambda V^*
\otimes
\S_\lambda V
\longrightarrow
\C,
\qquad
(\varphi,p)
\longmapsto
\varphi*p.\]
This pairing is nonzero and $G$-equivariant. By Schur's Lemma, it is perfect and is a nonzero scalar multiple of the evaluation pairing introduced in \eqref{eq: dual tableau bases}. In particular, the two pairings define the same annihilators.

\begin{example}[A columnwise Schur contraction]
\label{ex: columnwise Schur contraction}
Let $\lambda=(2,2,1)$ and $\mu=(2)$, so that $\lambda'=(3,2)$ and  $\mu'=(1,1)$.  In the usual column realizations, consider
\[
p
=
(e_1\wedge e_2\wedge e_3)
\otimes
(e_1\wedge e_2)
\in
\S_\lambda V,
\]
\[
\varphi
=
\alpha\otimes\alpha
\in
\S_\mu V^*
=
\Sym^2V^*,
\quad
\alpha\in V^*.
\]
Since $\mu\subseteq\lambda$, the Schur apolarity action $\varphi*p$ is obtained by applying the contractions
\[
V^*\otimes\superwedge^3V \longrightarrow \superwedge^2V \qquad \text{and} \qquad V^*\otimes\superwedge^2V \longrightarrow V
\]
to the two corresponding columns and then tensoring the resulting elements. More precisely,
\[
\alpha\lrcorner
(e_1\wedge e_2\wedge e_3)
=
\alpha(e_1)e_2\wedge e_3
-
\alpha(e_2)e_1\wedge e_3
+
\alpha(e_3)e_1\wedge e_2,
\]
\[
\alpha\lrcorner
(e_1\wedge e_2)
=
\alpha(e_1)e_2
-
\alpha(e_2)e_1.
\]
Therefore,
\[
\begin{aligned}
\varphi*p
={}&
\alpha(e_1)^2
e_2\wedge e_3\otimes e_2
-
\alpha(e_1)\alpha(e_2)
e_2\wedge e_3\otimes e_1
\\
&-
\alpha(e_1)\alpha(e_2)
e_1\wedge e_3\otimes e_2
+
\alpha(e_2)^2
e_1\wedge e_3\otimes e_1
\\
&+
\alpha(e_1)\alpha(e_3)
e_1\wedge e_2\otimes e_2
-
\alpha(e_2)\alpha(e_3)
e_1\wedge e_2\otimes e_1.
\end{aligned}
\]
This element belongs to $\S_{\lambda/\mu}V
\subseteq
\superwedge^2V\otimes V$.
Thus, in this example, the Schur apolarity action is obtained explicitly by performing one skew-symmetric contraction in each column of the chosen realizations.
\end{example}

For a vector subspace $U\subseteq\S_\lambda V$, we therefore consider the $*$-orthogonal space
\[
U^\perp
\coloneq
\left\{
\varphi\in\S_\lambda V^*
\ \middle|\
\varphi*u=0
\text{ for every }u\in U
\right\}.
\]

\begin{definition}[Schur apolar ideal, {\cite[Definition~3.9]{Staffolani2023}}]
\label{def: Schur apolar ideal}
Let $p\in\S_\lambda V$ be nonzero and let $P=[p]\in\P(\S_\lambda V)$. The \emph{Schur apolar ideal} of $P$ is
\[I_{\mathrm S}(P)
\coloneq
\left\{
\varphi\in\S^\bullet V^*
\ \middle|\
\varphi*p=0
\right\}.\]
This definition does not depend on the choice of the nonzero affine representative $p$ of $P$.
\end{definition}

The following fundamental property was proved by Staffolani.

\begin{proposition}[Ideal property, {\cite[Proposition~4.6 and
Remark~4.7]{Staffolani2023}}]
\label{prop: ideal property}
For every $P\in\P(\S_\lambda V)$, the Schur apolar ideal $I_{\mathrm S}(P) \subseteq \S^\bullet V^*$ is an ideal of the dual Schur algebra $(\S^\bullet V^*,\diamond)$.
\end{proposition}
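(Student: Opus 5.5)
The claim to prove is that $I_{\mathrm S}(P)$ is closed under addition and absorbs multiplication by arbitrary elements of $\S^\bullet V^*$. Closure under addition and scalars is immediate, because the action $*$ is linear in $\varphi$. The substance is absorption: if $\varphi\in\S_\mu V^*$ satisfies $\varphi*p=0$ and $\psi\in\S_\nu V^*$ is arbitrary, I must show that every Littlewood--Richardson component $\mathcal M_\gamma^{\mu,\nu}(\varphi\otimes\psi)$ kills $p$. Since $I_{\mathrm S}(P)$ is defined componentwise, I may assume $\varphi$ and $\psi$ are homogeneous.

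The plan is to establish a compatibility (associativity) formula between the product $\diamond$ and the action $*$:
\[
\mathcal M_\gamma^{\mu,\nu}(\varphi\otimes\psi)*p
=
\Phi\bigl(\psi\otimes(\varphi*p)\bigr),
\]
where $p\in\S_\lambda V$ and $\Phi$ is a suitable equivariant contraction $\S_\nu V^*\otimes\S_{\lambda/\mu}V\to\S_{\lambda/\gamma}V$. If $\gamma\not\subseteq\lambda$, the left side is zero by definition, so there is nothing to check. Otherwise, $\gamma\subseteq\lambda$ forces $\mu\subseteq\lambda$, because $c^\gamma_{\mu,\nu}\neq0$ implies $\mu\subseteq\gamma$. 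Once this formula is in place, $\varphi*p=0$ immediately gives $(\varphi\diamond\psi)*p=0$. Commutativity of $\diamond$ then handles the product in the other order.

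To prove the compatibility formula, I would lift everything to the column realizations $\superwedge^{\mu'}V^*$, $\superwedge^{\nu'}V^*$ and $\superwedge^{\lambda'}V$. There the action is columnwise contraction, and contractions by disjoint sets of covectors inside one column compose associatively: $(\alpha\wedge\beta)\lrcorner\omega=\beta\lrcorner(\alpha\lrcorner\omega)$ up to sign. Staffolani's multiplication $\mathcal M$ on tensors of columns is, by construction, a wedge-concatenation of columns followed by the Young symmetrizer and projection. So the key step is to show that the symmetrization and projection commute with the contraction, which follows from $G$-equivariance together with Schur's lemma applied to the relevant isotypic components.

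I expect the main obstacle to be exactly this: controlling the choice of multiplication maps when $c^\gamma_{\mu,\nu}>1$, together with the bookkeeping of skew Schur modules $\S_{\lambda/\mu}V$. The first thing I would try is to verify the formula on highest-weight vectors and on the spanning decomposable column tensors. I would then argue that the difference of the two sides is an equivariant map that vanishes on a spanning set, and hence vanishes identically. This is precisely the content of the coherence condition fixed in \cite{Staffolani2023}.
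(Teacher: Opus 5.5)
The paper gives no proof of this statement; it imports it from \cite[Proposition~4.6 and Remark~4.7]{Staffolani2023}. Your argument therefore has to stand on its own, and as written it does not.

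Your reductions are fine: you reduce to homogeneous components, and you note that $c^\gamma_{\mu,\nu}\neq0$ together with $\gamma\subseteq\lambda$ forces $\mu\subseteq\lambda$. Aiming to factor $\mathcal M^{\mu,\nu}_\gamma(\varphi\otimes\psi)*p$ through $\varphi*p$ is the right target. But that factorization is the entire content, and the mechanism you give for it is false: the Cartan projection does \emph{not} commute with columnwise contraction.

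Take $\lambda=(2,1,1)$, $\gamma=(2,1)$ and $p=(e_1\wedge e_2\wedge e_3)\otimes e_1\in\S_{(2,1,1)}V\subseteq\superwedge^3V\otimes V$. The tensor $(x_2\wedge x_3)\otimes x_1-(x_1\wedge x_3)\otimes x_2+(x_1\wedge x_2)\otimes x_3$ lies in the summand $\superwedge^3V^*$ of $\superwedge^2V^*\otimes V^*$. Hence every equivariant map $\superwedge^2V^*\otimes V^*\to\S_{(2,1)}V^*$ kills it. Yet its columnwise contraction with $p$ is $\pm e_1\neq0$. Here $\superwedge^{\lambda'-\gamma'}V=\S_{\lambda/\gamma}V=V$, so no projection on the output side can absorb the discrepancy. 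Consequently the tensor-level identity $(\alpha\wedge\beta)\lrcorner\omega=\pm\beta\lrcorner(\alpha\lrcorner\omega)$ does not transfer to $\diamond$.

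Your fallbacks do not close the gap either. Verifying on a spanning set presupposes a map $\Phi$ that you never define. Deferring to ``the coherence condition'' of \cite{Staffolani2023} is circular, because that compatibility is exactly the cited statement. You also use commutativity of $\diamond$, which the paper never asserts. When $c^\gamma_{\mu,\nu}>1$, commutativity is an extra condition on the chosen maps.

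The missing idea is that the factorization is indeed forced by representation theory, but through polynomiality rather than through any commutation.
\begin{itemize}
\item The $\S_\alpha V$-multiplicity in $\S_\mu V^*\otimes\S_\lambda V$ is $\dim\operatorname{Hom}_G(\S_\alpha V\otimes\S_\mu V,\S_\lambda V)=c^\lambda_{\alpha,\mu}$. So the polynomial part of the source is abstractly isomorphic to $\S_{\lambda/\mu}V$.
\item Grant the standard fact that $*$ maps onto $\S_{\lambda/\mu}V$. Then $*$ is injective on every polynomial isotypic component, so $\ker(*)$ has only non-polynomial constituents.
\item For any polynomial module $W$, this gives $\operatorname{Hom}_G(\S_\nu V^*\otimes\ker(*),W)\cong\operatorname{Hom}_G(\ker(*),\S_\nu V\otimes W)=0$.
\item Apply this with $W=\S_{\lambda/\gamma}V$ to the equivariant maps $\psi\otimes\varphi\otimes p\mapsto\mathcal M^{\mu,\nu}_\gamma(\varphi\otimes\psi)*p$ and $\psi\otimes\varphi\otimes p\mapsto\mathcal M^{\nu,\mu}_\gamma(\psi\otimes\varphi)*p$. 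Both vanish on $\S_\nu V^*\otimes\ker(*)$, which contains $\psi\otimes\varphi\otimes p$ whenever $\varphi*p=0$.
\end{itemize}
This yields a two-sided ideal for every equivariant choice of products, with no coherence or commutativity needed, and it completes your outline.

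Note that this argument genuinely uses $p\in\S_\lambda V$ rather than merely $p\in\superwedge^{\lambda'}V$. For $\tilde p=(e_1\wedge e_2\wedge e_3)\otimes e_4$, $\varphi=x_1\wedge x_4$ and $\psi=x_2$, the contraction of $\varphi$ with $\tilde p$ vanishes. The contraction of the Cartan projection of $\varphi\otimes\psi$ with $\tilde p$, however, is a nonzero multiple of $e_3$.
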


\begin{remark}
\label{rem: Staffolani decomposition side}
Staffolani also associates with a point $P\in F_{\lambda,n}$ an auxiliary ideal $I_{\mathrm R}(P)
\subseteq
I_{\mathrm S}(P)$ and proves a Schur Apolarity Lemma describing linear decompositions of elements of $\S_\lambda V$ in terms of inclusions of such ideals;
see~\cite[Section~4]{Staffolani2023}. Moreover, $\bigl(I_{\mathrm R}(P)\bigr)_\lambda
=
\bigl(I_{\mathrm S}(P)\bigr)_\lambda$. This develops the decomposition-theoretic side of Schur apolarity.
Our purpose is different: we seek the corresponding tangential construction, capable of describing the annihilator of the tangent space to $F_{\lambda,n}$ and hence, through Terracini's Lemma, the dimensions of its secant varieties.
\end{remark}

\subsection{Tangent and conormal spaces}
\label{subsect: tangent and conormal spaces}

Let $\rho_\lambda: G \rightarrow \GL(\S_\lambda V)$ be the representation associated with the Schur module, and let $d\rho_\lambda: \Lie{gl}(V) \rightarrow \operatorname{End}(\S_\lambda V)$ be its differential.
Since the center of $G$ acts on $\S_\lambda V$ by scalar multiplication, the punctured affine cone over $F_{\lambda,n}$ is the $G$-orbit of $\bv_{\lambda,n}$. Therefore,
\begin{equation}
\label{eq: tangent via action}
\widehat{T}_{P_{\lambda,n}}F_{\lambda,n}
=
d\rho_\lambda\bigl(\Lie{gl}(V)\bigr)
\cdot
\bv_{\lambda,n}
\subseteq
\S_\lambda V.
\end{equation}
We refer to the \emph{affine conormal space} of $F_{\lambda,n}$ at $P_{\lambda,n}$ as
\[
\left(
\widehat{T}_{P_{\lambda,n}}F_{\lambda,n}
\right)^\perp
\subseteq
\S_\lambda V^*.
\]
We compute these two spaces using the standard opposite big cell of the flag variety. We regard $\lambda$ as an $n$-tuple by setting $\lambda_h=0$ for $\ell(\lambda)<h\leq n$, and, for $i<j$, we denote by $E_{ji}\in\operatorname{End}(V)$ the linear map defined by $E_{ji}(e_i)=e_j$ and $E_{ji}(e_h)=0$ for $h\neq i$.
Set
\[\mathfrak u^-_\lambda
\coloneq
\left\{
A=
\sum_{\substack{i<j\\ \lambda_i>\lambda_j}}
a_{ji}E_{ji}
\right\}.
\]The map
\[
A
\longmapsto
\left[
\bigl(\operatorname{Id}_V+A\bigr)\cdot\bv_{\lambda,n}
\right]
\]
identifies $\mathfrak u^-_\lambda$ with the standard affine open neighborhood of $P_{\lambda,n}$ in $F_{\lambda,n}$. We use $A
\mapsto
\bigl(\operatorname{Id}_V+A\bigr)\cdot\bv_{\lambda,n}$ as its affine version. This is the usual opposite-big-cell parametrization of the highest-weight orbit; see, for instance, \cite{Fulton1997,FultonHarris1991}.

\begin{lemma}
\label{lem: minor vanishing order}
Let $r\in\{n_1,\ldots,n_k\}$ be a column height of $\lambda$ and let $J\subseteq[n]$ with $|J|=r$. Denote by $\Delta_J\bigl(\operatorname{Id}_V+A\bigr)$ the minor of $\operatorname{Id}_V+A$ with row set $J$ and column set $[r]$. Let $\mathfrak m = (a_{ji}) \subseteq \C[\mathfrak u^-_\lambda]$ be the maximal ideal of the origin. Then
\[\operatorname{ord}_0
\Delta_J\bigl(\operatorname{Id}_V+A\bigr)
=
|J\setminus[r]|.
\]Here $\operatorname{ord}_0$ denotes the $\mathfrak m$-adic order at the origin of $\mathfrak u^-_\lambda$.
\end{lemma}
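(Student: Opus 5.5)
We establish the two inequalities $\operatorname{ord}_0\Delta_J\geq|J\setminus[r]|$ and $\operatorname{ord}_0\Delta_J\leq|J\setminus[r]|$ separately. Write $M\coloneq\operatorname{Id}_V+A$ and let $M_{J,[r]}$ be the $r\times r$ submatrix with rows $J$ and columns $[r]$. Since $r$ is a column height of $\lambda$, we have $\lambda_r>\lambda_{r+1}$. Hence $\lambda_i>\lambda_j$ whenever $i\leq r<j$, and every entry $a_{ji}$ with $i\leq r<j$ is a free coordinate of $\mathfrak u^-_\lambda$. The entries with $i<j\leq r$ may or may not be free variables, and some may vanish identically. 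In every case, the entries of $M_{J,[r]}$ in rows $j\in J\setminus[r]$ are the variables $a_{ji}$, $i\in[r]$, all of which lie in $\mathfrak m$. The entries in rows $j\in J\cap[r]$ are $\delta_{ji}$ plus an element of $\mathfrak m$ for $i<j$, and are $\delta_{ji}$ exactly for $i\geq j$.

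\emph{Lower bound.} Expand $\det M_{J,[r]}$ by the Leibniz formula. Each term is a product containing exactly one entry from each row. The $t\coloneq|J\setminus[r]|$ rows indexed by $J\setminus[r]$ contribute factors in $\mathfrak m$, so every term lies in $\mathfrak m^{t}$. Therefore $\operatorname{ord}_0\Delta_J\geq t$.

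\emph{Upper bound.} It suffices to exhibit one monomial of degree exactly $t$ that occurs in $\Delta_J$ with nonzero coefficient. Work modulo $\mathfrak m^{t+1}$, equivalently specialize to lowest-order terms. In the rows $j\in J\cap[r]$, every entry of $\mathfrak m$ adds degree. So the degree-$t$ part of $\Delta_J$ equals the determinant of the matrix obtained by keeping the constant parts $\delta_{ji}$ in the rows of $J\cap[r]$ and the linear entries $a_{ji}$ in the rows of $J\setminus[r]$.

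That determinant can be computed by Laplace expansion along the rows $J\cap[r]$. Those rows form the identity restricted to the columns $J\cap[r]$. Up to sign, the result is the $t\times t$ determinant
\[
\det\bigl(a_{ji}\bigr)_{j\in J\setminus[r],\ i\in[r]\setminus J}.
\]
Its variables are distinct free coordinates of $\mathfrak u^-_\lambda$, because $i\leq r<j$. So it is a nonzero homogeneous polynomial of degree $t$: a generic determinant in independent variables, with the diagonal monomial appearing with coefficient $\pm1$.

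The delicate point is checking that no cancellation occurs between this leading contribution and other terms of the same degree. Every other Leibniz term uses at least one $\mathfrak m$-entry in a row of $J\cap[r]$, and so has degree at least $t+1$. This rules out cancellation, which gives $\operatorname{ord}_0\Delta_J=t$. We also use that $|[r]\setminus J|=|J\setminus[r]|=t$, so the minor above is square.

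For $t=0$ we have $J=[r]$, and $M_{[r],[r]}$ is unipotent lower triangular, so $\Delta_{[r]}=1$. This is consistent with the formula.
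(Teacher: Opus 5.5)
Your proof is correct and follows the same route as the paper. The Leibniz lower bound comes from the $|J\setminus[r]|$ rows whose entries lie in $\mathfrak m$, and the lowest-degree component is identified as $\pm\det(a_{ji})_{j\in J\setminus[r],\,i\in[r]\setminus J}$, which is nonzero because $\lambda_i\geq\lambda_r>\lambda_{r+1}\geq\lambda_j$ makes all its entries free coordinates. Your explicit check that every Leibniz term not fixing the rows of $J\cap[r]$ is either zero or of degree at least $t+1$ spells out a step the paper leaves implicit.
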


\begin{proof}
If $J=[r]$, then the corresponding principal submatrix of $\operatorname{Id}_V+A$ is lower unitriangular, and hence $\Delta_{[r]}
\bigl(\operatorname{Id}_V+A\bigr) = 1$.

Assume now that $J\neq[r]$ and set $q \coloneq |J\setminus[r]| = |[r]\setminus J|$.
Every term in the determinant contains an entry of $A$ for each row belonging to $J\setminus[r]$. Its degree is therefore at least $q$.  The homogeneous component of degree $q$ is, up to sign,
$\det
\left(
a_{ph}
\right)_{
p\in J\setminus[r],\,
h\in[r]\setminus J
}$. This is a nonzero polynomial in $\C[\mathfrak u^-_\lambda]$. Indeed, if $h\in[r]\setminus J$, $p\in J\setminus[r]$, then $h\leq r<p$. Since $r$ is a column height of $\lambda$, one has $\lambda_r>\lambda_{r+1}$, and hence $\lambda_h\geq\lambda_r >\lambda_{r+1} \geq \lambda_p$.
Thus, every $a_{ph}$ occurring in the determinant is one of the coordinates of $\mathfrak u^-_\lambda$. The determinant is therefore a nonzero polynomial of degree $q$, which proves the lemma.\end{proof}

The preceding calculation gives a geometric interpretation of the column distance introduced in \Cref{def: tableau data}.

\begin{proposition}[Column distance and vanishing order]
\label{prop: column distance vanishing order}
Let $S$ be a semistandard tableau of shape $\lambda$ with entries in $[n]$. Then the order of vanishing at $P_{\lambda,n}$ of the section $\bx_S$, restricted to $F_{\lambda,n}$, is
$\operatorname{ord}_{P_{\lambda,n}}(\bx_S)
=
\dist(S,S_0)$.
\end{proposition}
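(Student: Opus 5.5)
The plan is to compute $\bx_S$ explicitly on the opposite big cell $\mathfrak u^-_\lambda$ and then apply \Cref{lem: minor vanishing order} column by column. On this chart the section $\bx_S$ is represented by the polynomial function
\[
A\longmapsto \bigl\langle \bx_S,(\operatorname{Id}_V+A)\cdot\bv_{\lambda,n}\bigr\rangle_\lambda .
\]
This uses the affine lift $A\mapsto(\operatorname{Id}_V+A)\cdot\bv_{\lambda,n}$, which trivializes $\cO(1)$ on the chart. The trivialization is legitimate: the coordinate $\bx_{S_0}$ of this lift is nowhere zero on the chart (it equals $1$, as will follow from the computation below). Consequently $\operatorname{ord}_{P_{\lambda,n}}(\bx_S)$ is the $\mathfrak m$-adic order at $0$ of this polynomial.

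\textbf{Step 1: reduce to the Cartan realization.} Since $\bx_S=\pi_\lambda(\widetilde x_S)$ and $\pi_\lambda$ is dual to $\iota_\lambda$ (\Cref{prop: cartan realization}), we have $\langle \bx_S,w\rangle_\lambda=\langle\widetilde x_S,\iota_\lambda(w)\rangle$ for every $w\in\S_\lambda V$. This holds up to the fixed normalization constant, which does not affect orders. By $G$-equivariance of $\iota_\lambda$ and \Cref{rem: highest weight basis vector},
\[
\iota_\lambda\bigl((\operatorname{Id}_V+A)\cdot\bv_{\lambda,n}\bigr)=\bigotimes_{i=1}^k \bigl((\operatorname{Id}_V+A)e_{[n_i]}\bigr)^{\otimes d_i}
\]
up to a constant. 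Here $g\cdot e_{[r]}=ge_1\wedge\cdots\wedge ge_r$. Note that $\operatorname{Id}_V+A$ lies in $G$, being unipotent.

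\textbf{Step 2: identify the minors.} Pairing $x_J$ with $ge_1\wedge\cdots\wedge ge_r$ gives the minor $\Delta_J(g)$ with rows $J$ and columns $[r]$. Pairing a symmetric power $(u)^{\otimes d}$ with a product $\prod_c x_{J^{(c)}}$ gives $\prod_c\langle x_{J^{(c)}},u\rangle$, up to a nonzero combinatorial constant. It follows that
\[
\bx_S\bigl((\operatorname{Id}_V+A)\bv_{\lambda,n}\bigr)= c\prod_{c=1}^{\lambda_1}\Delta_{J_S^{(c)}}(\operatorname{Id}_V+A),\qquad c\neq 0.
\]
For $S=S_0$ every factor equals $1$, which confirms the nonvanishing used to trivialize the line bundle. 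Each column height $\lambda'_c$ lies in $\{n_1,\dots,n_k\}$, so \Cref{lem: minor vanishing order} applies to every factor.

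\textbf{Step 3: add up the orders.} The ring $\C[\mathfrak u^-_\lambda]$ is a polynomial ring, so the lowest-degree homogeneous components of the factors multiply to a nonzero form. Hence orders add, and
\[
\operatorname{ord}_0=\sum_c\bigl|J_S^{(c)}\setminus[\lambda'_c]\bigr|=\dist(S,S_0),
\]
since $J_{S_0}^{(c)}=[\lambda'_c]$.

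The main obstacle is Step 1–2: the bookkeeping that the dual-basis element $\bx_S$, as opposed to $\be_S$, really is the restriction of $\widetilde x_S$, so that no correction terms from other tableaux enter. I would handle this purely through the adjunction $\langle\pi_\lambda\varphi,w\rangle=\langle\varphi,\iota_\lambda w\rangle$ together with the equivariance of $\iota_\lambda$. This avoids any need to expand $\be_S$ in simple tensors.
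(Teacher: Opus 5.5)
Your proposal is correct and follows the paper's proof: both evaluate $\bx_S$ on the opposite big cell via $\langle \bx_S,(\operatorname{Id}_V+A)\cdot\bv_{\lambda,n}\rangle_\lambda$, use the duality between $\pi_\lambda$ and $\iota_\lambda$ together with $G$-equivariance to identify this function, up to a nonzero scalar, with $\prod_c\Delta_{J_S^{(c)}}(\operatorname{Id}_V+A)$, and then sum the orders given by \Cref{lem: minor vanishing order}. You also spell out two points the paper leaves implicit: that the chosen lift is a legitimate local trivialization, and that $\mathfrak m$-adic order is additive in the polynomial ring $\C[\mathfrak u^-_\lambda]$.
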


\begin{proof}
Write
$\ind(S)
=
\bigl(
J_S^{(1)},\ldots,J_S^{(\lambda_1)}
\bigr)$.
For $A\in\mathfrak u^-_\lambda$, set
\[f_S(A)
\coloneq
\left\langle
\bx_S,\,
\bigl(\operatorname{Id}_V+A\bigr)\cdot\bv_{\lambda,n}
\right\rangle_\lambda.
\]
Since $\iota_\lambda$ is $G$-equivariant and $\pi_\lambda$ is its dual, the function $f_S$ is, up to a nonzero scalar,
\[
\prod_{c=1}^{\lambda_1}
\Delta_{J_S^{(c)}}
\bigl(\operatorname{Id}_V+A\bigr).
\]
The scalar does not affect its order of vanishing. By \Cref{lem: minor vanishing order},
\[
\begin{aligned}
\operatorname{ord}_{P_{\lambda,n}}(\bx_S)
&=
\operatorname{ord}_0(f_S)
\\
&=
\sum_{c=1}^{\lambda_1}
\left|
J_S^{(c)}
\setminus
[\lambda'_c]
\right|
\\
&=
\dist(S,S_0),
\end{aligned}
\]
where the last equality follows from \Cref{def: tableau data}.
\end{proof}

We can now identify the tangent and conormal spaces in the tableau
bases introduced in \Cref{subsect: cartan realization}.

\begin{proposition}\label{prop: tangent and conormal bases}
With the notation above, one has
\begin{align}
\widehat{T}_{P_{\lambda,n}}F_{\lambda,n}
&=
\left\langle
\be_S
\ \middle|\
\dist(S,S_0)\leq1
\right\rangle,
\label{eq: tangent basis distance}
\\
\left(
\widehat{T}_{P_{\lambda,n}}F_{\lambda,n}
\right)^\perp
&=
\left\langle
\bx_S
\ \middle|\
\dist(S,S_0)\geq2
\right\rangle.
\label{eq: tangent apolar distance}
\end{align}
In particular, the sets $\left\{
\be_S
\ \middle|\
\dist(S,S_0)\leq1
\right\}$ and $\left\{
\bx_S
\ \middle|\
\dist(S,S_0)\geq2
\right\}$ are bases of the affine tangent space and of its conormal space, respectively.
\end{proposition}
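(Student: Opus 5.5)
The plan is to deduce both identities from \Cref{prop: column distance vanishing order}, using the duality of the tableau bases \eqref{eq: dual tableau bases}. The key observation is that a linear form $\varphi\in\S_\lambda V^*$ annihilates $\widehat{T}_{P_{\lambda,n}}F_{\lambda,n}$ if and only if the function $f_\varphi(A)\coloneq\langle\varphi,(\operatorname{Id}_V+A)\cdot\bv_{\lambda,n}\rangle_\lambda$ on the big cell $\mathfrak u^-_\lambda$ vanishes to order at least $2$ at the origin. This holds because the affine map $A\mapsto(\operatorname{Id}_V+A)\cdot\bv_{\lambda,n}$ parametrizes an open subset of the cone. Its value at $0$ is $\bv_{\lambda,n}$, and its differential at $0$ has image $d\rho_\lambda(\mathfrak u^-_\lambda)\cdot\bv_{\lambda,n}$. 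Together these span $\widehat{T}_{P_{\lambda,n}}F_{\lambda,n}$: by \eqref{eq: tangent via action}, the remaining part of $\Lie{gl}(V)$ (the parabolic stabilizing the line) only contributes multiples of $\bv_{\lambda,n}$. So $\varphi\in(\widehat T)^\perp$ iff $f_\varphi(0)=0$ and $df_\varphi(0)=0$, i.e.\ $\operatorname{ord}_0 f_\varphi\ge 2$.

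First I show the inclusion $\langle\bx_S\mid\dist(S,S_0)\ge2\rangle\subseteq(\widehat T)^\perp$. By \Cref{prop: column distance vanishing order}, each such $\bx_S$ has $\operatorname{ord}_0 f_S\ge2$, so the inclusion follows from the observation above.

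Next I show the reverse inclusion, and here lies the main obstacle. A linear combination of the $\bx_S$ with $\dist(S,S_0)\le1$ could a priori acquire higher vanishing order through cancellation. I handle this by showing that the forms $f_S$ with $\dist\le1$ have linearly independent $1$-jets. If $\dist(S,S_0)=0$ then $S=S_0$ and $f_{S_0}=1$. If $\dist(S,S_0)=1$, the proof of \Cref{lem: minor vanishing order} shows that the linear part of $f_S$ is, up to sign, the single coordinate $a_{ph}$. Here $p$ is the unique non-highest-weight entry, in column $c$, and $h=\lambda'_c$ is the missing index. The latter identification uses semistandardness: the increasing column $J^{(c)}_S$ must be $[\lambda'_c-1]\cup\{p\}$ once the other entries agree with the row indices, since the first $\lambda'_c-1$ entries are forced to be $1,\dots,\lambda'_c-1$ up to a shift which would increase the distance. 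The remaining columns contribute $1$. The point is then that distinct semistandard tableaux at distance $1$ give distinct coordinates $a_{ph}$. Semistandardness forces the modified column to be the rightmost column of its height (otherwise the row condition fails), so the pair $(p,h)$ determines $S$. Consequently, a combination $\varphi=\sum c_S\bx_S$ has $1$-jet $c_{S_0}+\sum_{\dist=1}\pm c_S a_{p_Sh_S}$ plus nothing from the terms with $\dist\ge2$. If $\operatorname{ord}_0f_\varphi\ge2$, all $c_S$ with $\dist\le1$ vanish, which gives \eqref{eq: tangent apolar distance}.

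Finally, \eqref{eq: tangent basis distance} follows by duality. The conormal space is spanned by the $\bx_S$ with $\dist\ge2$, so its annihilator in $\S_\lambda V$ is exactly $\langle\be_S\mid\dist(S,S_0)\le1\rangle$ by \eqref{eq: dual tableau bases}. Taking the double annihilator recovers $\widehat{T}_{P_{\lambda,n}}F_{\lambda,n}$, and the Schur pairing has the same annihilators as the evaluation pairing. As a consistency check, the count of distance-$1$ tableaux should equal $\dim F_{\lambda,n}$, and the injection $S\mapsto(p,h)$ with $\lambda_h>\lambda_p$ indeed lands in the coordinates of $\mathfrak u^-_\lambda$. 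Linear independence of the basis vectors follows from that of the tableau bases.
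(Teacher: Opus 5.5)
Your overall strategy is the paper's. You identify the conormal space with the kernel of the first-jet map on the big cell, and you kill the distance-$\geq2$ sections via \Cref{prop: column distance vanishing order}. Then you show that the $1$-jets of the distance-$\leq1$ sections are a constant together with nonzero multiples of distinct coordinates.

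\textbf{The gap is in the step you flagged as the main obstacle.} Your structural claim is false: you assert that a semistandard tableau at distance $1$ has modified column $J^{(c)}_S=[\lambda'_c-1]\cup\{p\}$, so that the missing index is always $h=\lambda'_c$. The column distance is a set difference, so a ``shift'' costs nothing.

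\emph{Counterexample.} Take $\lambda=(1,1)$ (or $(2,2)$) with $n=3$. The column $\{2,3\}$ gives a semistandard tableau at distance $1$ from $S_0$ whose missing index is $1$. The proof of \Cref{lem: minor vanishing order} gives its linear part as $\pm a_{31}$, not $\pm a_{32}$. Under your assignment, $\{1,3\}$ and $\{2,3\}$ are both sent to $(p,h)=(3,2)$. So the injectivity you use to force $c_S=0$ fails as stated, and the $1$-jets you write down would even be proportional.

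Your own consistency check detects this. Tableaux of the form you describe number only $\sum_m(n-n_m)$, whereas $\dim F_{\lambda,n}=\sum_m(n_m-n_{m-1})(n-n_m)$. The two counts differ as soon as some block of consecutive column heights has $n_m-n_{m-1}\geq2$. This happens, for instance, for every Grassmannian $\Gr(r,V)$ with $r\geq2$.

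\textbf{The repair is the argument the paper uses.} Let $h$ be the actual missing index, $[r]\setminus J^{(c)}_S=\{h\}$ with $r=\lambda'_c$.
\begin{enumerate}
\item The entry of the modified column in row $h$ is strictly larger than $h$.
\item Every other column is a highest-weight column, so it would have entry $h$ in row $h$. By the row condition, no column to the right of $c$ can reach row $h$, hence $c=\lambda_h$.
\item Consequently $(p,h)$ determines $S$: take column $\lambda_h$ and replace $h$ by $p$.
\item By \Cref{lem: minor vanishing order}, the linear part of $f_S$ is a nonzero multiple of $a_{ph}$.
\item This $a_{ph}$ is a genuine coordinate of $\mathfrak u^-_\lambda$, because $\lambda_h\geq\lambda_r>\lambda_{r+1}\geq\lambda_p$.
\end{enumerate}
With this correction, the distance-$\leq1$ jets are independent, and the rest of your argument goes through. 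That includes the justification that the parabolic part of $\Lie{gl}(V)$ only contributes multiples of $\bv_{\lambda,n}$, and the duality step giving \eqref{eq: tangent basis distance}.
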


\begin{proof}
For $\varphi\in\S_\lambda V^*$, define
$f_\varphi(A)
\coloneq
\left\langle
\varphi,\,
\bigl(\operatorname{Id}_V+A\bigr)\cdot\bv_{\lambda,n}
\right\rangle_\lambda$.
Its constant term is $f_\varphi(0)
=
\langle\varphi,\bv_{\lambda,n}\rangle_\lambda$, whereas its derivative in the direction $B\in\mathfrak u^-_\lambda$ is $d(f_\varphi)_0(B) = \left\langle
\varphi,\, B\cdot\bv_{\lambda,n} \right\rangle_\lambda$.
It follows from \eqref{eq: tangent via action} and the preceding big-cell parametrization that
\[
\varphi\in
\left(
\widehat T_{P_{\lambda,n}}F_{\lambda,n}
\right)^\perp
\quad\Longleftrightarrow\quad
f_\varphi\in\mathfrak m^2.
\]Equivalently, the conormal space is the kernel of the first-jet map
\[j^1_{P_{\lambda,n}}:
\S_\lambda V^*
\longrightarrow
\C[\mathfrak u^-_\lambda]/\mathfrak m^2,
\qquad
\varphi
\longmapsto
f_\varphi
\pmod{\mathfrak m^2}.
\]
We now identify the constant and linear terms of the tableau sections. There is exactly one tableau at distance zero from $S_0$, namely $S_0$ itself, and $f_{S_0}(A)=1$.

Suppose that $\dist(S,S_0)=1$. There is a unique column, say the $c$-th one, which is not a highest-weight column. If its height is $r$, then its content has the form $[r]\setminus\{i\}\cup\{j\}$ for $i\leq r<j$.
The row condition forces $c=\lambda_i$.
Indeed, the entry in row $i$ of the modified column is strictly larger than $i$. Hence no column lying to its right can reach row $i$, and the modified column must be the last column of row $i$. Moreover, since the modified column has height $r<j$, one has $\lambda_i>\lambda_j$.

Conversely, for every pair $i<j$, $\lambda_i>\lambda_j$, setting $c=\lambda_i$, $r=\lambda'_c$,  and replacing $i$ by $j$ in the $c$-th highest-weight column produces a unique semistandard tableau $S_{ji}$ satisfying $\dist(S_{ji},S_0)=1$. By \Cref{lem: minor vanishing order}, its local function has the form $
f_{S_{ji}}(A)
=
c_{ji}a_{ji}
\pmod{\mathfrak m^2}
$
for some $c_{ji}\in\C^*$. Therefore, the first jets of the sections $\bx_S$ with $\dist(S,S_0)\leq1$ are the class of $1$ together with nonzero multiples of the distinct coordinate classes $a_{ji}$. They form a basis of $\C[\mathfrak u^-_\lambda]/\mathfrak m^2$.

On the other hand, by \Cref{prop: column distance vanishing order}, every $\bx_S$ with $\dist(S,S_0)\geq2$ has zero first jet. Since the elements $\bx_S$ form a basis of $\S_\lambda V^*$, the kernel of this first-jet map is therefore
$
\left\langle
\bx_S
\ \middle|\
\dist(S,S_0)\geq2
\right\rangle$. Together with the preceding characterization of the conormal space, this proves \eqref{eq: tangent apolar distance}.

Finally, by \eqref{eq: dual tableau bases}, the annihilator of $\left\langle
\be_S
\ \middle|\
\dist(S,S_0)\leq1
\right\rangle$  is exactly $\left\langle \bx_S
\ \middle|\ \dist(S,S_0)\geq2 \right\rangle$.
By \eqref{eq: tangent apolar distance}, this is also the annihilator of $\widehat{T}_{P_{\lambda,n}}F_{\lambda,n}$.
Since the evaluation pairing is perfect, the two subspaces with the same annihilator coincide. This proves \eqref{eq: tangent basis distance}.
\end{proof}

\begin{remark}[Homogeneity]
\label{rem: tangent conormal homogeneity}
Let $P=g\cdot P_{\lambda,n}
\in F_{\lambda,n}$, $g\in G$.
Then
$
\widehat{T}_PF_{\lambda,n}
=
g\cdot
\widehat{T}_{P_{\lambda,n}}F_{\lambda,n}$,
and, with respect to the dual action of $G$ on $\S_\lambda V^*$,
$
\left(
\widehat{T}_PF_{\lambda,n}
\right)^\perp
=
g\cdot
\left(
\widehat{T}_{P_{\lambda,n}}F_{\lambda,n}
\right)^\perp$.
Thus, all intrinsic statements concerning tangent and conormal spaces may be verified at the highest-weight point.
\end{remark}

\begin{example}
\label{ex: tangent conormal two step three dimensional}
Let $V\simeq\C^3$ and $\lambda=(2,1)$.
Then $F_{(2,1),3}
\simeq
\Fl(1,2;V)
\subseteq
\P(\S_{(2,1)}V)$. The semistandard tableaux of shape $(2,1)$ with entries in $[3]$ are
\[
\begin{ytableau}
1 & 1\\
2
\end{ytableau},
\quad
\begin{ytableau}
1 & 1\\
3
\end{ytableau},
\quad
\begin{ytableau}
1 & 2\\
2
\end{ytableau},
\quad
\begin{ytableau}
1 & 2\\
3
\end{ytableau},
\quad
\begin{ytableau}
1 & 3\\
2
\end{ytableau},
\quad
\begin{ytableau}
1 & 3\\
3
\end{ytableau},
\quad
\begin{ytableau}
2 & 2\\
3
\end{ytableau},
\quad
\begin{ytableau}
2 & 3\\
3
\end{ytableau}.
\]
We denote them, in the displayed order, by $S_0,S_1,\ldots,S_7$. Their column distances from $S_0$ are, respectively, $0,1,1,2,1,2,2,2$. Therefore,
\[
\widehat{T}_{P_{(2,1),3}}F_{(2,1),3}
=
\left\langle
\be_{S_0},
\be_{S_1},
\be_{S_2},
\be_{S_4}
\right\rangle,
\]
\[
\left(
\widehat{T}_{P_{(2,1),3}}F_{(2,1),3}
\right)^\perp
=
\left\langle
\bx_{S_3},
\bx_{S_5},
\bx_{S_6},
\bx_{S_7}
\right\rangle.
\]
Thus,  $\left\{ \be_{S_0}, \be_{S_1},
\be_{S_2}, \be_{S_4} \right\}
$ is a basis of the affine tangent space, and $\left\{ \bx_{S_3}, \bx_{S_5}, \bx_{S_6},
\bx_{S_7} \right\} $ is a basis of its conormal space. In particular, $\dim \widehat{T}_{P_{(2,1),3}}F_{(2,1),3} = 4$, as expected from $\dim F_{(2,1),3}=3$.
\end{example}

\subsection{The Veronese model}
\label{subsect: Veronese model}

The classical symmetric case provides the model for the tangential construction developed in the following subsections. Let $d\geq1$. In the notation of \Cref{def: embedded flag variety}, the partition $\lambda=(d)$ gives $F_{(d),n} = \nu_d\bigl(\P(V)\bigr)
\subseteq \P(\Sym^dV)$, where $\nu_d$ is the $d$-th Veronese embedding of $\P(V)$.
Let
\[
p=[v]\in\P(V),
\qquad
P=\nu_d(p)=[v^d]\in F_{(d),n}.
\]
We denote by
\[
I_{p,\P(V)}
\subseteq
\Sym^\bullet V^*
\]
the homogeneous ideal of $p$.

\begin{proposition}[Lasker's Lemma]
\label{prop: Lasker lemma}
With the notation above, one has
\[\left(
\widehat T_PF_{(d),n}
\right)^\perp
=
\left(
I_{p,\P(V)}^2
\right)_d
\subseteq
\Sym^dV^*,\]
where the annihilator is taken with respect to the classical symmetric apolarity pairing.
\end{proposition}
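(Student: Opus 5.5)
By \Cref{rem: tangent conormal homogeneity}, it suffices to treat the highest-weight point. The reduction works because the ideal side is also equivariant: $I_{g\cdot p,\P(V)}=g\cdot I_{p,\P(V)}$, hence $(I_{g\cdot p}^2)_d=g\cdot(I_p^2)_d$. So we may assume $p=[e_1]$ and $P=[e_1^d]=P_{(d),n}$. Then $I_{p,\P(V)}=(x_2,\ldots,x_n)$, and $(I_{p,\P(V)}^2)_d$ is spanned by the monomials $x^a=x_1^{a_1}\cdots x_n^{a_n}$ of degree $d$ with $a_2+\cdots+a_n\geq2$, that is, with $a_1\leq d-2$.

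Next I translate this into the tableau language of \Cref{prop: tangent and conormal bases}. For $\lambda=(d)$ every column has height one, so a semistandard tableau $S$ is a weakly increasing row, equivalently a monomial $x^a$. The highest-weight tableau $S_0$ is the row $1\,1\cdots1$. The column distance is $\dist(S,S_0)=\#\{\text{boxes with entry}\neq1\}=d-a_1$. Under the Cartan realization of \Cref{prop: cartan realization}, with $k=1$, $n_1=1$, $d_1=d$, the map $\pi_\lambda$ is the identity of $\Sym^dV^*$. Hence $\bx_S=\widetilde{x}_S$ is the monomial $x^a$ itself.

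\Cref{prop: tangent and conormal bases} then gives
\[
\left(\widehat T_PF_{(d),n}\right)^\perp=\left\langle x^a\ \middle|\ d-a_1\geq2\right\rangle=\left(I_{p,\P(V)}^2\right)_d.
\]
Both sides are spanned by the same monomial basis, so they coincide.

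Two compatibility points remain. First, the annihilator in \Cref{prop: tangent and conormal bases} is computed with the evaluation pairing $\langle\ ,\ \rangle_\lambda$, whereas the statement uses the symmetric apolarity pairing. By the discussion after \Cref{rem: classical specializations Schur apolarity}, these pairings differ by a nonzero scalar (Schur's Lemma), so they have the same annihilators. Second, one might object that $\dist$ counts columns rather than powers of $x_j$. For one-row shapes each box is its own column, so the count is exactly the total degree in $x_2,\ldots,x_n$, and no issue arises.

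As an independent sanity check, I could instead argue directly. Via \eqref{eq: tangent via action}, $\widehat T_P=\langle e_1^{d-1}w\mid w\in V\rangle$. Also $f\in\Sym^dV^*$ kills $e_1^{d-1}w$ exactly when $\partial_wf(e_1)=0$, using $\langle f,v^{d-1}w\rangle\propto\partial_wf(v)$. So the annihilator consists of the $f$ singular at $p$, which are exactly the elements of $(I_p^2)_d$. I expect no real obstacle. The only step needing care is the identification $\bx_S=x^a$ with consistent normalization, and since only spans matter, scalars are harmless.
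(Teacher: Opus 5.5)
Your proof is correct. The paper itself gives no proof of \Cref{prop: Lasker lemma}: it quotes the statement as the classical result attributed to Lasker, cites \cite{Ottaviani2013} and \cite{BrambillaOttaviani2008}, and uses it only as the model that the Schur-apolar theory is meant to generalize.

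Your main route instead derives it inside the paper's framework, as the one-row case of \Cref{prop: tangent and conormal bases}. For $\lambda=(d)$ one has $k=1$, $n_1=1$, $d_1=d$, and the Cartan projection is a nonzero scalar multiple of the identity of $\Sym^dV^*$. Moreover, $\bx_S$ is (up to scalar) the monomial $x^{\cont(S)}$, and $\dist(S,S_0)=d-a_1$. So the conormal basis $\{\bx_S\mid\dist(S,S_0)\geq2\}$ is exactly the monomial basis of $\bigl((x_2,\ldots,x_n)^2\bigr)_d$. This is not circular, because \Cref{prop: tangent and conormal bases} is proved from the big-cell parametrization and \Cref{lem: minor vanishing order} without using Lasker. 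Your handling of the two pairings and of the equivariance of the ideal side is also correct. What this buys is that the Veronese identity of \Cref{subsect: Veronese model} becomes a genuine special case of the general tableau-distance description of conormal spaces, rather than an imported fact.

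Your ``sanity check'' is essentially the classical proof. It is shorter, needs none of the Schur machinery, and also covers the degenerate case $n=1$, which the block-form convention $n_k<n$ keeps out of \Cref{prop: tangent and conormal bases}. Its one unstated input is that degree-$d$ forms singular at $p$ are exactly $\bigl(I_{p,\P(V)}^2\bigr)_d$. This follows at once by writing $f=\sum_k x_1^{d-k}g_k(x_2,\ldots,x_n)$, with $g_k$ of degree $k$: the conditions $f(e_1)=0$ and $\partial_wf(e_1)=0$ for all $w$ force $g_0=g_1=0$.
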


This classical result is usually attributed to Lasker; see, for instance, \cite{Ottaviani2013}.

\begin{remark}
\label{rem: Veronese algebraic geometric squares}
The ideal $I_{p,\P(V)}$ is generated by linear forms. Consequently,  its algebraic square is the homogeneous ideal of the geometric double point supported at $p$:\[I_{p,\P(V)}^2
=
I_{p,\P(V)}^{\langle2\rangle}.
\]
Hence \Cref{prop: Lasker lemma} can equivalently be written as
\[\left(
\widehat T_PF_{(d),n}
\right)^\perp
=
\left(
I_{p,\P(V)}^{\langle2\rangle}
\right)_d.
\]
Thus, in the Veronese case, the same vector space admits three compatible interpretations:

\begin{itemize}
    \item it is the apolar annihilator of the affine tangent space;
    \item it is the degree-$d$ component of the algebraic square of the point ideal;
    \item it is the space of degree-$d$ hypersurfaces vanishing at  $p$ together with their first derivatives.
\end{itemize}
\end{remark}

The preceding description is also compatible with Schur apolarity.
Indeed, for every $0\leq e\leq d$, the Schur apolarity action on
\[
\S_{(e)}V^*\otimes\S_{(d)}V
=
\Sym^eV^*\otimes\Sym^dV
\]
is the classical symmetric apolarity action. Therefore,
\[\left(
I_{\mathrm S}(P)
\right)_{(e)}
=
\left(
I_{p,\P(V)}
\right)_e
\qquad
\text{for every }0\leq e\leq d.
\]Moreover, only one-row partitions can contribute to the $(d)$-component of a product whose target is $\S_{(d)}V^*$. It follows that
\[\left(
I_{\mathrm S}(P)^2
\right)_{(d)}
=
\left(
I_{p,\P(V)}^2
\right)_d.
\]Combining \Cref{prop: Lasker lemma} with the preceding equality, we obtain
\begin{equation}
\label{eq: Veronese triple identification}
\left(
\widehat T_PF_{(d),n}
\right)^\perp
=
\left(
I_{\mathrm S}(P)^2
\right)_{(d)}
=
\left(
I_{p,\P(V)}^{\langle2\rangle}
\right)_d.
\end{equation}

\begin{corollary}[Dual Terracini for Veronese varieties]
\label{cor: dual Terracini Veronese}
Let $P_i=\nu_d(p_i)\in F_{(d),n}$, $i=1,\ldots,s$, be general points, and let $Q\in
\langle P_1,\ldots,P_s\rangle$ be general. Then
\begin{align*}
\left(
\widehat T_Q\sigma_s(F_{(d),n})
\right)^\perp
&=
\bigcap_{i=1}^s
\left(
I_{p_i,\P(V)}^{\langle2\rangle}
\right)_d\\
&=
\bigcap_{i=1}^s
\left(
I_{\mathrm S}(P_i)^2
\right)_{(d)}.
\end{align*}
\end{corollary}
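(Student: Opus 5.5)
This corollary is a direct combination of the dual form of Terracini's Lemma with the Veronese triple identification \eqref{eq: Veronese triple identification}. I would first apply \Cref{cor: dual terracini} to the irreducible nondegenerate variety $X=F_{(d),n}=\nu_d(\P(V))\subseteq\P(\Sym^dV)$. For general points $P_1,\ldots,P_s\in X$ and a general $Q\in\langle P_1,\ldots,P_s\rangle$, this gives
\[
\bigl(\widehat T_Q\sigma_s(F_{(d),n})\bigr)^\perp=\bigcap_{i=1}^s\bigl(\widehat T_{P_i}F_{(d),n}\bigr)^\perp .
\]
Here the annihilator is taken with respect to the evaluation pairing. As noted after \Cref{ex: columnwise Schur contraction}, the Schur apolarity pairing (equivalently, for $\lambda=(d)$, the classical symmetric apolarity pairing) is a nonzero scalar multiple of the evaluation pairing. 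So all three pairings define the same annihilators, and I may freely use whichever one matches the cited identifications.

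Next, I would rewrite each term of the intersection pointwise. Each $P_i$ is of the form $\nu_d(p_i)$ with $p_i=[v_i]$. By Lasker's Lemma (\Cref{prop: Lasker lemma}) together with \Cref{rem: Veronese algebraic geometric squares}, we have $(\widehat T_{P_i}F_{(d),n})^\perp=(I_{p_i,\P(V)}^{\langle2\rangle})_d$. This yields the first equality after intersecting over $i$.

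For the second equality, I would use the identification $(I_{\mathrm S}(P_i)^2)_{(d)}=(I_{p_i,\P(V)}^2)_d$ established just before \eqref{eq: Veronese triple identification}. That identification rests on two facts: the Schur apolar ideal agrees with the point ideal in all one-row degrees $e\le d$, and only one-row partitions contribute to products landing in $\S_{(d)}V^*$. Since \eqref{eq: Veronese triple identification} is stated for an arbitrary point $P=\nu_d(p)$, it applies to each $P_i$, and intersecting gives the result.

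No step is a genuine obstacle. The only points to check are that the genericity hypotheses of \Cref{thm: terracini} are exactly those in the statement, which they are, and that the annihilator conventions agree, which is settled by the scalar-multiple remark on the pairings.
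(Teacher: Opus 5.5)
Your proposal is correct and follows the paper's proof. The paper likewise combines the dual form of Terracini's Lemma with Lasker's Lemma (including the remark identifying $I_{p,\P(V)}^2$ with $I_{p,\P(V)}^{\langle2\rangle}$) and applies the Veronese triple identification at each $P_i$; your extra check that the pairings define the same annihilators is a harmless clarification, though the paper records that fact just before the columnwise contraction example, not after it.
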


\begin{proof}
The statement follows from
\Cref{cor: dual terracini} and \Cref{prop: Lasker lemma} and from
\eqref{eq: Veronese triple identification}.
\end{proof}

As a consequence, the computation of the dimensions of secant varieties of Veronese varieties becomes a polynomial interpolation problem for hypersurfaces singular at general points. From the Schur-apolar point of view, the crucial feature is the equality
\[
\left(
I_{\mathrm S}(P)^2
\right)_{(d)}
=
\left(
\widehat T_PF_{(d),n}
\right)^\perp.
\]
This naturally raises the question whether $\left( \widehat T_PF_{\lambda,n} \right)^\perp \stackrel{?}{=} \left( I_{\mathrm S}(P)^2 \right)_\lambda$ for an arbitrary partition $\lambda$. The next subsection shows that, unlike in the Veronese case, this identity fails for general Schur modules.

\subsection{Failure of the algebraic Schur square}
\label{subsect: failure algebraic Schur square}

The Veronese model suggests that, for a point $P\in F_{\lambda,n}$, the conormal space to the embedded flag variety might be recovered as the $\lambda$-component of the algebraic square of its Schur apolar ideal:
\begin{equation}
\label{eq: naive algebraic Schur square}
\left(
\widehat T_PF_{\lambda,n}
\right)^\perp
\stackrel{?}{=}
\left(
I_{\mathrm S}(P)^2
\right)_\lambda.
\end{equation}
Here the square is taken with respect to the multiplication $\diamond$ in the dual Schur algebra. Unlike in the Veronese case, the equality in \eqref{eq: naive algebraic Schur square} does not hold for arbitrary Schur modules.

\begin{proposition}[Failure of the algebraic Schur square]
\label{prop: algebraic Schur square fails}
Let $V\simeq\C^3$ and $\lambda=(4,2)$. Then $F_{(4,2),3}
\simeq
\Fl(1,2;V)
\subseteq
\P(\S_{(4,2)}V)$ is the embedding induced by $\cO(2,2)$. Let
\[
S_0
=
\begin{ytableau}
1 & 1 & 1 & 1\\
2 & 2
\end{ytableau}
\qquad\text{and}\qquad
S
=
\begin{ytableau}
1 & 1 & 2 & 2\\
2 & 2
\end{ytableau}.
\]
Equivalently, $ \ind(S) = \bigl( \{1,2\}, \{1,2\}, \{2\}, \{2\} \bigr)$. Then
\[\bx_S
\in
\left(
\widehat T_{P_{(4,2),3}}F_{(4,2),3}
\right)^\perp
\setminus
\left(
I_{\mathrm S}(P_{(4,2),3})^2
\right)_{(4,2)}.
\]
Therefore,
\[
\left(
\widehat T_{P_{(4,2),3}}F_{(4,2),3}
\right)^\perp
\neq
\left(
I_{\mathrm S}(P_{(4,2),3})^2
\right)_{(4,2)}.
\]
\end{proposition}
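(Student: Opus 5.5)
The first step is to place $\bx_S$ in the conormal space; this is immediate from \Cref{prop: tangent and conormal bases}. The tableau $S$ is semistandard: its rows $1,1,2,2$ and $2,2$ are weakly increasing and its columns are strictly increasing. Comparing $\ind(S)=(\{1,2\},\{1,2\},\{2\},\{2\})$ with $\ind(S_0)=(\{1,2\},\{1,2\},\{1\},\{1\})$ gives $\dist(S,S_0)=2$. Hence $\bx_S\in\bigl(\widehat T_{P_{(4,2),3}}F_{(4,2),3}\bigr)^\perp$ by \eqref{eq: tangent apolar distance}.

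The real content is to show $\bx_S\notin\bigl(I_{\mathrm S}(P_{(4,2),3})^2\bigr)_{(4,2)}$. I would reduce this by torus weights. The multiplications $\mathcal M_\gamma^{\alpha,\beta}$ are $G$-equivariant, and $I_{\mathrm S}(P_{(4,2),3})$ is stable under the diagonal torus because $P_{(4,2),3}$ is a weight vector. So $I_{\mathrm S}(P)$ is spanned by weight vectors, and the weight-$\cont(S)$ part of $(I_{\mathrm S}(P)^2)_{(4,2)}$ is spanned by products $\varphi\diamond\psi$ of weight vectors. These satisfy $\varphi\in(I_{\mathrm S}(P))_\alpha$, $\psi\in(I_{\mathrm S}(P))_\beta$, $c^{(4,2)}_{\alpha,\beta}\neq0$ (so $\alpha,\beta\subseteq(4,2)$, both nonempty), and $\mathrm{wt}(\varphi)+\mathrm{wt}(\psi)=\cont(S)=(2,4,0)$. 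In particular only the letters $1,2$ occur, so every factor lives in the span of the tableaux $\bx_T$ with entries in $\{1,2\}$. The question therefore becomes a finite $\GL_2$-type computation.

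Next I would describe $(I_{\mathrm S}(P))_\alpha$ in these weight spaces. Contracting $\bx_T$ column by column against $\bv_{\lambda,n}=e_{[2]}^{\otimes2}\otimes e_1^{\otimes2}$, the term coming from a column of $\bx_T$ can survive only if the column entries are contained in the corresponding highest-weight column of $\lambda$. So, roughly, $\varphi\in(I_{\mathrm S}(P))_\alpha$ exactly when $\varphi$ has no component along the tableau filling $\alpha$ compatibly with $S_0$; in weight terms this means $\varphi$ uses fewer $1$'s than $\alpha$ allows, a ``deficit'' of at least one $1$. Since $\bx_S$ has deficit exactly two relative to $\cont(S_0)=(4,2,0)$, and each factor must carry deficit at least one, both factors carry deficit exactly one. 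The admissible pairs $(\alpha,\beta)$ with $\alpha,\beta\subseteq(4,2)$ and $c^{(4,2)}_{\alpha,\beta}\neq0$, with at most two rows, form a short list. For each pair, the deficit-one ideal elements of the right weight are one- or two-dimensional.

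For each admissible pair I would compute $\varphi\diamond\psi$ explicitly using Staffolani's coherent multiplication. This means concatenating columns in the Cartan realization, projecting with $\pi_{(4,2)}$, and straightening into the semistandard basis of weight $(2,4,0)$. Besides $S$, the only other semistandard tableau of that weight is $\begin{smallmatrix}1&2&2&2\\2&1\end{smallmatrix}$, which is not valid, so $\bx_S$ is the unique basis vector of that weight. The goal is then to show that every such product vanishes, which gives a contradiction. The cleanest route would be an invariant argument: in the $\GL_2$ reduction $\S_{(4,2)}\C^2\simeq\det^2\otimes\Sym^2\C^2$, the weight $(2,4)$ is the lowest weight, and products of deficit-one factors should land in a proper submodule or vanish by a $\mathfrak{sl}_2$-weight count. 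The main obstacle is exactly this step: carrying out, or cleanly circumventing, the explicit straightening for every pair $(\alpha,\beta)$ with the specific coherent choice of multiplications. I would first try the $\GL_2$ reduction to avoid case-by-case straightening, and fall back on direct computation if needed.
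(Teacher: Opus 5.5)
Your first step, the placement of $\bx_S$ in the conormal space, is correct, and so is the torus-weight reduction. Both agree with the paper. The gap is in your description of $(I_{\mathrm S}(P_{(4,2),3}))_\alpha$, on which the rest of the plan rests.

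\textbf{The ideal is misdescribed.} For $\alpha\subseteq(4,2)$, the columns of $\alpha$ are contracted against the first $\alpha_1$ columns of $\bv_{(4,2),3}\sim e_{[2]}\otimes e_{[2]}\otimes e_1\otimes e_1$. For every small shape that occurs, these include the height-two columns $e_{[2]}$, which contain both $e_1$ and $e_2$. So having no component along the restriction of $S_0$ does not put $\varphi$ in the ideal. For instance, $x_2*\bv_{(4,2),3}$ is a nonzero multiple of $e_1\otimes e_{[2]}\otimes e_1\otimes e_1$. Likewise $x_1x_2$, $x_2^2$, $x_1\wedge x_2$, both $\{1,2\}$-tableau vectors of shape $(2,1)$, $x_1^2x_2$ and $x_1x_2^2$ all act nontrivially. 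On weights supported on $\{1,2\}$, the ideal is in fact zero in shapes $(1),(2),(1,1),(2,1)$. In shape $(3)$ it equals $\C x_2^3$, which is killed only because the third column of $\bv_{(4,2),3}$ is $e_1$. Your deficit bookkeeping is also unjustified. Shape $\alpha$ allows $\alpha_1$ ones, and $\alpha_1+\beta_1\neq 4$ for several Littlewood--Richardson pairs, e.g.\ $((1),(4,1))$ and $((3),(3))$. So ``both factors have deficit exactly one'' does not follow.

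\textbf{The vanishing step is false in your framework, not just unfinished.} Take $\alpha=(1)$, $\beta=(3,2)$, $u=x_2$, and $v=\bx_T$, where $T$ is the semistandard tableau with rows $1,1,2$ and $2,2$. Both have deficit one in your sense. Moreover, $v$ genuinely lies in the ideal, since its contraction with $\bv_{(4,2),3}$ would have weight $(2,-1,0)$. The weight-$(2,4,0)$ space of $V^*\otimes\S_{(3,2)}V^*$ is spanned by $x_2\otimes\bx_T$. The map $\mathcal M^{(1),(3,2)}_{(4,2)}$ is nonzero and equivariant, hence onto the irreducible $\S_{(4,2)}V^*$, so $x_2\diamond\bx_T$ is a nonzero multiple of $\bx_S$. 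For the same reason no $\mathfrak{sl}_2$ weight count can force vanishing: the target is irreducible and the weight $(2,4)$ occurs in it.

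\textbf{What actually works.} $\bx_S$ stays out of the square because admissible factors do not exist in the ideal, not because products vanish. The paper first uses $\S_{(4,2)}V^*[(2,4,0)]=\C\bx_S$ to reduce to a single product $u\diamond v$ of weight vectors. It then tests the smaller factor of each Littlewood--Richardson pair directly against $\bv_{(4,2),3}$, as above. This rules out every pair except $((3),(3))$. In that case $u,v\in\C x_2^3$, whose weights sum to $(0,6,0)\neq(2,4,0)$. No multiplication ever has to be computed.
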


\begin{proof}
By construction, $\dist(S,S_0)=2$.
Therefore,
\Cref{prop: tangent and conormal bases} gives $\bx_S
\in
\left(
\widehat T_{P_{(4,2),3}}F_{(4,2),3}
\right)^\perp$.
The non-inclusion $\bx_S \notin \left( I_{\mathrm S}(P_{(4,2),3})^2 \right)_{(4,2)}$ is proved in \Cref{lem: schur square failure 42}.
\end{proof}

For the proof of the non-inclusion, we introduce the following standard weight-space notation, which will also be used in the slot-by-slot construction in the sequel.

\begin{definition}[Content spaces]
\label{def: content spaces}
Let $\mathbb T\subseteq G$ be the diagonal torus and let $M$ be a finite-dimensional $\mathbb T$-stable subspace of a dual Schur module or of one of the dual tensor spaces considered below. For $\nu=(\nu_1,\ldots,\nu_n)\in\N^n$, we define the \emph{content-$\nu$ subspace} of $M$ by
\[
M[\nu]
\coloneq
\left\{
m\in M
\ \middle|\
t\cdot m
=
t_1^{-\nu_1}\cdots t_n^{-\nu_n}m
\text{ for every }
t=\operatorname{diag}(t_1,\ldots,t_n)\in\mathbb T
\right\}.
\]
\end{definition}

Since $\mathbb T$ is diagonalizable, every finite-dimensional $\mathbb T$-stable space decomposes as
\[
M
=
\bigoplus_{\nu\,:\,M[\nu]\neq0}
M[\nu].
\]
Moreover, every $\mathbb T$-equivariant linear map preserves the content spaces. In particular, all $G$-equivariant Schur multiplication maps preserve content.

\begin{lemma}[The factorization obstruction in type $(4,2)$]
\label{lem: schur square failure 42}
Let $V\simeq\C^3$ and  $\lambda=(4,2)$, and let $S$ be the semistandard tableau with column index sequence $\ind(S)
=
\bigl(
\{1,2\},
\{1,2\},
\{2\},
\{2\}
\bigr)$. Then
\[
\bx_S
\notin
\left(
I_{\mathrm S}(P_{(4,2),3})^2
\right)_{(4,2)}.
\]
\end{lemma}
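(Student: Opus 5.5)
The plan is to show that the content-$\nu$ subspace of $\bigl(I_{\mathrm S}(P)^2\bigr)_{(4,2)}$, with $\nu=\cont(S)=(2,4,0)$, is zero; here $P=P_{(4,2),3}$ and $p=\bv_{(4,2),3}=\be_{S_0}$. Since $\bx_S$ is a nonzero weight vector of content $\nu$, this gives the claim. By definition, $\bigl(I_{\mathrm S}(P)^2\bigr)_{(4,2)}$ is spanned by the products $\mathcal M^{\alpha,\beta}_{(4,2)}(\varphi\otimes\psi)$ with $\varphi\in(I_{\mathrm S}(P))_\alpha$, $\psi\in(I_{\mathrm S}(P))_\beta$ and $c^{(4,2)}_{\alpha,\beta}\neq0$. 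The multiplication maps are $\mathbb T$-equivariant, and each graded piece $(I_{\mathrm S}(P))_\alpha$ is $\mathbb T$-stable, because $p$ is a weight vector and the action $*$ is equivariant. We may therefore assume that $\varphi$ and $\psi$ are weight vectors with contents $\nu^{(1)}+\nu^{(2)}=(2,4,0)$. In particular, neither factor involves $x_3$, so both lie in the polynomial representations $\S_\alpha V^*$ and $\S_\beta V^*$ restricted to $\langle x_1,x_2\rangle$. This forces $\ell(\alpha),\ell(\beta)\le2$, and a nonzero weight $(a,b,0)$ in $\S_\alpha$ requires $a\le\alpha_1$.

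Next I determine $(I_{\mathrm S}(P))_\alpha$ in the content-$(a,b,0)$ range. Let $\alpha\subseteq(4,2)$ be nonempty, with $\alpha=(\alpha_1,\alpha_2)$. The action $\bx_R*p$ is a columnwise contraction of $e_{12}^{\otimes2}\otimes e_1^{\otimes2}$, suitably symmetrized. A weight vector of content $(a,b,0)$ fails to annihilate $p$ only if its weight is compatible with $p$ after contraction. Since $e_2$ occurs only in the height-two columns, exactly $d_2$-many times, the contraction is nonzero precisely when at most two of the contracting columns involve $x_2$, and only those of height two may. I expect this to show that $(I_{\mathrm S}(P))_\alpha[(a,b,0)]$ equals the whole weight space $\S_\alpha V^*[(a,b,0)]$ exactly when $b>\alpha_2$. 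The reason is that too many $2$'s must then sit in row one, where $p$'s one-box columns carry only $e_1$; otherwise that weight space is zero or one-dimensional and not apolar. I would check this by hand on the few shapes $\alpha\subseteq(4,2)$ for which $c^{(4,2)}_{\alpha,\beta}\neq0$. If $\alpha\not\subseteq(4,2)$, then $\alpha$ cannot contribute to a product landing in $\S_{(4,2)}$ anyway, since LR forces $\alpha,\beta\subseteq(4,2)$.

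Finally, I combine the two conditions. Both factors must lie in the ideal, so $b_1>\alpha_2$ and $b_2>\beta_2$. Then $4=b_1+b_2\ge\alpha_2+\beta_2+2$, while $a_1+a_2=2$ with $a_i\le\alpha_i$. The LR condition gives $|\alpha|+|\beta|=6$ and $\alpha_2+\beta_2\ge 2$, because the second row of $(4,2)$ has length $2$ and must be filled from the second rows of the factors or from LR boxes placed in row two. In fact $\alpha_2+\beta_2\le 2$ holds as well, with equality forcing $b_1+b_2\ge4$ and hence the saturation $b_i=\alpha_i$-type equalities. I would enumerate the remaining cases directly; there are very few, given $|\alpha|,|\beta|\ge1$ and shapes inside $(4,2)$. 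In each case I would verify either that the content constraints are incompatible or that the product lands in $\S_{(4,2)}V^*[\nu]$ with zero image. The weight space $\S_{(4,2)}V^*[(2,4,0)]$ is one-dimensional, spanned by $\bx_S$, so it is enough to test each product against $\be_S$.

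The main obstacle is the last step. The vanishing of the relevant products depends on Staffolani's coherent choice of $\mathcal M$, and on factors with $\alpha_2=0$ (one-row factors with all $x_2$). I would handle this by pairing against $\be_S$ and using the compatibility of $\diamond$ with the action, $(\varphi\diamond\psi)*p$. The plan is to show that the products are apolar to $\be_S$ via the ideal property (\Cref{prop: ideal property}) applied at the point $[\be_S]$. Failing that, I would compute the finitely many products explicitly in the Cartan realization.
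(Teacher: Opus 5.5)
Your reduction to weight vectors and your restriction to shapes $\alpha,\beta\subseteq(4,2)$ with at most two rows match the paper. However, the membership criterion on which the rest of the plan depends is false. You claim that $(I_{\mathrm S}(P))_\alpha[(a,b,0)]$ is the whole weight space exactly when $b>\alpha_2$. This already fails for $\alpha=(1)$: since $x_2\mathbin{\lrcorner}e_{[2]}=-e_1$, one has $x_2*p\neq0$. Likewise $x_2^2*p=e_1\otimes e_1\otimes e_1\otimes e_1\neq0$ and $x_1^2x_2*p\neq0$.

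The error lies in your justification. A box of $\alpha$ in row one but in column $1$ or $2$ is contracted against a height-two column $e_{[2]}$ of $p$, not against a one-box column $e_1$, and there $x_2$ contracts nontrivially. The weight vector is $x_{[2]}$ in each height-two column, tensored with a symmetrization of $x_1^{a-\alpha_2}x_2^{b-\alpha_2}$ in the one-box columns. All non-vanishing placements carry the same sign, so working this out gives the correct criterion: the vector annihilates $p$ if and only if $b>\min\{\alpha_1,2\}$. Your auxiliary Littlewood--Richardson claim $\alpha_2+\beta_2\geq2$ is also false, since $\S_{(4,2)}V^*$ occurs in both $\S_{(2)}V^*\otimes\S_{(4)}V^*$ and $\S_{(3)}V^*\otimes\S_{(3)}V^*$.

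These errors are not cosmetic, because your final step tries to dispose of the surviving pairs by showing that the products vanish, and that mechanism fails. Under your criterion, $u=x_2^2\in\S_{(2)}V^*$ and $v=x_1^2x_2^2\in\S_{(4)}V^*$ would both lie in the ideal, with contents summing to $(2,4,0)$. But $\cM^{(2),(4)}_{(4,2)}$ is unique up to scalar, and on forms in $x_1,x_2$ it is, up to a nonzero scalar, the second transvectant. It sends $x_2^2\otimes x_1^2x_2^2$ to a nonzero multiple of $\bx_S$. So the plan as written would ``prove'' the opposite of the lemma. The fallback via the ideal property at $[\be_S]$ does not help either, since $\varphi\in I_{\mathrm S}(P)$ says nothing about $\varphi*\be_S$.

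What actually happens, and what the paper checks case by case, is the following. Every LR pair other than $((3),(3))$ has a factor of shape $(1)$, $(2)$, $(1,1)$ or $(2,1)$. For these shapes $b\le\min\{\alpha_1,2\}$ always holds, so their ideal component vanishes in every content supported on $\{1,2\}$. For $((3),(3))$, the only annihilating weight vector is $x_2^3$, which would force total content $(0,6,0)\neq(2,4,0)$. No product ever needs to be computed.
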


\begin{proof}
Set $\gamma
\coloneq
\cont(S)
=
(2,4,0)$.

\paragraph{\textit{Reduction to a content-homogeneous factorization.}}

We first show that, if
\[
\bx_S
\in
\left(
I_{\mathrm S}(P_{(4,2),3})^2
\right)_{(4,2)},
\]
then there exist partitions $\alpha,\beta$, contents $\eta,\theta$ and nonzero elements
\[
u
\in
\bigl(I_{\mathrm S}(P_{(4,2),3})\bigr)_\alpha[\eta],
\qquad
v
\in
\bigl(I_{\mathrm S}(P_{(4,2),3})\bigr)_\beta[\theta]
\]
such that $\eta+\theta=\gamma$ and $\bx_S
=
\cM^{\alpha,\beta}_{(4,2)}
(u\otimes v)$.

Let $\varphi
\in
\bigl(I_{\mathrm S}(P_{(4,2),3})\bigr)_\mu$.
Since $
\left\langle\bv_{(4,2),3}\right\rangle$ is $\mathbb T$-stable, for every $t\in\mathbb T$ there exists a nonzero scalar $c_t$ such that
\[
t\cdot\bv_{(4,2),3}
=
c_t\bv_{(4,2),3}.
\]
By the equivariance and bilinearity of the Schur apolarity action,
\[
\begin{aligned}
c_t
\bigl(
(t\cdot\varphi)*\bv_{(4,2),3}
\bigr)
&=
(t\cdot\varphi)*
(t\cdot\bv_{(4,2),3})
\\
&=
t\cdot
\bigl(
\varphi*\bv_{(4,2),3}
\bigr)
=
0.
\end{aligned}
\]
Since $c_t\neq0$, it follows that $(t\cdot\varphi)*\bv_{(4,2),3}=0$, and hence
\[
t\cdot\varphi
\in
I_{\mathrm S}(P_{(4,2),3}).
\]
Moreover, $t\cdot\varphi\in\S_\mu V^*$, because $\S_\mu V^*$ is $\mathbb T$-stable. Therefore every component $\bigl(I_{\mathrm S}(P_{(4,2),3})\bigr)_\mu$ is $\mathbb T$-stable. Since the multiplication maps $\cM^{\alpha,\beta}_{(4,2)}$ are $G$-equivariant, they preserve the content spaces introduced in \Cref{def: content spaces}. The Schur apolar ideal decomposes as
\[
I_{\mathrm S}(P_{(4,2),3})
=
\bigoplus_\alpha
\bigl(I_{\mathrm S}(P_{(4,2),3})\bigr)_\alpha.
\]
Since each component on the right-hand side is $\mathbb T$-stable, it further decomposes into content spaces:
\[
I_{\mathrm S}(P_{(4,2),3})
=
\bigoplus_{\alpha,\eta}
\bigl(I_{\mathrm S}(P_{(4,2),3})\bigr)_\alpha[\eta].
\]
It follows that
\begin{equation}
\label{eq: content decomposition schur square}
\begin{aligned}
&
\left(
I_{\mathrm S}(P_{(4,2),3})^2
\right)_{(4,2)}[\gamma]
\\
&\qquad =
\sum_{\alpha,\beta,\eta,\theta}
\cM^{\alpha,\beta}_{(4,2)}
\left(
\bigl(I_{\mathrm S}(P_{(4,2),3})\bigr)_\alpha[\eta]
\otimes
\bigl(I_{\mathrm S}(P_{(4,2),3})\bigr)_\beta[\theta]
\right),
\end{aligned}
\end{equation}
where the sum runs over the partitions and contents satisfying $c_{\alpha,\beta}^{(4,2)}>0$,  $\eta+\theta=\gamma$. The tableau $S$ is the unique semistandard tableau of shape $(4,2)$ and content $\gamma$. Therefore, $\S_{(4,2)}V^*[\gamma] = \C\bx_S$.

Suppose, by contradiction, that $\bx_S
\in
\left(
I_{\mathrm S}(P_{(4,2),3})^2
\right)_{(4,2)}$.
By \eqref{eq: content decomposition schur square}, the vector $\bx_S$ is a sum of elements of the form $\cM^{\alpha,\beta}_{(4,2)}
(u'\otimes v')$, where
$u'
\in
\bigl(I_{\mathrm S}(P_{(4,2),3})\bigr)_\alpha[\eta]$, $v'
\in
\bigl(I_{\mathrm S}(P_{(4,2),3})\bigr)_\beta[\theta]$, $\eta+\theta=\gamma$.
Every such summand belongs to $\S_{(4,2)}V^*[\gamma] = \C\bx_S$.
Since their sum is the nonzero vector $\bx_S$, at least one summand equals $c\bx_S$ for some $c\in\C^*$. Rescaling one of its factors, we obtain partitions $\alpha,\beta$, contents $\eta,\theta$, and nonzero elements
\[u
\in
\bigl(I_{\mathrm S}(P_{(4,2),3})\bigr)_\alpha[\eta], \qquad v
\in
\bigl(I_{\mathrm S}(P_{(4,2),3})\bigr)_\beta[\theta]
\]
such that
\[\eta+\theta=\gamma,
\qquad
\bx_S
=
\cM^{\alpha,\beta}_{(4,2)}
(u\otimes v).
\]

\paragraph{\textit{Possible Littlewood--Richardson factorizations.}}

The ideal $I_{\mathrm S}(P_{(4,2),3})$ contains no nonzero scalars, since
\[
c*\bv_{(4,2),3}
=
c\bv_{(4,2),3}
\qquad
\text{for every }c\in\C.
\]
Thus, since $u$ and $v$ are nonzero, the partitions $\alpha$ and $\beta$ are both nonempty. Moreover, the dual Schur algebra is graded by the size of the partitions, so $|\alpha|+|\beta| = |(4,2)| = 6$.
The Littlewood--Richardson rule gives the following possible unordered pairs:
\[
\begin{gathered}
((1),(4,1)),
\quad
((1),(3,2)),
\quad
((2),(4)),
\quad
((2),(3,1)),
\quad
((2),(2,2)),
\\
((1,1),(3,1)),
\quad
((2,1),(3)),
\quad
((2,1),(2,1)),
\quad
((3),(3)).
\end{gathered}
\]
In each case, we examine the factor having the indicated shape, whether it is $u$ or $v$, and relabel the two factors accordingly.
Since $\eta,\theta\in\N^3$,  $\eta+\theta=(2,4,0)$, we have $\eta_3=\theta_3=0$.
Hence every semistandard tableau indexing a basis element of $\S_\alpha V^*[\eta]$ or  $\S_\beta V^*[\theta]$ has entries only in $\{1,2\}$. All the partitions in the preceding list have at most two rows. For each such shape, there is at most one semistandard tableau with a prescribed content and entries in $\{1,2\}$. Therefore the spaces $\S_\alpha V^*[\eta]$ and $\S_\beta V^*[\theta]$ have dimension at most one. Since $u$ and $v$ are nonzero, they are scalar multiples of the corresponding tableau basis elements. Up to a nonzero scalar, one has
\begin{equation}
\label{eq: highest weight vector 42}
\bv_{(4,2),3}
\sim
e_{[2]}
\otimes
e_{[2]}
\otimes
e_1
\otimes
e_1.
\end{equation}

\paragraph{\textit{Exclusion of the factors.}}
\begin{itemize}
    \item 
Assume first that $\alpha=(1)$.
This occurs for the pairs $((1),(4,1))$ and $((1),(3,2))$. Since $u\neq0$, its content $\eta$ is either $(1,0,0)$ or $(0,1,0)$, and $u$ is a scalar multiple of $x_1$ or $x_2$, respectively. Since
\[
x_1\mathbin{\lrcorner}e_{[2]}
=
e_2,
\qquad
x_2\mathbin{\lrcorner}e_{[2]}
=
-e_1,
\]
neither $x_1$ nor $x_2$ annihilates $\bv_{(4,2),3}$ via Schur apolarity. Thus,
\[
\bigl(I_{\mathrm S}(P_{(4,2),3})\bigr)_{(1)}[(1,0,0)]
=
\bigl(I_{\mathrm S}(P_{(4,2),3})\bigr)_{(1)}[(0,1,0)]
=
0.
\]
This contradicts $u
\in
\bigl(I_{\mathrm S}(P_{(4,2),3})\bigr)_{(1)}[\eta]$ and excludes both pairs.
\item 
Assume now that $\alpha=(2)$.
This occurs for the pairs $((2),(4))$, $((2),(3,1))$ and $((2),(2,2))$. 
The content $\eta$ is one of $(2,0,0)$, $(1,1,0)$, $(0,2,0)$, so $u$ is a scalar multiple of $x_1^2$, $x_1x_2$ and $x_2^2$ respectively. A direct computation shows that none of these elements annihilates $\bv_{(4,2),3}$ via Schur apolarity. Therefore,
\[
\bigl(I_{\mathrm S}(P_{(4,2),3})\bigr)_{(2)}[(2,0,0)]
 =
\bigl(I_{\mathrm S}(P_{(4,2),3})\bigr)_{(2)}[(1,1,0)] =
\bigl(I_{\mathrm S}(P_{(4,2),3})\bigr)_{(2)}[(0,2,0)]
=0.
\]
This contradicts $u
\in
\bigl(I_{\mathrm S}(P_{(4,2),3})\bigr)_{(2)}[\eta]$
and excludes all three pairs.
\item  Suppose that $\alpha=(1,1)$.
In this case, $(\alpha,\beta) = ((1,1),(3,1))$. Since $u\neq0$, the column condition forces $\eta=(1,1,0)$, and $u$ is a scalar multiple of $x_{[2]} = x_1\wedge x_2$. Since $x_{[2]}\mathbin{\lrcorner}e_{[2]} = 1$, this element does not annihilate $\bv_{(4,2),3}$. Hence
\[
\bigl(I_{\mathrm S}(P_{(4,2),3})\bigr)_{(1,1)}[(1,1,0)]
=
0,
\]
contradicting $u \in \bigl(I_{\mathrm S}(P_{(4,2),3})\bigr)_{(1,1)}[\eta]$.

\item Suppose that $\alpha=(2,1)$.
This occurs for the pairs $((2,1),(3))$, and $((2,1),(2,1))$. Let $T_1$ and $T_2$ be the semistandard tableaux of shape $(2,1)$ with column index sequences
\[
\ind(T_1)
=
\bigl(
\{1,2\},
\{1\}
\bigr),
\qquad
\ind(T_2)
=
\bigl(
\{1,2\},
\{2\}
\bigr).
\]
Their contents are
\[
\cont(T_1)=(2,1,0),
\qquad
\cont(T_2)=(1,2,0),
\]
and they are the only semistandard tableaux of shape $(2,1)$ with entries in $\{1,2\}$. Since $u\neq0$, it is a scalar multiple of either $\bx_{T_1}$ or $\bx_{T_2}$. We find that $\bx_{T_1}*\bv_{(4,2),3}$ and $\bx_{T_2}*\bv_{(4,2),3}$ are nonzero scalar multiples of $e_2\otimes e_1\otimes e_1$ and $e_1\otimes e_1\otimes e_1$, respectively. Thus,
\[
\bigl(I_{\mathrm S}(P_{(4,2),3})\bigr)_{(2,1)}[(2,1,0)]
=\bigl(I_{\mathrm S}(P_{(4,2),3})\bigr)_{(2,1)}[(1,2,0)]
=0.
\]
This contradicts $u
\in
\bigl(I_{\mathrm S}(P_{(4,2),3})\bigr)_{(2,1)}[\eta]$ and excludes both pairs.

\item The only remaining possibility is $\alpha=\beta=(3)$.
The possible contents of $u$ are $(3,0,0)$, $(2,1,0)$, $(1,2,0)$ and $(0,3,0)$ and the corresponding content spaces are generated by $x_1^3$, $x_1^2x_2$, $x_1x_2^2$ and $x_2^3$, respectively. A direct computation shows that the first three elements act nontrivially on $\bv_{(4,2),3}$ via Schur apolarity. Hence,
\[
\bigl(I_{\mathrm S}(P_{(4,2),3})\bigr)_{(3)}[(3,0,0)]
 = 
\bigl(I_{\mathrm S}(P_{(4,2),3})\bigr)_{(3)}[(2,1,0)]
=
\bigl(I_{\mathrm S}(P_{(4,2),3})\bigr)_{(3)}[(1,2,0)]
=0.
\]
On the other hand, $x_2^3*\bv_{(4,2),3}=0$, since one of the factors $x_2$ is contracted with one of the factors $e_1$ of \eqref{eq: highest weight vector 42}, and $x_2(e_1)=0$.
Therefore,
\[
\bigl(I_{\mathrm S}(P_{(4,2),3})\bigr)_{(3)}[(0,3,0)]
=
\C x_2^3.
\]
Since $u
\in
\bigl(I_{\mathrm S}(P_{(4,2),3})\bigr)_{(3)}[\eta]$, and  $v \in \bigl(I_{\mathrm S}(P_{(4,2),3})\bigr)_{(3)}[\theta]$ are nonzero, both $u$ and $v$ are scalar multiples of $x_2^3$.
Thus, $\eta=\theta=(0,3,0)$, which contradicts $\eta+\theta
=
\gamma
=
(2,4,0)$.
\end{itemize}\end{proof}

\begin{remark}[The embedding in the counterexample]
\label{rem: embedding of square counterexample}
The counterexample concerns the $\cO(2,2)$-embedding of $\Fl(1,2;V)$ corresponding to the partition $(4,2)$. This is different from the $\cO(1,1)$-embedding associated with the partition $(2,1)$, which will be the main object of the applications in the second part of the paper.
\end{remark}

The same counterexample also rules out the algebraic square of Staffolani's auxiliary ideal. Indeed, $I_{\mathrm R}(P_{(4,2),3})
\subseteq
I_{\mathrm S}(P_{(4,2),3})$, and therefore $I_{\mathrm R}(P_{(4,2),3})^2
\subseteq
I_{\mathrm S}(P_{(4,2),3})^2$.
It follows from \Cref{prop: algebraic Schur square fails} that also $\bx_S
\notin
\left(
I_{\mathrm R}(P_{(4,2),3})^2
\right)_{(4,2)}$. The obstruction is therefore not caused by the choice between $I_{\mathrm R}(P)$ and $I_{\mathrm S}(P)$. Rather, it shows that multiplication in the dual Schur algebra does not, in general, encode first-order vanishing along the highest-weight orbit. To recover the tangential geometry while remaining inside the dual Schur algebra, we must introduce a different square, defined through Schur apolarity and the infinitesimal $G$-action.

\subsection{The geometric Schur square and the Schur Dual Terracini Lemma}
\label{subsect: geometric Schur square}

The counterexample in \Cref{prop: algebraic Schur square fails} shows that multiplication in the dual Schur algebra does not, in general, encode first-order vanishing along the closed orbit. We now introduce the correct replacement for the algebraic square.

The natural $G$-actions on the Schur modules induce a componentwise action of $\Lie{gl}(V)$ on $\S^\bullet V^*$. Since the multiplication maps defining $\diamond$ are $G$-equivariant, this infinitesimal action is by derivations: for every $\xi\in\Lie{gl}(V)$, $\varphi,\psi\in\S^\bullet V^*$:
\[\xi\cdot(\varphi\diamond\psi)
=
(\xi\cdot\varphi)\diamond\psi
+
\varphi\diamond(\xi\cdot\psi).
\]
We shall also use the infinitesimal form of the equivariance of Schur apolarity.

\begin{lemma}
\label{lem: infinitesimal equivariance Schur apolarity}
Let $\varphi\in\S_\mu V^*$, $p\in\S_\lambda V$, $\xi\in\Lie{gl}(V)$. Then
\[(\xi\cdot\varphi)*p
+
\varphi*(\xi\cdot p)
=
\xi\cdot(\varphi*p).
\]In particular, if $\varphi*p=0$, then
\[
(\xi\cdot\varphi)*p
=
-
\varphi*(\xi\cdot p).
\]\end{lemma}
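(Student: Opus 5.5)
The plan is to obtain the identity by differentiating the group-equivariance of the Schur apolarity action. The first step is to record that $*:\S_\mu V^*\otimes\S_\lambda V\to\S_{\lambda/\mu}V$ is $G$-equivariant: for every $g\in G$,
\[
(g\cdot\varphi)*(g\cdot p)=g\cdot(\varphi*p).
\]
This follows from \Cref{def: Schur apolarity}. The action is the restriction of the columnwise skew-symmetric contraction
$\widetilde{*}_{\mu,\lambda}:\superwedge^{\mu'}V^*\otimes\superwedge^{\lambda'}V\to\bigotimes_c\superwedge^{\lambda'_c-\mu'_c}V$.
Each contraction $\superwedge^{a}V^*\otimes\superwedge^{b}V\to\superwedge^{b-a}V$ is $\GL(V)$-equivariant, being the natural contraction, and tensor products of equivariant maps are equivariant. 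The standard column realizations of $\S_\mu V^*$, $\S_\lambda V$ and $\S_{\lambda/\mu}V$ are $G$-submodules of the corresponding tensor products, so the restriction remains equivariant. In the case $\mu\not\subseteq\lambda$ the action is identically zero, and both sides of the claimed identity vanish.

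Next I differentiate. Fix $\xi\in\Lie{gl}(V)$ and set $g_t=\exp(t\xi)$. The map $t\mapsto(g_t\cdot\varphi)*(g_t\cdot p)$ is the composite of the bilinear map $*$ with two smooth curves. Its derivative at $t=0$ is therefore
\[
(\xi\cdot\varphi)*p+\varphi*(\xi\cdot p),
\]
by the Leibniz rule for bilinear maps. The derivative of $t\mapsto g_t\cdot(\varphi*p)$ at $t=0$ is $\xi\cdot(\varphi*p)$, by definition of the differentiated action on $\S_{\lambda/\mu}V$. Equating the two derivatives gives the identity. Alternatively, one can argue purely algebraically: the contraction $V^*\otimes\superwedge^bV\to\superwedge^{b-1}V$ intertwines the $\Lie{gl}(V)$-actions, with derivation actions on exterior powers, and the identity then follows columnwise. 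Here one has to note that the infinitesimal action on $V^*$ is $-\xi^{T}$, and this sign is what produces the Leibniz form.

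The \emph{in particular} statement is immediate. If $\varphi*p=0$, then $\xi\cdot(\varphi*p)=\xi\cdot0=0$, and rearranging gives the second formula.

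The only point needing care is the equivariance of the restriction to the column realizations. In particular, one must make sure that $\S_{\lambda/\mu}V$ is realized as a $G$-stable subspace and carries the action inherited from $\bigotimes_c\superwedge^{\lambda'_c-\mu'_c}V$. This is what \Cref{def: Schur apolarity} and \cite{Staffolani2023} provide, so I expect no genuine obstacle.
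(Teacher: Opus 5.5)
Your proposal is correct and is essentially the paper's argument: the paper also invokes the $G$-equivariance $(g\cdot\varphi)*(g\cdot p)=g\cdot(\varphi*p)$ and differentiates it at the identity in the direction $\xi$. Your justification of equivariance via the columnwise contractions, and your remark on the trivial case $\mu\not\subseteq\lambda$, fill in details that the paper leaves implicit.
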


\begin{proof}
The Schur apolarity action is $G$-equivariant, so $(g\cdot\varphi)*(g\cdot p)
=
g\cdot(\varphi*p)$ for every $g\in G$. Differentiating this identity at the identity of $G$ in the direction $\xi$ gives the asserted identity.
\end{proof}

\begin{definition}[Geometric Schur square]
\label{def: geometric Schur square}
Let $P=[p]\in\P(\S_\lambda V)$. The \emph{geometric Schur square} of the Schur apolar ideal of $P$ is
\[I_{\mathrm S}(P)^{\langle2\rangle}
\coloneq
\left\{
\varphi\in\S^\bullet V^*
\ \middle|\
\begin{array}{l}
\varphi*p=0,\\[2pt]
(\xi\cdot\varphi)*p=0
\text{ for every }\xi\in\Lie{gl}(V)
\end{array}
\right\}.
\]Equivalently,
\[I_{\mathrm S}(P)^{\langle2\rangle}
=
\left\{
\varphi\in I_{\mathrm S}(P)
\ \middle|\
\Lie{gl}(V)\cdot\varphi
\subseteq
I_{\mathrm S}(P)
\right\}.
\]
The definition is independent of the choice of the nonzero affine representative $p$ of $P$.
\end{definition}

\begin{remark}
\label{rem: gl versus sl geometric Schur square}
The definition is unchanged if $\Lie{gl}(V)$ is replaced by $\Lie{sl}(V)$. Indeed, $\Lie{gl}(V) = \Lie{sl}(V)\oplus\C\operatorname{Id}_V$, and the identity endomorphism acts on each component $\S_\mu V^*$ by scalar multiplication. Therefore, the condition corresponding to $\operatorname{Id}_V$ is already implied by $\varphi*p=0$. We use $\Lie{gl}(V)$ throughout in accordance with the convention $G=\GL(V)$.
\end{remark}

\begin{proposition}
\label{prop: geometric Schur square ideal}
Let $P\in\P(\S_\lambda V)$. Then $I_{\mathrm S}(P)^{\langle2\rangle}
\subseteq
\S^\bullet V^*$ is an ideal of the dual Schur algebra. Moreover,
\[I_{\mathrm S}(P)^2
\subseteq
I_{\mathrm S}(P)^{\langle2\rangle}.
\]\end{proposition}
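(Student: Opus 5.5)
The plan is to use only three facts. First, $I_{\mathrm S}(P)$ is an ideal of $(\S^\bullet V^*,\diamond)$ (\Cref{prop: ideal property}). Second, $\Lie{gl}(V)$ acts on $\S^\bullet V^*$ by derivations of $\diamond$. Third, the second description in \Cref{def: geometric Schur square}:
\[
I_{\mathrm S}(P)^{\langle2\rangle}=\{\varphi\in I_{\mathrm S}(P)\mid \Lie{gl}(V)\cdot\varphi\subseteq I_{\mathrm S}(P)\}.
\]
I will use this description throughout and never need the explicit form of the contraction.

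\textbf{Step 1: it is a subspace.} $I_{\mathrm S}(P)^{\langle2\rangle}$ is a linear subspace, because $I_{\mathrm S}(P)$ is one and $\varphi\mapsto\xi\cdot\varphi$ is linear for each $\xi$.

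\textbf{Step 2: the ideal property.} Let $\varphi\in I_{\mathrm S}(P)^{\langle2\rangle}$ and $\psi\in\S^\bullet V^*$. Since $\varphi\in I_{\mathrm S}(P)$ and the latter is an ideal, both $\varphi\diamond\psi$ and $\psi\diamond\varphi$ lie in $I_{\mathrm S}(P)$. For $\xi\in\Lie{gl}(V)$, the derivation rule gives
\[
\xi\cdot(\varphi\diamond\psi)=(\xi\cdot\varphi)\diamond\psi+\varphi\diamond(\xi\cdot\psi).
\]
The first summand lies in $I_{\mathrm S}(P)$ because $\xi\cdot\varphi\in I_{\mathrm S}(P)$ and $I_{\mathrm S}(P)$ is an ideal. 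The second lies in $I_{\mathrm S}(P)$ because $\varphi\in I_{\mathrm S}(P)$. Hence $\varphi\diamond\psi\in I_{\mathrm S}(P)^{\langle2\rangle}$, and the argument for $\psi\diamond\varphi$ is symmetric.

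\textbf{Step 3: the inclusion $I_{\mathrm S}(P)^2\subseteq I_{\mathrm S}(P)^{\langle2\rangle}$.} By Step 1, it suffices to treat generators $\varphi\diamond\psi$ with $\varphi,\psi\in I_{\mathrm S}(P)$. Such a product lies in $I_{\mathrm S}(P)$. Applying $\xi$ produces $(\xi\cdot\varphi)\diamond\psi+\varphi\diamond(\xi\cdot\psi)$. Each term has a factor in $I_{\mathrm S}(P)$, so each lies in the ideal $I_{\mathrm S}(P)$. This is the Leibniz-rule version of ``a product of two functions vanishing at a point vanishes to second order.''

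\textbf{Main obstacle: the derivation property.} The only point needing care is justifying that $\Lie{gl}(V)$ acts by derivations, which the text asserts just before \Cref{lem: infinitesimal equivariance Schur apolarity}. It follows from the identity
\[
g\cdot\mathcal M^{\alpha,\beta}_\gamma(u\otimes v)=\mathcal M^{\alpha,\beta}_\gamma(gu\otimes gv),
\]
differentiated at $g=\operatorname{Id}$ and applied componentwise. I also need $\diamond$ to be bilinear and compatible with the grading, so that the componentwise action is well defined on finite sums. This is immediate because $\diamond$ is defined as a direct sum of the maps $\mathcal M^{\alpha,\beta}_\gamma$.
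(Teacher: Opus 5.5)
Your proof is correct and follows the paper's argument. Both use that $I_{\mathrm S}(P)$ is an ideal together with the Leibniz rule for the $\Lie{gl}(V)$-action on $\diamond$-products. This is applied first to products with a factor in $I_{\mathrm S}(P)^{\langle2\rangle}$, and then to products of two elements of $I_{\mathrm S}(P)$. Your extra remarks (the subspace property, checking both orders of multiplication, and deriving the derivation rule by differentiating equivariance) only make explicit what the paper takes for granted.
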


\begin{proof}
Let $\varphi\in I_{\mathrm S}(P)^{\langle2\rangle}$, $\psi\in\S^\bullet V^*$. Since $\varphi\in I_{\mathrm S}(P)$ and $I_{\mathrm S}(P)$ is an ideal by \Cref{prop: ideal property}, one has $(\psi\diamond\varphi)*p=0$.

Now let $\xi\in\Lie{gl}(V)$. By the derivation rule displayed above,  $\xi\cdot(\psi\diamond\varphi)
=
(\xi\cdot\psi)\diamond\varphi
+
\psi\diamond(\xi\cdot\varphi)$.
The first summand belongs to $I_{\mathrm S}(P)$ because $\varphi\in I_{\mathrm S}(P)$, whereas the second belongs to $I_{\mathrm S}(P)$ because $\xi\cdot\varphi\in I_{\mathrm S}(P)$ by the definition of the geometric Schur square. Hence $\bigl( \xi\cdot(\psi\diamond\varphi) \bigr)*p = 0$. Therefore, $\psi\diamond\varphi
\in
I_{\mathrm S}(P)^{\langle2\rangle}$, which proves that $I_{\mathrm S}(P)^{\langle2\rangle}$ is an ideal.

To prove the asserted inclusion, let $
\varphi,\psi\in I_{\mathrm S}(P)$. Since $I_{\mathrm S}(P)$ is an ideal, $(\varphi\diamond\psi)*p=0$. Moreover, $\xi\cdot(\varphi\diamond\psi) = (\xi\cdot\varphi)\diamond\psi + \varphi\diamond(\xi\cdot\psi)$.
Both summands belong to $I_{\mathrm S}(P)$: the first because $\psi\in I_{\mathrm S}(P)$ and the second because $\varphi\in I_{\mathrm S}(P)$. Thus, $\bigl( \xi\cdot(\varphi\diamond\psi) \bigr)*p = 0$ for every $\xi\in\Lie{gl}(V)$, and hence $\varphi\diamond\psi \in I_{\mathrm S}(P)^{\langle2\rangle}$. Taking linear combinations proves the desired inclusion.
\end{proof}

We can now identify the component of the geometric Schur square that corresponds to the given embedding.

\begin{theorem}[Tangential Schur Apolarity]
\label{thm: tangential Schur apolarity}
Let $P\in F_{\lambda,n}$. Then
\[\left(
I_{\mathrm S}(P)^{\langle2\rangle}
\right)_\lambda
=
\left(
\widehat T_PF_{\lambda,n}
\right)^\perp
\subseteq
\S_\lambda V^*.
\]
\end{theorem}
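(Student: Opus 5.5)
Because both sides transform compatibly under $G$, I will first reduce to the highest-weight point $P=P_{\lambda,n}$ with $p=\bv_{\lambda,n}$. If $P=g\cdot P_{\lambda,n}$, then equivariance of Schur apolarity gives $I_{\mathrm S}(g\cdot P)=g\cdot I_{\mathrm S}(P_{\lambda,n})$. Since $g\cdot(\xi\cdot\varphi)=(\mathrm{Ad}_g\xi)\cdot(g\cdot\varphi)$ and $\mathrm{Ad}_g$ is a bijection of $\Lie{gl}(V)$, the same relation holds for the geometric Schur square. By \Cref{rem: tangent conormal homogeneity} the conormal spaces transform in the same way, so it is enough to prove the equality at $P_{\lambda,n}$.

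Next I will restrict to the $\lambda$-component. For $\varphi\in\S_\lambda V^*$, the action $\varphi*p$ is the perfect pairing $\S_\lambda V^*\otimes\S_\lambda V\to\C$, which is a nonzero multiple of $\langle\ ,\ \rangle_\lambda$. Moreover, $\xi\cdot\varphi$ again lies in $\S_\lambda V^*$. So $\varphi\in(I_{\mathrm S}(P)^{\langle2\rangle})_\lambda$ holds exactly when $\langle\varphi,p\rangle_\lambda=0$ and $\langle\xi\cdot\varphi,p\rangle_\lambda=0$ for every $\xi\in\Lie{gl}(V)$. By \Cref{lem: infinitesimal equivariance Schur apolarity}, applied once $\varphi*p=0$ is known, the second family of conditions is equivalent to $\varphi*(\xi\cdot p)=0$ for all $\xi$. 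At the level of the pairing this is just $\langle\xi\cdot\varphi,p\rangle=-\langle\varphi,\xi\cdot p\rangle$, since the target $\S_{\lambda/\lambda}V\simeq\C$ is the trivial representation and $\xi$ acts on it by zero.

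It follows that $(I_{\mathrm S}(P)^{\langle2\rangle})_\lambda$ is the annihilator of the subspace $\C p+d\rho_\lambda(\Lie{gl}(V))\cdot p$. Since $\operatorname{Id}_V$ acts on $\S_\lambda V$ by the scalar $|\lambda|\neq0$, we have $p\in d\rho_\lambda(\Lie{gl}(V))\cdot p$. By \eqref{eq: tangent via action}, this subspace is therefore $\widehat T_PF_{\lambda,n}$. Taking annihilators, which are the same for the Schur pairing and the evaluation pairing, gives the theorem.

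The only delicate point is the sign and identification in the infinitesimal equivariance on the $\lambda$-component. One must check that $\xi$ acts trivially on $\S_{\lambda/\lambda}V\simeq\C$; the trace character cancels between $\S_\lambda V^*$ and $\S_\lambda V$. Without this, the derived identity would carry an extra scalar term, although that term would also vanish because $\varphi*p=0$. Everything else is formal. As a sanity check, I will compare the result with \Cref{prop: tangent and conormal bases} at the highest-weight point.
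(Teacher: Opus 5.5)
Your proposal is correct and follows the paper's argument: both use \Cref{lem: infinitesimal equivariance Schur apolarity} to turn the condition $(\xi\cdot\varphi)*p=0$ into $\varphi*(\xi\cdot p)=0$, and so identify the $\lambda$-component of the geometric Schur square with the annihilator of $\langle p\rangle+d\rho_\lambda(\Lie{gl}(V))\cdot p=\widehat T_PF_{\lambda,n}$. The one difference is your preliminary reduction to $P_{\lambda,n}$ by homogeneity. It is harmless but unnecessary, since the paper writes the orbit tangent space directly at an arbitrary representative $p$.
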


\begin{proof}
Choose a nonzero affine representative $p\in\S_\lambda V$ with  $P=[p]$.
Since $F_{\lambda,n}$ is a $G$-orbit, its affine tangent space is
\[\widehat T_PF_{\lambda,n}
=
\langle p\rangle
+
d\rho_\lambda\bigl(\Lie{gl}(V)\bigr)\cdot p.
\]
Let first $\varphi
\in
\left(
I_{\mathrm S}(P)^{\langle2\rangle}
\right)_\lambda$. By definition, $\varphi*p=0$ and $(\xi\cdot\varphi)*p=0$ for every $\xi\in\Lie{gl}(V)$. Since $\varphi*p=0$,
\Cref{lem: infinitesimal equivariance Schur apolarity} gives
\[
\varphi*(\xi\cdot p)
=
-
(\xi\cdot\varphi)*p
=
0.
\]
Thus, $\varphi$ annihilates both $p$ and every vector of the form $\xi\cdot p$. By the preceding description of the tangent space, it follows that
$\varphi
\in
\left(
\widehat T_PF_{\lambda,n}
\right)^\perp$.

Conversely, let
$\varphi
\in
\left(
\widehat T_PF_{\lambda,n}
\right)^\perp$.
Since $p\in\widehat T_PF_{\lambda,n}$, one has $\varphi*p=0$. Furthermore, $\xi\cdot p
\in
\widehat T_PF_{\lambda,n}$ for every $\xi\in\Lie{gl}(V)$, and therefore $\varphi*(\xi\cdot p)=0$.
Applying
\Cref{lem: infinitesimal equivariance Schur apolarity}, we obtain
\[
(\xi\cdot\varphi)*p=0
\qquad
\text{for every }\xi\in\Lie{gl}(V).
\]
Hence $\varphi
\in
\left(
I_{\mathrm S}(P)^{\langle2\rangle}
\right)_\lambda$.
\end{proof}

Combining \Cref{cor: dual terracini} and \Cref{thm: tangential Schur apolarity} gives the form of Terracini's Lemma that will be used in the rest of the paper.

\begin{corollary}[Schur Dual Terracini Lemma]
\label{cor: schur dual terracini}
Let $P_1,\ldots,P_s
\in
F_{\lambda,n}$ be general points, and let $Q
\in
\langle P_1,\ldots,P_s\rangle$ be general. Then
\[\left(
\widehat T_Q\sigma_s(F_{\lambda,n})
\right)^\perp
=
\bigcap_{i=1}^s
\left(
I_{\mathrm S}(P_i)^{\langle2\rangle}
\right)_\lambda.
\]Consequently,
\[\dim\sigma_s(F_{\lambda,n})
=
N_{\lambda,n}
-
\dim
\left(
\bigcap_{i=1}^s
\left(
I_{\mathrm S}(P_i)^{\langle2\rangle}
\right)_\lambda
\right).\]\end{corollary}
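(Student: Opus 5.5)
This is a direct substitution of \Cref{thm: tangential Schur apolarity} into \Cref{cor: dual terracini}; no new geometric input is needed.

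First, I will apply \Cref{cor: dual terracini} with $X=F_{\lambda,n}$ and $W=\S_\lambda V$. Since $F_{\lambda,n}$ is a homogeneous $G$-orbit, every point is smooth, so general points $P_1,\ldots,P_s$ and a general $Q\in\langle P_1,\ldots,P_s\rangle$ satisfy the hypotheses of \Cref{thm: terracini}. This gives
\[
\bigl(\widehat T_Q\sigma_s(F_{\lambda,n})\bigr)^\perp=\bigcap_{i=1}^s\bigl(\widehat T_{P_i}F_{\lambda,n}\bigr)^\perp .
\]
Here the annihilators are a priori taken with respect to the evaluation pairing $\langle\ ,\ \rangle_\lambda$. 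I will note that the Schur apolarity pairing $\S_\lambda V^*\otimes\S_\lambda V\to\C$ is a nonzero scalar multiple of the evaluation pairing, as observed after \Cref{rem: classical specializations Schur apolarity}. Hence the two notions of annihilator coincide, and no ambiguity arises.

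Next, for each $i$, I will invoke \Cref{thm: tangential Schur apolarity} at $P=P_i\in F_{\lambda,n}$ to replace $(\widehat T_{P_i}F_{\lambda,n})^\perp$ with $(I_{\mathrm S}(P_i)^{\langle2\rangle})_\lambda$. This yields the first displayed equality.

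For the dimension formula, I will use the second part of \Cref{cor: dual terracini} with $N=N_{\lambda,n}=\dim\P(\S_\lambda V)$ and substitute the same identification. The formula can also be checked directly: $\dim\sigma_s = \dim \widehat T_Q\sigma_s - 1$, and the annihilator has dimension $\dim W - \dim\widehat T_Q\sigma_s$.

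The only point needing care, and it is minor, is the consistency of pairings. Once that is settled, the proof is a one-line combination of the two earlier results.
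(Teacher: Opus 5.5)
Your proposal is correct and follows the paper's own argument: the paper obtains the corollary by substituting \Cref{thm: tangential Schur apolarity} into \Cref{cor: dual terracini}, with no further input. Your check that the Schur apolarity pairing is a nonzero multiple of the evaluation pairing, so that both give the same annihilators, matches the remark the paper makes after \Cref{rem: classical specializations Schur apolarity}.
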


\begin{remark}
\label{rem: geometric meaning Schur square}
Under the natural identification
$\S_\lambda V^*
\simeq
H^0
\left(
F_{\lambda,n},
\cO_{F_{\lambda,n}}(\bd)
\right)$,
let
$\cI_{P,F_{\lambda,n}}
\subseteq
\cO_{F_{\lambda,n}}$
be the ideal sheaf of a point
$P\in F_{\lambda,n}$.
The double point supported at $P$, namely the first infinitesimal
neighbourhood of $P$ in $F_{\lambda,n}$, is the closed subscheme
defined by $\cI_{P,F_{\lambda,n}}^2$. Then
\[
\left(
I_{\mathrm S}(P)^{\langle2\rangle}
\right)_\lambda
=
H^0
\left(
F_{\lambda,n},
\cO_{F_{\lambda,n}}(\bd)
\otimes
\cI_{P,F_{\lambda,n}}^2
\right).
\]
More generally, let
$P_1,\ldots,P_s\in F_{\lambda,n}$
be general points, and let
$Z\subseteq F_{\lambda,n}$
be the scheme-theoretic union of the double points supported at them.
Equivalently,
\[
\cI_{Z,F_{\lambda,n}}
=
\bigcap_{i=1}^s
\cI_{P_i,F_{\lambda,n}}^2.
\]
Then
\[
\bigcap_{i=1}^s
\left(
I_{\mathrm S}(P_i)^{\langle2\rangle}
\right)_\lambda
=
H^0
\left(
F_{\lambda,n},
\cO_{F_{\lambda,n}}(\bd)
\otimes
\cI_{Z,F_{\lambda,n}}
\right).
\]
Therefore,
\[
\dim\sigma_s(F_{\lambda,n})
=
N_{\lambda,n}
-
h^0
\left(
F_{\lambda,n},
\cO_{F_{\lambda,n}}(\bd)
\otimes
\cI_{Z,F_{\lambda,n}}
\right).
\]
Thus, the $\lambda$-component of
$I_{\mathrm S}(P)^{\langle2\rangle}$
is precisely the space of sections of
$\cO_{F_{\lambda,n}}(\bd)$
vanishing along the double point supported at $P$.
The full geometric Schur square
$I_{\mathrm S}(P)^{\langle2\rangle}$,
however, is an ideal of the dual Schur algebra
$(\S^\bullet V^*,\diamond)$,
defined intrinsically through Schur apolarity and the infinitesimal
$G$-action. In particular, it may have nonzero components in several
Schur modules occurring in $\S^\bullet V^*$.
\end{remark}

\begin{remark}[Algebraic versus geometric Schur squares]
\label{rem: algebraic versus geometric Schur squares}
By
\Cref{prop: geometric Schur square ideal} and \Cref{thm: tangential Schur apolarity},
one always has
\[\left(
I_{\mathrm S}(P)^2
\right)_\lambda
\subseteq
\left(
I_{\mathrm S}(P)^{\langle2\rangle}
\right)_\lambda
=
\left(
\widehat T_PF_{\lambda,n}
\right)^\perp.
\]
For Veronese varieties the inclusion is an equality, as shown in
\eqref{eq: Veronese triple identification}. In general it may be
strict: the example of
\Cref{prop: algebraic Schur square fails}
gives
\[
\left(
I_{\mathrm S}(P_{(4,2),3})^2
\right)_{(4,2)}
\subsetneq
\left(
I_{\mathrm S}(P_{(4,2),3})^{\langle2\rangle}
\right)_{(4,2)}.
\]
\end{remark}

The geometric Schur square gives the correct intrinsic description
of first-order vanishing, but it is not yet adapted to explicit
computations. In the next section, we introduce the slot-by-slot
algebra and study when its multigraded geometric double points recover
the component $\left(
I_{\mathrm S}(P)^{\langle2\rangle}
\right)_\lambda$.

\section{Slot-by-slot double points and the Consistency Theorem}
\label{sect: slotwise double points}

\noindent The geometric Schur square introduced in
\Cref{def: geometric Schur square} gives an intrinsic description of
first-order vanishing in the dual Schur algebra. Its definition,
however, is not immediately suited to explicit computations.

To obtain a more concrete model, we lift the problem to the product of
the Pl\"ucker spaces associated with the different column heights of
$\lambda$. Its multigraded section ring is a tensor product of
symmetric algebras, so geometric double points can be described by
ordinary multihomogeneous ideals. The Cartan projection then brings
these conditions back to the Schur component.

Throughout the remainder of the paper, if $Y$ is a projective variety
embedded in a multiprojective space and $Z\subseteq Y$ is a closed
subscheme, we denote by $I_{Z,Y}$ the multihomogeneous ideal of $Z$ in the multigraded homogeneous
coordinate ring of $Y$ induced by the chosen embedding. More precisely, for a point $P\in F_{\lambda,n}$, we shall compare
\[
\pi_\lambda
\left(
\left(
I_{P,X_\lambda}^{\langle2\rangle}
\right)_{\bd}
\right)
\subseteq
\S_\lambda V^*
\qquad
\text{with}
\qquad
\left(
I_{\mathrm S}(P)^{\langle2\rangle}
\right)_\lambda
=
\left(
\widehat T_PF_{\lambda,n}
\right)^\perp.
\]
The Consistency Theorem will characterize exactly when these two
spaces coincide.

\subsection{The multigraded slot-by-slot algebra}
\label{subsect: slot theory}

We retain the notation introduced in
\Cref{subsect: tableaux}. Thus,
$\lambda'
=
(n_k^{d_k},\ldots,n_1^{d_1})$,
$0<n_1<\cdots<n_k<n$,
$\bd=(d_1,\ldots,d_k)$.

\begin{definition}[The slot-by-slot algebra]
\label{def: slot-by-slot algebra}
We set
\[
X_\lambda
\coloneq
\prod_{i=1}^k
\P\left(
\superwedge^{n_i}V
\right)
\]
and define the \emph{slot-by-slot algebra associated with $\lambda$}
to be
\[
\cR_\lambda
\coloneq
\bigotimes_{i=1}^k
\Sym^\bullet
\left(
\superwedge^{n_i}V^*
\right).
\]
It is endowed with the product $\star$ induced by the ordinary
symmetric product in each tensor factor. The algebra $\cR_\lambda$ is naturally $\N^k$-graded. For $\mathbf m=(m_1,\ldots,m_k)\in\N^k$,
its multidegree-$\mathbf m$ component is $(\cR_\lambda)_{\mathbf m}
=
\bigotimes_{i=1}^k
\Sym^{m_i}
\left(
\superwedge^{n_i}V^*
\right)$.
Equivalently, $\cR_\lambda$ is the multigraded section ring of $X_\lambda$:
\[
\cR_\lambda
\simeq
\bigoplus_{\mathbf m\in\N^k}
H^0
\left(
X_\lambda,
\cO_{X_\lambda}(\mathbf m)
\right).
\]
\end{definition}

Via the product of the Pl\"ucker embeddings, we regard the underlying
partial flag variety of $F_{\lambda,n}$ as the incidence subvariety
of $X_\lambda$. We use the same letter
$P\in F_{\lambda,n}\subseteq X_\lambda$ for a flag and for its image in the product of the Pl\"ucker spaces. The component associated with the embedding of
$F_{\lambda,n}$ is
\[
(\cR_\lambda)_{\bd}
=
\bigotimes_{i=1}^k
\Sym^{d_i}
\left(
\superwedge^{n_i}V^*
\right).
\]
Its dual is $(\cR_\lambda^*)_{\bd}
=
\bigotimes_{i=1}^k
\Sym^{d_i}
\left(
\superwedge^{n_i}V
\right)$. With these identifications, the Cartan maps fixed in
\Cref{prop: cartan realization} take the form
\[
\iota_\lambda:
\S_\lambda V
\lhook\joinrel\longrightarrow
(\cR_\lambda^*)_{\bd},
\qquad
\pi_\lambda:
(\cR_\lambda)_{\bd}
\longrightarrow
\S_\lambda V^*.
\]

\begin{remark}[Multiplication before projection]
\label{rem: multiplication before projection}
The Cartan projection $\pi_\lambda$ is linear and
$G$-equivariant, but the Cartan projections do not, in general,
assemble into an algebra homomorphism from the slot-by-slot algebra to
the dual Schur algebra. Consequently, the two operations considered
in this paper are genuinely different:

\begin{itemize}
    \item the algebraic Schur square is formed using the product
    $\diamond$ in $\S^\bullet V^*$;
    \item the slot-by-slot square is formed using the product
    $\star$ in $\cR_\lambda$ and is projected only afterwards by
    $\pi_\lambda$.
\end{itemize}
\end{remark}

For $i\in[k]$, let
$\bepsilon_i
=
(0,\ldots,0,1,0,\ldots,0)
\in\N^k$
be the $i$-th standard multidegree. We call
\[
\Sym^\bullet
\left(
\superwedge^{n_i}V^*
\right)
\]
the \emph{$i$-th slot} of $\cR_\lambda$ and say that this slot has
\emph{length} $n_i$. If
$u\in\superwedge^{n_i}V^*,$
we write
\[
(u)_i
\coloneq
1\otimes\cdots\otimes1
\otimes u
\otimes1\otimes\cdots\otimes1
\in
(\cR_\lambda)_{\bepsilon_i},
\]
where $u$ occurs in the $i$-th tensor factor. We say that $(u)_i$ is
\emph{supported in the $i$-th slot}. In particular, for an increasing
set
$J\subseteq[n]$,
$|J|=n_i$,
we use the notation $(x_J)_i$.

\begin{definition}
\label{def: slot tableaux}
Fix a multidegree
$\mathbf m=(m_1,\ldots,m_k)\in\N^k$.
For each $i\in[k]$, an \emph{$i$-th slot tableau of degree $m_i$}
is a rectangular tableau consisting of $m_i$ columns of height $n_i$,
whose entries belong to $[n]$ and are strictly increasing down each
column. No row condition is imposed. If the column index sets of such a tableau $T_i$ are
$J_{i,1},\ldots,J_{i,m_i}$,
we associate with it the element
\[
x_{T_i}
\coloneq
x_{J_{i,1}}\cdots x_{J_{i,m_i}}
\in
\Sym^{m_i}
\left(
\superwedge^{n_i}V^*
\right).
\]
Moreover, if $T_i=\emptyset$ is the empty tableau, we set $x_\emptyset \coloneqq 1$. Since the $i$-th slot is a symmetric algebra, the order of these
columns does not affect $x_{T_i}$. To obtain a fixed representative,
we arrange columns of equal height in decreasing lexicographic order. Collecting the tableaux $T_1,\ldots,T_k$
and arranging their columns by decreasing height gives a tableau $T$
whose conjugate shape is
$(n_k^{m_k},\ldots,n_1^{m_1})$.
We call $T$ a \emph{slot tableau of multidegree $\mathbf m$}. The
associated \emph{slot monomial} is
\[
\widetilde{x}_T
\coloneq
x_{T_1}\otimes\cdots\otimes x_{T_k}
\in
(\cR_\lambda)_{\mathbf m}.
\]
\end{definition}

The monomial bases of the symmetric powers in the different tensor
factors show that
\[
\left\{
\widetilde{x}_T
\ \middle|\
T
\text{ is a slot tableau of multidegree }\mathbf m
\right\}
\]
is a basis of $(\cR_\lambda)_{\mathbf m}$. We call it the
\emph{slot-monomial basis}. We deliberately avoid the expression ``standard monomial basis'' for
this basis: standard monomial theory will enter later through the
straightening relations defining the incidence subvariety
$F_{\lambda,n}\subseteq X_\lambda$.

\begin{example}
\label{ex: slot tableau}
Let $V\simeq\C^5$, $\lambda=(4,3,1^2)$.
Then
$\lambda'=(4,2^2,1)$,
$n_1=1$, $n_2=2$, $n_3=4$, $\bd=(1,2,1)$.
Hence
\[
(\cR_\lambda)_{\bd}
=
V^*
\otimes
\Sym^2
\left(
\superwedge^2V^*
\right)
\otimes
\superwedge^4V^*.
\]
Consider the following slot tableaux:
\[
\begin{array}{c@{\qquad}c@{\qquad}ccc}
\text{slot }1
&
\text{slot }2
&
\text{slot }3
\\[2pt]
\begin{ytableau}
2
\end{ytableau}
&
\begin{ytableau}
1 & 1\\
4 & 3
\end{ytableau}
&
\begin{ytableau}
1\\
2\\
4\\
5
\end{ytableau}
&
\rightsquigarrow
&
T=
\begin{ytableau}
1 & 1 & 1 & 2\\
2 & 4 & 3\\
4\\
5
\end{ytableau}.
\end{array}
\]
The corresponding slot monomial is
\[
\widetilde{x}_T
=
x_2
\otimes
(x_1\wedge x_4)(x_1\wedge x_3)
\otimes
(x_1\wedge x_2\wedge x_4\wedge x_5).
\]
The tableau $T$ is not semistandard, since its second row decreases
from $4$ to $3$. This is allowed in the slot-by-slot algebra, where
only the columns are required to be strictly increasing.
\end{example}

When $\mathbf m=\bd$ and $T$ has shape $\lambda$, the notation
$\widetilde{x}_T$ agrees with
\Cref{def: tableau monomials}. In particular, if $S$ is semistandard,
then $\pi_\lambda
\left(
\widetilde{x}_S
\right)
=
\bx_S$.

\subsection{Contents and torus weights}
\label{subsect: torus weights}

The general weight-space notation needed here was introduced in
\Cref{def: content spaces}. We record its consequences for the
slot-by-slot algebra.

If $T$ is a slot tableau, its content $\cont(T)
=
(\nu_1,\ldots,\nu_n)$
is defined as in \Cref{def: tableau data}: the integer $\nu_a$ is the
total number of occurrences of $a$ in all the columns of $T$.

\begin{proposition}[Content compatibility]
\label{prop: slot content compatibility}
Let $\mathbf m\in\N^k
$ and let $T$ be a slot tableau of multidegree $\mathbf m$. Then
\[
\widetilde{x}_T
\in
(\cR_\lambda)_{\mathbf m}[\cont(T)].
\]
Moreover, the Cartan projection preserves contents:
\[
\pi_\lambda
\left(
(\cR_\lambda)_{\bd}[\nu]
\right)
\subseteq
\S_\lambda V^*[\nu]
\qquad
\text{for every }\nu\in\N^n.
\]
More generally, if $A\subseteq
(\cR_\lambda)_{\bd}$
is a $\mathbb T$-stable vector subspace, then
\[
\pi_\lambda
\left(
A[\nu]
\right)
=
\pi_\lambda(A)
\cap
\S_\lambda V^*[\nu].
\]
\end{proposition}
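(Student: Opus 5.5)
The argument is a direct weight computation followed by a formal consequence of equivariance; the plan has three steps.

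\textbf{Step 1: slot monomials are weight vectors.} Take $t=\operatorname{diag}(t_1,\ldots,t_n)\in\mathbb T$. The dual action gives $t\cdot x_a=t_a^{-1}x_a$ on $V^*$. Hence on $\superwedge^{n_i}V^*$ one has
\[
t\cdot x_J=\Bigl(\prod_{a\in J}t_a^{-1}\Bigr)x_J .
\]
The action of $G$ on each symmetric power and on the tensor product is by algebra automorphisms, that is, diagonally. So $t$ acts on the product $x_{J_{i,1}}\cdots x_{J_{i,m_i}}$ by the product of the corresponding characters, and likewise across the slots. Collecting exponents, $t\cdot\widetilde x_T=\prod_a t_a^{-\nu_a}\,\widetilde x_T$, where $\nu_a$ counts the columns of $T$ containing $a$. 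By \Cref{def: tableau data} this vector is exactly $\cont(T)$, which gives the first assertion.

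\textbf{Step 2: $\pi_\lambda$ preserves contents.} The map $\pi_\lambda$ is $G$-equivariant by \Cref{prop: cartan realization}, hence $\mathbb T$-equivariant. If $m\in(\cR_\lambda)_{\bd}[\nu]$, then
\[
t\cdot\pi_\lambda(m)=\pi_\lambda(t\cdot m)=t^{-\nu}\pi_\lambda(m),
\]
so $\pi_\lambda(m)\in\S_\lambda V^*[\nu]$. This is the observation recorded after \Cref{def: content spaces}; here it is applied to a map between two different $\mathbb T$-modules.

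\textbf{Step 3: the identity for a $\mathbb T$-stable subspace $A$.} The inclusion $\pi_\lambda(A[\nu])\subseteq\pi_\lambda(A)\cap\S_\lambda V^*[\nu]$ follows from Step 2. For the reverse inclusion, use that $A$ is $\mathbb T$-stable and $\mathbb T$ is diagonalizable, so $A=\bigoplus_\mu A[\mu]$. Take $y=\pi_\lambda(a)$ with $a\in A$ and $y$ of content $\nu$. Write $a=\sum_\mu a_\mu$ with $a_\mu\in A[\mu]$; then $y=\sum_\mu\pi_\lambda(a_\mu)$, and each $\pi_\lambda(a_\mu)$ lies in $\S_\lambda V^*[\mu]$ by Step 2. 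Content spaces for distinct $\mu$ are independent, since they are eigenspaces for distinct characters of $\mathbb T$, and $\mu\mapsto t^{-\mu}$ is injective on $\N^n$. So comparing components in $\S_\lambda V^*=\bigoplus_\mu\S_\lambda V^*[\mu]$ gives $y=\pi_\lambda(a_\nu)\in\pi_\lambda(A[\nu])$.

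No step is a real obstacle. The only point needing care is the convention in \Cref{def: content spaces}, which uses the inverse character for the dual. One must check that this matches $t\cdot x_a=t_a^{-1}x_a$, so that content equals the number of occurrences with a positive sign.
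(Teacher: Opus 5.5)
Your proposal is correct and follows essentially the same route as the paper's proof: you compute the torus character of $\widetilde{x}_T$ via $t\cdot x_a=t_a^{-1}x_a$, use the $\mathbb T$-equivariance of $\pi_\lambda$, and get the reverse inclusion by decomposing a preimage into content components and invoking uniqueness of the weight-space decomposition. Your explicit check that distinct contents give distinct characters, and that the sign convention of \Cref{def: content spaces} matches, is a small clarification the paper leaves implicit.
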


\begin{proof}
Let $t=\operatorname{diag}(t_1,\ldots,t_n)
\in\mathbb T$.
The dual action satisfies $t\cdot x_a=t_a^{-1}x_a$
and hence
\[
t\cdot x_J
=
\left(
\prod_{a\in J}t_a^{-1}
\right)x_J.
\]
It follows immediately that
$t\cdot\widetilde{x}_T
=
t_1^{-\nu_1}\cdots t_n^{-\nu_n}
\widetilde{x}_T$, $\nu=\cont(T)$,
which proves the first statement.

The map $\pi_\lambda$ is $G$-equivariant and therefore
$\mathbb T$-equivariant. It consequently preserves the content
spaces, proving the second statement.

Finally, let
$y
\in
\pi_\lambda(A)
\cap
\S_\lambda V^*[\nu]$.
Choose $x\in A$ such that
$\pi_\lambda(x)=y$.
Since $A$ is $\mathbb T$-stable, its content decomposition has the
form
$x=\sum_\eta x_\eta$,
$x_\eta\in A[\eta]$.
Applying $\pi_\lambda$ gives
$y
=
\sum_\eta
\pi_\lambda(x_\eta)$,
and each summand has content $\eta$. Since $y$ has content $\nu$,
uniqueness of the weight-space decomposition gives
$\pi_\lambda(x_\nu)=y$. 
Thus, $y\in\pi_\lambda(A[\nu])$,
which proves the reverse inclusion.
\end{proof}

\subsection{Geometric point ideals and straightening relations}
\label{subsect: point ideals and straightening}

Let $P\in F_{\lambda,n}\subseteq X_\lambda$ correspond to the flag
$W_1\subsetneq\cdots\subsetneq W_k\subsetneq V$, $\dim W_i=n_i$.

\begin{definition}
\label{def: point ideal in slot algebra}
For every $i\in[k]$, we identify $
\left(
\superwedge^{n_i}W_i
\right)^\perp
\subseteq
\superwedge^{n_i}V^*$
with its copy in
$(\cR_\lambda)_{\bepsilon_i}$. The multihomogeneous ideal of $P$ in $X_\lambda$ is
\[
I_{P,X_\lambda}
=
\left\langle
\left(
\superwedge^{n_i}W_i
\right)^\perp
\ \middle|\
i\in[k]
\right\rangle
\subseteq
\cR_\lambda.
\]
We call the elements of $
\left(
\superwedge^{n_i}W_i
\right)^\perp
\subseteq
(\cR_\lambda)_{\bepsilon_i}$
the \emph{degree-one generators in the $i$-th slot}. We denote by $I_{P,X_\lambda}^{\langle2\rangle}$
the multihomogeneous ideal of the geometric double point at
$P$.
\end{definition}

\begin{remark}
\label{rem: geom int of pilambda and straightening}
In the multigraded section ring of the product of projective spaces
$X_\lambda$, the ordinary square of the point ideal is already
saturated. {Indeed after, a multihomogeneous change of coordinates, we may assume
that $P$ is the coordinate point determined by the first coordinate
in every factor. Then $I_{P,X_\lambda}$ is generated by all the
remaining variables. Its square is a monomial ideal. A monomial not
belonging to $I_{P,X_\lambda}^2$ contains at most one of these
generators, and multiplying it by any power of the product of the
distinguished nonvanishing coordinates does not put it in the square.
Hence $I_{P,X_\lambda}^2$ is saturated with respect to the irrelevant
ideal.} Hence
$I_{P,X_\lambda}^{\langle2\rangle}
=
I_{P,X_\lambda}^{2}$.
We nevertheless use the notation
$I_{P,X_\lambda}^{\langle2\rangle}$ whenever we regard it as the
ideal of the geometric double point.

For the highest-weight point $P_{\lambda,n}$, corresponding to $
W_i^0
=
\langle e_1,\ldots,e_{n_i}\rangle$,
one has
\[
\left(
I_{P_{\lambda,n},X_\lambda}
\right)_{\bepsilon_i}
=
\left\langle
(x_J)_i
\ \middle|\
J\subseteq[n],\ |J|=n_i,\ J\neq[n_i]
\right\rangle.
\]
Therefore,
$\left(
I_{P_{\lambda,n},X_\lambda}^{\langle2\rangle}
\right)_{\bd}$
is spanned by the slot monomials containing at least two
non-highest-weight columns, counted with multiplicity.

Finally, the Cartan projection is the restriction map
\[
\pi_\lambda:
H^0
\left(
X_\lambda,\cO_{X_\lambda}(\bd)
\right)
\longrightarrow
H^0
\left(
F_{\lambda,n},\cO_{F_{\lambda,n}}(\bd)
\right),
\]
and therefore
\[
\ker(\pi_\lambda)
=
\left(
I_{F_{\lambda,n},X_\lambda}
\right)_{\bd}.
\]
\end{remark}

We recall the quadratic relations generating the ideal of
$F_{\lambda,n}$ in $X_\lambda$.

\begin{definition}
\label{def: straightening relations}
Let $1\leq r\leq s\leq k$,
and let $A=(a_1,\ldots,a_{n_r-1})$,
$B=(b_1,\ldots,b_{n_s+1})$
be increasing sequences in $[n]$. We set
\[
R_{A,B}^{r,s}
\coloneq
\sum_{t=1}^{n_s+1}
(-1)^{t-1}
\bigl(
x_{a_1,\ldots,a_{n_r-1},b_t}
\bigr)_r
\star
\bigl(
x_{b_1,\ldots,\widehat{b_t},\ldots,b_{n_s+1}}
\bigr)_s,
\]
where $\widehat{b_t}$ means that $b_t$ is omitted. As usual, an
exterior monomial containing a repeated index is understood to be
zero.
\end{definition}

\begin{proposition}
\label{prop: straightening Cartan kernel}
The multihomogeneous ideal
$I_{F_{\lambda,n},X_\lambda}
\subseteq
\cR_\lambda$
is generated by the relations
$R_{A,B}^{r,s}$, $1\leq r\leq s\leq k$.
Consequently,
\[
\ker(\pi_\lambda)
=
\operatorname{span}
\left\{
M\star R_{A,B}^{r,s}
\ \middle|\
M\in
(\cR_\lambda)_{\bd-\bepsilon_r-\bepsilon_s}
\right\},
\]
where all admissible choices of $r,s,A,B$ are understood, and a
multigraded component with a negative entry is zero. For $r=s$, these are the usual Pl\"ucker relations; for $r<s$, they
express the incidence relations between the corresponding
Grassmannian factors.
\end{proposition}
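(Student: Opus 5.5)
The statement has two parts. The first is that the ideal $I_{F_{\lambda,n},X_\lambda}$ is generated by the quadrics $R^{r,s}_{A,B}$. The second is the resulting description of $\ker(\pi_\lambda)$. I would prove the first part by appeal to the classical theory of partial flag varieties, and deduce the second formally from \Cref{rem: geom int of pilambda and straightening}.

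\emph{Step 1: the relations lie in the ideal.} Take a point of $F_{\lambda,n}$, that is, a flag $W_r\subseteq W_s$. Pick a basis $w_1,\dots,w_{n_s}$ of $W_s$ whose first $n_r$ vectors span $W_r$. Evaluate $R^{r,s}_{A,B}$ on $(w_1\wedge\dots\wedge w_{n_r},\,w_1\wedge\dots\wedge w_{n_s})$. The sum becomes the Laplace expansion along the last row of an $(n_s+1)\times(n_s+1)$ determinant. That matrix has the rows of $w_1,\dots,w_{n_s}$ in the columns $b_1,\dots,b_{n_s+1}$, together with one extra row. The extra row is the coordinate $x_{A,\bullet}$ of the vector $\sum_{j}\pm (\text{minor})\,e_j$ obtained by contracting $w_1\wedge\dots\wedge w_{n_r}$ with $x_A$. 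That vector lies in $W_r\subseteq W_s$. So the extra row is a combination of the other rows, and the determinant vanishes. This is the classical Sylvester–Plücker identity. For $r=s$ it gives the usual Plücker relations.

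\emph{Step 2: the relations generate the ideal.} Let $J$ be the ideal generated by the $R^{r,s}_{A,B}$. I would invoke standard monomial theory (Hodge, Lakshmibai–Musili–Seshadri; see \cite{Fulton1997}, Part III, §9). The straightening algorithm uses only the $R^{r,s}_{A,B}$. It rewrites every slot monomial in multidegree $\mathbf m$, modulo $J$, as a combination of slot monomials whose tableaux are semistandard. Hence $\dim(\cR_\lambda/J)_{\mathbf m}$ is at most the number of semistandard tableaux of the corresponding shape. By \Cref{prop: semistandard tableau basis} applied to that shape, this number is $\dim\S_{\mu(\mathbf m)}V^*$. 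The map from $(\cR_\lambda/J)_{\mathbf m}$ to $H^0(F_{\lambda,n},\cO(\mathbf m))\simeq\S_{\mu(\mathbf m)}V^*$ is surjective. Surjectivity holds because the semistandard $\bx_S$ are images of slot monomials. The bound then gives equality, so $J_{\mathbf m}=(I_{F_{\lambda,n},X_\lambda})_{\mathbf m}$ for all $\mathbf m$. This also shows that the homogeneous coordinate ring is the section ring, so no saturation issue arises.

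The main obstacle is Step 2, and specifically the termination of straightening when incidence relations with $r<s$ are involved. I would rely on the cited classical result rather than reprove it. If needed, I would sketch the usual argument: a violation of the row condition between adjacent columns is removed by a suitable $R^{r,s}_{A,B}$. The new terms are strictly larger in a lexicographic order on column words, and that order is well founded.

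\emph{Step 3: the kernel.} By \Cref{rem: geom int of pilambda and straightening}, $\ker\pi_\lambda=(I_{F_{\lambda,n},X_\lambda})_{\bd}$. Each generator $R^{r,s}_{A,B}$ is multihomogeneous of degree $\bepsilon_r+\bepsilon_s$. Therefore the degree-$\bd$ part of the ideal they generate is spanned by the products $M\star R^{r,s}_{A,B}$ with $M$ running over $(\cR_\lambda)_{\bd-\bepsilon_r-\bepsilon_s}$. This proves the displayed formula.
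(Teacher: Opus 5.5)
Your proposal is correct and takes essentially the same route as the paper. The paper simply cites the classical straightening description of the multihomogeneous ideal of a partial flag variety in \cite[Sections~8.4 and~9.1]{Fulton1997}, and reads off the kernel from the identification $\ker(\pi_\lambda)=(I_{F_{\lambda,n},X_\lambda})_{\bd}$ in \Cref{rem: geom int of pilambda and straightening}. Your Laplace-expansion check in Step~1 and your dimension count in Step~2 unpack that cited theory, and like the paper you rely on the classical fact that the relations $R^{r,s}_{A,B}$ (the $k=1$ exchange relations) suffice for straightening.
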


\begin{proof}
This is the classical straightening description of the ideal of a
partial flag variety in the product of its Pl\"ucker spaces; see
\cite[Sections~8.4 and~9.1]{Fulton1997}.
\end{proof}

\subsection{The Consistency Theorem}
\label{subsect: consistency}

We now characterize when the slot-by-slot geometric double point
recovers the geometric Schur square in the component corresponding to
the embedding.

\begin{definition}[Consistent embedding]
\label{def: consistent pair}
We say that the embedded flag variety
$F_{\lambda,n}
\subseteq
\P(\S_\lambda V)$
is \emph{consistent} if, for every $i\in[k]$ such that $d_i=1$, one
has $\min
\left\{
n_i-n_{i-1},
n_{i+1}-n_i
\right\}
\leq1$,
where
$n_0=0$,
$n_{k+1}=n$.
\end{definition}

\begin{remark}
\label{rem: combinatorial meaning consistency}
In terms of the Young diagram of $\lambda$, consistency means that no
column occurring with multiplicity one is separated by gaps of size at
least two from both the preceding and the following column heights.
Here the formal boundary heights $0$ and $n$ are also taken into
account.
\end{remark}

The following lemma provides the obstruction that appears when
consistency fails.

\begin{lemma}[The isolated-column obstruction]
\label{lem: isolated column obstruction}
Assume that $F_{\lambda,n}$ is not consistent. Then there exists an
index $i\in[k]$ such that $d_i=1$, $n_i-n_{i-1}\geq2$, $n_{i+1}-n_i\geq2$.
Set $r\coloneq n_i$.
Then $2\leq r\leq n-2$.

Let $S_{\mathrm{bad}}$ be the tableau obtained from the highest-weight
tableau $S_0$ by replacing its unique column of height $r$, whose
content is $[r]$, with the column of content $[r-2]\cup\{r+1,r+2\}$,
and leaving all other columns unchanged. Set $\mu
\coloneq
\cont(S_{\mathrm{bad}})$. Then, $S_{\mathrm{bad}}$ is semistandard and $\dist(S_{\mathrm{bad}},S_0)=2$.
Moreover, every tableau of shape $\lambda$ with strictly increasing
columns, no row condition, and content $\mu$ has the same column
contents as $S_{\mathrm{bad}}$. As a consequence, every slot monomial in
$(\cR_\lambda)_{\bd}$ with content $\mu$ has exactly one
non-highest-weight column.
\end{lemma}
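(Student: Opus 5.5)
The existence of $i$ is just the negation of \Cref{def: consistent pair}. From $n_i-n_{i-1}\geq2$ and $n_{i-1}\geq0$ we get $r\geq2$, and from $n_{i+1}-n_i\geq2$ with $n_{i+1}\leq n$ we get $r\leq n-2$. So the indices $r+1,r+2$ lie in $[n]$, and the column $[r-2]\cup\{r+1,r+2\}$ is a well-defined strictly increasing column of height $r$. Its difference from $[r]$ is $\{r+1,r+2\}$, and all other columns of $S_{\mathrm{bad}}$ are highest-weight. Hence $\dist(S_{\mathrm{bad}},S_0)=2$ directly from \Cref{def: tableau data}.

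\textbf{Semistandardness.} Since $d_i=1$, the modified column is the unique column of height $r$. In the ordering of $\lambda$ it sits to the right of all columns of height $n_{i+1},\ldots,n_k$ and to the left of those of heights $n_1,\ldots,n_{i-1}$. Rows $1,\dots,r-2$ are unchanged, so the only row conditions to check are in rows $r-1$ and $r$.
\begin{itemize}
\item \emph{Left neighbours.} Any column to the left has height at least $n_{i+1}\geq r+2$ and entries $a$ in row $a$. So rows $r-1,r$ read $r-1\leq r+1$ and $r\leq r+2$, which is fine.
\item \emph{Right neighbours.} Any column to the right has height at most $n_{i-1}\leq r-2$, so it does not reach rows $r-1,r$.
\end{itemize}
This is exactly where both gap conditions are used.

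\textbf{Uniqueness of column contents.} Write $\mu$ as follows: the multiplicity of $a$ is the number of columns of height $\geq a$, except that $r-1$ and $r$ lose one occurrence and $r+1$, $r+2$ gain one. Concretely, $\mu_a=\lambda_a$ for $a\notin\{r-1,r,r+1,r+2\}$, $\mu_{r-1}=\lambda_{r-1}-1$, $\mu_r=\lambda_r-1$, $\mu_{r+1}=\lambda_{r+1}+1$, $\mu_{r+2}=\lambda_{r+2}+1$. Here $\lambda_a$ is the number of columns of height $\geq a$, and the gaps give $\lambda_{r-1}=\lambda_r$ and $\lambda_{r+1}=\lambda_{r+2}=\lambda_r-1$.

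Let $R$ be any column-strict tableau of shape $\lambda$ with content $\mu$. I would argue by induction on heights, processing the columns from tallest to shortest.
\begin{itemize}
\item A column of height $h$ has $|J\setminus[h]|=|[h]\setminus J|$. The total excess $\sum_c|J^{(c)}_R\setminus[\lambda'_c]|$ is determined by the content: it is $\sum_a$ of the occurrences of $a$ in columns of height $<a$. Comparing with the count $\#\{c:\lambda'_c\geq a\}=\lambda_a$, the only entries of $\mu$ exceeding $\lambda_a$ are $r+1$ and $r+2$, each by one.
\item Entries $a$ with $\mu_a=\lambda_a$ and the ``missing'' entries must be forced. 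The cleanest route is a dominance/counting argument on prefixes. For each $t$, the number of entries $\leq t$ in $R$ is at most $\sum_c\min(t,\lambda'_c)=\sum_{a\leq t}\lambda_a$, with equality iff every column of height $\geq t$ contains $[t]$ and every shorter column lies in $[t]$.
\item For $t\leq r-2$, $\sum_{a\leq t}\mu_a=\sum_{a\leq t}\lambda_a$, so equality holds. Hence every column contains $[\min(t,\lambda'_c)]$, and in particular all columns contain $[\min(r-2,\lambda'_c)]$. Columns of height $\leq n_{i-1}\leq r-2$ are thus highest-weight.
\item For $t\geq r+2$, again $\sum_{a\leq t}\mu_a=\sum_{a\leq t}\lambda_a$. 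So every column of height $\geq r+2$ contains $[r+2]$, and beyond that all columns of height $\geq t$ contain $[t]$ for every $t$. Thus all columns of height $\geq n_{i+1}$ are highest-weight.
\item The remaining column has height $r$, contains $[r-2]$, and by content must contain $r+1,r+2$.
\end{itemize}

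Consequently every slot monomial of multidegree $\bd$ and content $\mu$ comes from a slot tableau whose columns are those of $S_{\mathrm{bad}}$, possibly reordered within slots, which does not change the monomial. It therefore has exactly one non-highest-weight column. The main obstacle I expect is making the prefix-count equality argument clean, specifically the equality case of $\#\{\text{entries}\leq t\}\leq\sum_c\min(t,\lambda'_c)$. Everything else is bookkeeping.
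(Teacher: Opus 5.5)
Your proposal is correct and follows the paper's proof essentially step for step: the same row check in rows $r-1$ and $r$ using both gap conditions, and the same prefix bound $N_R(q)\le\sum_c\min\{\lambda'_c,q\}$. Equality at every $q\le r-2$ and $q\ge r+2$ forces all columns other than the height-$r$ one to be highest-weight and that one to contain $[r-2]$. The only blemish is your aside that the total excess is determined by the content; this is false in general (for $\lambda=(2,1)$ and content $(1,1,1)$ the tableaux with columns $\{1,2\},\{3\}$ and $\{1,3\},\{2\}$ give excess $1$ and $2$). Since you never use it, you can simply drop it.
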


\begin{proof}
The inequalities $n_i-n_{i-1}\geq2$, $n_{i+1}-n_i\geq2$ imply that $\lambda$ has no columns of heights $r-1$ and $r+1$.
Therefore, every column to the left of the unique column of height
$r$ has height at least $r+2$, whereas every column to its right has
height at most $r-2$. In rows $r-1$ and $r$, every column to the left contains the entries
$r-1$ and $r$, whereas the modified column contains $r+1$ and
$r+2$. Since the columns to the right have height at most $r-2$,
there are no further entries in these rows. Thus, the row condition
is satisfied. The modified column is strictly increasing because its
content is $[r-2]\cup\{r+1,r+2\}$.
Hence $S_{\mathrm{bad}}$ is semistandard. Its modified column differs
from $[r]$ precisely by replacing $r-1$ and $r$ with $r+1$ and
$r+2$, and therefore $\dist(S_{\mathrm{bad}},S_0)=2$.

Let $R$ be a tableau of shape $\lambda$ with strictly increasing
columns and content $\mu$. For $q\in[n]$, let $N_R(q)$ denote the
number of entries of $R$ belonging to $[q]$. A column of height $h$
contains at most $\min\{h,q\}$ entries in $[q]$. Hence
\begin{equation}
\label{eq: prefix content bound}
N_R(q)
\leq
\sum_{c=1}^{\lambda_1}
\min\{\lambda'_c,q\}.
\end{equation}
The right-hand side is $N_{S_0}(q)$. Relative to $S_0$, the content $\mu$ has one fewer occurrence of
$r-1$ and $r$, and one more occurrence of $r+1$ and $r+2$.
Hence, since $\cont(R)=\mu$, equality holds in
\eqref{eq: prefix content bound} for every $q\leq r-2$ and $q\geq r+2$. The inequality in \eqref{eq: prefix content bound} is obtained by
adding the corresponding inequalities for the individual columns.
Therefore, whenever equality holds, each column of height $h$ contains
exactly $\min\{h,q\}$ entries in $[q]$.

If $h\leq r-2$, take $q=h$. A column of height $h$ then contains
exactly $h$ entries in $[h]$, and hence its content is $[h]$.
Similarly, if $h\geq r+2$, taking $q=h$ shows that every column of
height $h$ has content $[h]$. The unique column of height $r$ contains $[r-2]$. For $r>2$, this
follows by taking $q=r-2$; for $r=2$, it follows from the convention
$[0]=\emptyset$. Since there are no columns of heights $r-1$ and
$r+1$, all other columns have now been forced to be highest-weight
columns. The total content $\mu$ then forces the remaining two
entries of the column of height $r$ to be $r+1$ and $r+2$. Thus, $R$ has the same column contents as $S_{\mathrm{bad}}$. 

Since slot monomials in $(\cR_\lambda)_{\bd}$ are indexed by tableaux of
shape $\lambda$ with strictly increasing columns and no row condition,
the final assertion follows.
\end{proof}

\begin{theorem}[Consistency Theorem]
\label{thm: consistency}
Let $P\in F_{\lambda,n}\subseteq X_\lambda$.
Then
\[
\pi_\lambda
\left(
\left(
I_{P,X_\lambda}^{\langle2\rangle}
\right)_{\bd}
\right)
\subseteq
\left(
I_{\mathrm S}(P)^{\langle2\rangle}
\right)_\lambda
=
\left(
\widehat T_PF_{\lambda,n}
\right)^\perp.
\]
Moreover, equality holds if and only if the embedded flag variety
$F_{\lambda,n}$ is consistent.
\end{theorem}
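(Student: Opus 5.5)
By \Cref{rem: tangent conormal homogeneity} and the $G$-equivariance of $\pi_\lambda$, of $I_{P,X_\lambda}^{\langle2\rangle}$ and of $I_{\mathrm S}(P)^{\langle2\rangle}$, it suffices to treat $P=P_{\lambda,n}$. The equality on the right of the statement is \Cref{thm: tangential Schur apolarity}, so only the comparison with $(\widehat T_PF_{\lambda,n})^\perp$ is needed.

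For the inclusion I will use \Cref{rem: geom int of pilambda and straightening}. There $\pi_\lambda$ is the restriction map $H^0(X_\lambda,\cO(\bd))\to H^0(F_{\lambda,n},\cO(\bd))$, and $(I^{\langle2\rangle}_{P,X_\lambda})_{\bd}$ is spanned by slot monomials with at least two non-highest-weight columns. Each such column is a factor $(x_J)_i$ with $J\ne[n_i]$, and by \Cref{lem: minor vanishing order} it vanishes to order at least one at $P$ on the big cell. Hence the product vanishes to order at least two, so its restriction has zero first jet. By the first-jet characterization in the proof of \Cref{prop: tangent and conormal bases}, it lies in the conormal space.

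For the converse under consistency, \Cref{prop: tangent and conormal bases} reduces the task to showing $\bx_S\in\pi_\lambda((I^{\langle2\rangle}_{P,X_\lambda})_{\bd})$ for every semistandard $S$ with $\dist(S,S_0)\ge2$. Since $\bx_S=\pi_\lambda(\widetilde x_S)$, this is immediate when $S$ has at least two non-highest-weight columns. The remaining case is a single non-highest-weight column $C$ of height $r=n_i$ with $q:=|C\setminus[r]|\ge2$. I plan to reduce $q$ by induction, working modulo $\ker\pi_\lambda$ via \Cref{prop: straightening Cartan kernel}. The goal is to exchange one non-highest-weight entry $b\in C\setminus[r]$ with a suitable column, so that afterwards two columns are non-highest-weight. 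Consistency supplies the partner column, in three cases:
\begin{itemize}
\item if $d_i\ge2$, the partner is another column of height $r$, and I use a Plücker relation with $r=s$;
\item if $n_i-n_{i-1}=1$, the partner is a column of height $r-1$, and I use an incidence relation $R^{i-1,i}_{A,B}$ with $A=[r-2]$ and $B=C\cup\{\cdot\}$ chosen to push $b$ into the shorter column;
\item if $n_{i+1}-n_i=1$, the partner is a column of height $r+1$, and I use the analogous relation $R^{i,i+1}$.
\end{itemize}
The boundary cases are harmless. If $r=n-1$ then $q\le1$. If $r=1$ the column has $q\le1$, and $n_1=1$ covers the case $n_i-n_{i-1}=1$ with $n_{i-1}=0$.

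In each relation, after multiplying by the remaining highest-weight columns, the term $\pm\widetilde x_S$ appears. All other terms either have two non-highest-weight columns, and so lie in $I^{\langle2\rangle}$, or have a single non-highest-weight column of the same content but with strictly smaller defect $q$, or are zero. The latter terms are handled by the induction hypothesis, once they are straightened to semistandard form within a fixed content space, using \Cref{prop: slot content compatibility}. This bookkeeping step is the main obstacle. I must choose $A,B$ so that no term returns with the same single-column defect, and I must control the straightening of non-semistandard slot monomials. I would first attempt to handle this with an ordering on column contents by dominance, in the spirit of the prefix-count argument of \Cref{lem: isolated column obstruction}.

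For necessity, assume $F_{\lambda,n}$ is not consistent and take $S_{\mathrm{bad}}$ and $\mu$ from \Cref{lem: isolated column obstruction}. Then $\bx_{S_{\mathrm{bad}}}$ lies in the conormal space, since $\dist(S_{\mathrm{bad}},S_0)=2$, and it is nonzero. On the other hand, $(I^{\langle2\rangle}_{P,X_\lambda})_{\bd}[\mu]=0$, because every slot monomial of content $\mu$ has exactly one non-highest-weight column. This ideal is $\mathbb T$-stable, so \Cref{prop: slot content compatibility} gives $\pi_\lambda(I^{\langle2\rangle}_{\bd})\cap\S_\lambda V^*[\mu]=0$. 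Therefore $\bx_{S_{\mathrm{bad}}}$ is not in the image, and the inclusion is strict.
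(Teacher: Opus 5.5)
Your reduction to $P_{\lambda,n}$, the inclusion via vanishing orders, the reduction to a semistandard $S$ whose non-highest-weight entries lie in a single column $C$ of height $r=n_i$ with $q=|C\setminus[r]|\geq2$, and the necessity argument via \Cref{lem: isolated column obstruction} all match the paper. The gap is the sufficiency step under consistency, which is the heart of the theorem, and you leave it open. You never fix $A,B$ in the cases $d_i\geq2$ and $n_{i+1}=n_i+1$. Instead you propose an induction on $q$, straightening of non-semistandard monomials, and a dominance order, and none of this is carried out. As written, $\bx_S\in\pi_\lambda\bigl((I^{\langle2\rangle}_{P,X_\lambda})_{\bd}\bigr)$ is not established.

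The missing idea is a choice of relation that finishes in one step. Pick $a\in C\setminus[r]$. In the incidence case, insist on $a>r+1$; this is possible because $q\geq2$, and for $a=r+1$ the set $[r+1]\cup\{a\}$ is too small to define $R^{i,i+1}_{A,B}$. Take $A=C\setminus\{a\}$ and $B=[r]\cup\{a\}$ in $R^{i,i}_{A,B}$, respectively $B=[r+1]\cup\{a\}$ in $R^{i,i+1}_{A,B}$, and multiply by the remaining columns of $S$. The term with $b_t=a$ gives $\pm\widetilde{x}_S$, and the terms with $b_t\in C\setminus\{a\}$ vanish by a repeated index. Every other term is a product of the remaining columns with $x_{J_b}$ and $x_{K_b}$. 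Here $J_b=(C\setminus\{a\})\cup\{b\}$ is still non-highest-weight, because it keeps an entry larger than $r$. Also $K_b$ is the (highest-weight) partner column with $b$ replaced by $a$, hence non-highest-weight. So no term with a single non-highest-weight column ever appears, and no induction on $q$ is needed. No straightening is needed either, because $(I^{\langle2\rangle}_{P,X_\lambda})_{\bd}$ contains every slot monomial with two non-highest-weight columns, regardless of row conditions.

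Finally, your case $n_i-n_{i-1}=1$ with $i>1$ is vacuous. Since $q\geq2$, the entry of $C$ in row $r-1$ exceeds $r$. By the row condition, the adjacent column of height $r-1$ is then non-highest-weight, which contradicts the single-column assumption. Your proposed relation there, with $A=[r-2]$, would not even contain $\widetilde{x}_S$ when $r-1\in C$.
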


\begin{proof}
By homogeneity, it is enough to prove the statement at the
highest-weight point $P_{\lambda,n}
=
[\bv_{\lambda,n}]$.
Write $W_i^0
=
\langle e_1,\ldots,e_{n_i}\rangle$,
so that $P_{\lambda,n}$ corresponds to the standard flag $W_1^0
\subsetneq
\cdots
\subsetneq
W_k^0$.

We first prove the inclusion
\[
\pi_\lambda
\left(
\left(
I_{P_{\lambda,n},X_\lambda}^{\langle2\rangle}
\right)_{\bd}
\right)
\subseteq
\left(
\widehat T_{P_{\lambda,n}}F_{\lambda,n}
\right)^\perp.
\]
The ideal $I_{P_{\lambda,n},X_\lambda}^{\langle2\rangle}$
is the ideal of the geometric double point supported at
$P_{\lambda,n}$ in $X_\lambda$. Hence its elements vanish at
$P_{\lambda,n}$ together with their first derivatives. Their
restrictions to $F_{\lambda,n}$ therefore vanish to first order at
$P_{\lambda,n}$. Since $\pi_\lambda$ is the restriction map in
multidegree $\bd$, the asserted inclusion follows from
\Cref{thm: tangential Schur apolarity} and \Cref{rem: geometric meaning Schur square}.
Assume now that $F_{\lambda,n}$ is consistent. By
\Cref{prop: tangent and conormal bases}, it is enough to consider a
semistandard tableau $S$ of shape $\lambda$ satisfying $\dist(S,S_0)\geq2$
and prove that
\[
\bx_S
\in
\pi_\lambda
\left(
\left(
I_{P_{\lambda,n},X_\lambda}^{\langle2\rangle}
\right)_{\bd}
\right).
\]
Suppose first that the non-highest-weight entries of $S$ occur in at
least two distinct columns. Let these columns have contents $J_1$ and
$J_2$, and heights $n_i$ and $n_j$, respectively. We allow $i=j$,
provided that the two columns are distinct.

Since both columns are non-highest-weight,
$(x_{J_1})_i
\in
I_{P_{\lambda,n},X_\lambda}$,
$(x_{J_2})_j
\in
I_{P_{\lambda,n},X_\lambda}$.
Let
$M
\in
(\cR_\lambda)_{\bd-\bepsilon_i-\bepsilon_j}
$
be the slot monomial determined by all columns of $S$ except the two
selected ones, and set
\[
\beta
\coloneq
M\star(x_{J_1})_i\star(x_{J_2})_j.
\]
Then
$\beta
\in
\left(
I_{P_{\lambda,n},X_\lambda}^{\langle2\rangle}
\right)_{\bd}$.
Moreover, $\beta=\widetilde{x}_S$, and therefore $\pi_\lambda(\beta)=\bx_S$.

It remains to consider the case in which all non-highest-weight
entries of $S$ occur in a single column of height $n_i$. Let $J$ be
the content of this column. Since $\dist(S,S_0)\geq2$,
the set $J\setminus[n_i]$
contains at least two elements.

Suppose first that $d_i\geq2$. Then $S$ contains another column of
height $n_i$. Since all non-highest-weight entries occur in the column
with content $J$, this second column has content $[n_i]$. Choose $a\in J\setminus[n_i]$. For every $b\in[n_i]\setminus J$, set $J_b
\coloneq
(J\setminus\{a\})\cup\{b\}$, $K_b
\coloneq
([n_i]\setminus\{b\})\cup\{a\}$,
where the corresponding index sets are written in increasing order. Since $J\setminus[n_i]$ contains at least two elements, one has $J_b\neq[n_i]$. Moreover, $K_b\neq[n_i]$
because $a>n_i$. Hence
$(x_{J_b})_i,
(x_{K_b})_i
\in
I_{P_{\lambda,n},X_\lambda}$. Set $A
\coloneq
J\setminus\{a\}$, $B
\coloneq
[n_i]\cup\{a\}$.
After reordering the indices in the exterior factors, the
straightening relation
$R_{A,B}^{i,i}$ of
\Cref{def: straightening relations} has the form
\[
R_{A,B}^{i,i}
=
\varepsilon
(x_J)_i\star(x_{[n_i]})_i
+
\sum_{b\in[n_i]\setminus J}
\varepsilon_b
(x_{J_b})_i\star(x_{K_b})_i,
\]
where $\varepsilon,\varepsilon_b\in\{\pm1\}$.
The terms indexed by $b\in[n_i]\cap J$ vanish because their first
exterior factor contains a repeated index. Let $M'
\in
(\cR_\lambda)_{\bd-2\bepsilon_i}$
be the slot monomial determined by all columns of $S$ except the
column with content $J$ and one highest-weight column of height
$n_i$. By
\Cref{prop: straightening Cartan kernel}, $M'\star R_{A,B}^{i,i}
\in
\ker(\pi_\lambda)$.
Since $M'\star(x_J)_i\star(x_{[n_i]})_i
=
\widetilde{x}_S$,
applying $\pi_\lambda$ gives
\[
0
=
\varepsilon\bx_S
+
\pi_\lambda
\left(
M'
\star
\sum_{b\in[n_i]\setminus J}
\varepsilon_b
(x_{J_b})_i\star(x_{K_b})_i
\right).
\]
Set
\[
\beta'
\coloneq
-\varepsilon
M'
\star
\sum_{b\in[n_i]\setminus J}
\varepsilon_b
(x_{J_b})_i\star(x_{K_b})_i.
\]
Each summand in $\beta'$ is a product of two degree-one generators of
$I_{P_{\lambda,n},X_\lambda}$. Therefore,
$\beta'
\in
\left(
I_{P_{\lambda,n},X_\lambda}^{\langle2\rangle}
\right)_{\bd}$, $\pi_\lambda(\beta')=\bx_S$.

We now assume that $d_i=1$. Since $F_{\lambda,n}$ is consistent, one
has $n_i-n_{i-1}\leq1$ or $n_{i+1}-n_i\leq1$. 

Assume first that
$n_i-n_{i-1}\leq1$.
If $i=1$, then $n_i=1$, and a column of height $n_i$ cannot contain
two distinct non-highest-weight entries. If $i>1$, then
$n_{i-1}=n_i-1$.
A column of height $n_{i-1}$ is immediately to the right of the
column with content $J$. Since $J\setminus[n_i]$ contains at least
two elements, the entry in row $n_i-1$ of the column with content
$J$ is strictly greater than $n_i-1$. By the row condition, the
entry in the same row of the adjacent shorter column is at least as
large. Hence the shorter column is also non-highest-weight,
contradicting the assumption that all non-highest-weight entries of
$S$ occur in the column with content $J$. Thus, this case cannot
occur.

It remains to consider $n_{i+1}-n_i\leq1$.
If $i=k$, then $n-n_i=1$,
so there is only one integer strictly greater than $n_i$. Therefore,
a column of height $n_i$ cannot contain two distinct
non-highest-weight entries.

Suppose that $i<k$. Then
$n_{i+1}=n_i+1$.
Since $J\setminus[n_i]$ contains at least two elements and only one of
them can equal $n_i+1$, we may choose
$a\in J\setminus[n_i]$ with
$a>n_{i+1}$.
The adjacent column of height $n_{i+1}$ has content $[n_{i+1}]$,
because all non-highest-weight entries of $S$ occur in the column
with content $J$. For every $b\in[n_{i+1}]\setminus J$,
set
$J_b
\coloneq
(J\setminus\{a\})\cup\{b\}$, $K_b
\coloneq
([n_{i+1}]\setminus\{b\})\cup\{a\}$.
The set $J_b$ is different from $[n_i]$, since
$J\setminus[n_i]$ contains a non-highest-weight entry different from
$a$. Similarly,
$K_b\neq[n_{i+1}]
$
because it contains $a>n_{i+1}$. Thus,
$(x_{J_b})_i,
(x_{K_b})_{i+1}
\in
I_{P_{\lambda,n},X_\lambda}$. Set $A'
\coloneq
J\setminus\{a\}$, $B'
\coloneq
[n_{i+1}]\cup\{a\}$.
After reordering the indices in the exterior factors, the
straightening relation
$R_{A',B'}^{i,i+1}$ has the form
\[
R_{A',B'}^{i,i+1}
=
\varepsilon
(x_J)_i\star(x_{[n_{i+1}]})_{i+1}
+
\sum_{b\in[n_{i+1}]\setminus J}
\varepsilon_b
(x_{J_b})_i\star(x_{K_b})_{i+1},
\]
where $\varepsilon,\varepsilon_b\in\{\pm1\}$. Let $M''
\in
(\cR_\lambda)_{\bd-\bepsilon_i-\bepsilon_{i+1}}$
be the slot monomial determined by all columns of $S$ except the
column with content $J$ and the highest-weight column of height
$n_{i+1}$. By
\Cref{prop: straightening Cartan kernel},
 $M''\star R_{A',B'}^{i,i+1}
\in
\ker(\pi_\lambda)$.
Since
$
M''
\star
(x_J)_i
\star
(x_{[n_{i+1}]})_{i+1}
=
\widetilde{x}_S$,
applying $\pi_\lambda$ gives
\[
0
=
\varepsilon\bx_S
+
\pi_\lambda
\left(
M''
\star
\sum_{b\in[n_{i+1}]\setminus J}
\varepsilon_b
(x_{J_b})_i\star(x_{K_b})_{i+1}
\right).
\] 
Set
\[
\beta''
\coloneq
-\varepsilon
M''
\star
\sum_{b\in[n_{i+1}]\setminus J}
\varepsilon_b
(x_{J_b})_i\star(x_{K_b})_{i+1}.
\]
Every summand in $\beta''$ contains one degree-one generator of
$I_{P_{\lambda,n},X_\lambda}$ in each of the two indicated slots.
Therefore,
\[
\beta''
\in
\left(
I_{P_{\lambda,n},X_\lambda}^{\langle2\rangle}
\right)_{\bd},
\qquad
\pi_\lambda(\beta'')=\bx_S.
\]
This proves the reverse inclusion when $F_{\lambda,n}$ is consistent.

It remains to prove that equality fails when $F_{\lambda,n}$ is not
consistent. Let $S_{\mathrm{bad}}$ and $\mu$ be as in
\Cref{lem: isolated column obstruction}. Since
$\dist(S_{\mathrm{bad}},S_0)=2$,
\Cref{prop: tangent and conormal bases} gives
$
\bx_{S_{\mathrm{bad}}}
\in
\left(
\widehat T_{P_{\lambda,n}}F_{\lambda,n}
\right)^\perp$. The point $P_{\lambda,n}$ is fixed by the diagonal torus, so $\left(
I_{P_{\lambda,n},X_\lambda}^{\langle2\rangle}
\right)_{\bd}$
is torus-stable. By
\Cref{rem: geom int of pilambda and straightening}, this space is
spanned by slot monomials containing at least two
non-highest-weight columns. By
\Cref{lem: isolated column obstruction}, no such monomial has content
$\mu$. Hence
\[
\left(
I_{P_{\lambda,n},X_\lambda}^{\langle2\rangle}
\right)_{\bd}[\mu]
=
0.
\]
On the other hand,
$\widetilde{x}_{S_{\mathrm{bad}}}
\in
(\cR_\lambda)_{\bd}[\mu]$,
and
$
\bx_{S_{\mathrm{bad}}}
=
\pi_\lambda
\left(
\widetilde{x}_{S_{\mathrm{bad}}}
\right)$
is a nonzero element of $\S_\lambda V^*[\mu]$. If $\bx_{S_{\mathrm{bad}}}$ belonged to $\pi_\lambda
\left(
\left(
I_{P_{\lambda,n},X_\lambda}^{\langle2\rangle}
\right)_{\bd}
\right)$,
then
\Cref{prop: slot content compatibility}
would give a preimage in
$
\left(
I_{P_{\lambda,n},X_\lambda}^{\langle2\rangle}
\right)_{\bd}[\mu]$,
which is zero. This is a contradiction. Therefore,
\[
\bx_{S_{\mathrm{bad}}}
\in
\left(
\widehat T_{P_{\lambda,n}}F_{\lambda,n}
\right)^\perp
\setminus
\pi_\lambda
\left(
\left(
I_{P_{\lambda,n},X_\lambda}^{\langle2\rangle}
\right)_{\bd}
\right).
\]
Thus equality fails at $P_{\lambda,n}$ and, by homogeneity, at every
point of $F_{\lambda,n}$.
\end{proof}

\begin{corollary}[The two-step flag variety]
\label{cor: consistency two step flag}
Let $n\geq3$ and let $P\in F_{(2,1),n}$.
Then
\[
\pi_{(2,1)}
\left(
\left(
I_{P,X_{(2,1)}}^{\langle2\rangle}
\right)_{(1,1)}
\right)
=
\left(
I_{\mathrm S}(P)^{\langle2\rangle}
\right)_{(2,1)}
=
\left(
\widehat T_PF_{(2,1),n}
\right)^\perp.
\]
\end{corollary}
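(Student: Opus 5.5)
The plan is to deduce the corollary directly from \Cref{thm: consistency}. For $\lambda=(2,1)$ we have $\lambda'=(2,1)$, so in the block notation $k=2$, $n_1=1$, $n_2=2$, $d_1=d_2=1$ and $\bd=(1,1)$. This agrees with $X_{(2,1)}=\P(V)\times\P(\superwedge^2V)$ and with the $\cO(1,1)$-embedding. The hypothesis $n\geq3$ ensures that $\ell(\lambda)=2<n$, as required by \Cref{rem: determinant twist}. It also gives $0<n_1<n_2<n$, so the block form is legitimate.

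The key step is to check \Cref{def: consistent pair} for each index $i$ with $d_i=1$, which here means both $i=1$ and $i=2$. For $i=1$ we have $n_1-n_0=1-0=1\leq1$. For $i=2$ we have $n_2-n_1=2-1=1\leq1$. In each case the minimum in the definition is at most $1$, so $F_{(2,1),n}$ is consistent. The upper gap $n_3-n_2=n-2$ could be large, but that causes no problem because the lower gaps already satisfy the condition.

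The equality case of \Cref{thm: consistency} then gives the first equality, valid at every $P\in F_{(2,1),n}$. The second equality is \Cref{thm: tangential Schur apolarity}.

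No step is a real obstacle; the only care needed is the bookkeeping of heights. For readability, one could also trace the proof of \Cref{thm: consistency} in this case. A tableau at distance $\geq2$ either has two non-highest-weight columns, which is the direct product case. Otherwise it has a single height-$2$ column containing two entries $>2$. The row condition then forces the height-$1$ column to be non-highest-weight too, so this second case cannot occur. It follows that no straightening relation is even needed.
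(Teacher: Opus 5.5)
Your proposal is correct and follows the paper's argument: you check that $F_{(2,1),n}$ is consistent because $n_1-n_0=n_2-n_1=1$, then invoke the equality case of \Cref{thm: consistency}, together with \Cref{thm: tangential Schur apolarity} for the second equality. Your supplementary trace is also accurate: for shape $(2,1)$, every semistandard tableau at distance at least $2$ from $S_0$ has two non-highest-weight columns, so only the direct-product case of that proof is ever used and no straightening relation is needed.
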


\begin{proof}
For $\lambda=(2,1)$, one has $\lambda'=(2,1)$, $n_1=1$, $n_2=2$, $d_1=d_2=1$.
The column of height $1$ differs from the boundary height $0$ by one,
whereas the column of height $2$ differs from the preceding height
$1$ by one. Hence $F_{(2,1),n}$ is consistent, and the conclusion
follows from \Cref{thm: consistency}.
\end{proof}

\begin{remark}
\label{rem: computational role slotwise lift}
For every consistent embedding, the slotwise construction should be
viewed not as a replacement for Schur apolarity, but as a
multigraded realization of the $\lambda$-component of the geometric
Schur square:
\[
\pi_\lambda
\left(
\left(
I_{P,X_\lambda}^{\langle2\rangle}
\right)_{\bd}
\right)
=
\left(
I_{\mathrm S}(P)^{\langle2\rangle}
\right)_\lambda.
\]
Indeed, the point ideal in the slotwise algebra is generated by
linear equations in the individual slots, and the ideal of the
corresponding double point is computed by ordinary multihomogeneous
multiplication. In the family $F_{(2,1),n}$ considered below,
restrictions to the subflags entering the Horace argument are
likewise expressed by quotients of the same multigraded ring.
Thus explicit linear algebra and Horace-type arguments become
available without losing their Schur-apolar interpretation.
\end{remark}

\section{Low-dimensional defective flag varieties}
\label{sect: defectiveness}

\noindent We now apply the slot-by-slot realization to some low-dimensional
flag varieties. Besides providing the exceptional cases needed in the
classification of the secant varieties of $F_{(2,1),n}$, these
examples illustrate how the geometric Schur square can be used to
produce explicit equations for defective secant varieties.

\subsection{Two defective flags in dimension four}
We begin with the $\cO(1,1)$-embedding of $\Fl(1,2;V)$ for $\dim V=4$.

\begin{theorem}
\label{thm: def of F124}
Let $V\simeq\C^4$.
Then
$
F_{(2,1),4}
\simeq
\Fl(1,2;V)
\subseteq
\P(\S_{(2,1)}V)
\simeq
\P^{19}$
is defective. More precisely, $\sigma_2(F_{(2,1),4})$ has the
expected dimension, $\sigma_4(F_{(2,1),4})$ fills the ambient space,
and
$\dim\sigma_3(F_{(2,1),4})=16$.
In particular,
$
\delta_3(F_{(2,1),4})=1$.
The same holds for $F_{(2,2,1),4}$.
\end{theorem}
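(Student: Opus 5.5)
The plan is to apply the Schur Dual Terracini Lemma (\Cref{cor: schur dual terracini}) in the slot-by-slot form given by \Cref{cor: consistency two step flag}. For $n=4$ one has $\dim F_{(2,1),4}=1\cdot 3+1\cdot 2=5$ and $N_{(2,1),4}=19$. The expected dimensions are therefore $11$, $17$ and $19$ for $s=2,3,4$. Writing $h(s)$ for the dimension of the space of sections of $\cO(1,1)$ on $\Fl(1,2;V)$ singular at $s$ general points (\Cref{rem: geometric meaning Schur square}), we have $\dim\sigma_s=19-h(s)$. The claim thus amounts to
\[
h(2)=8,\qquad h(3)=3,\qquad h(4)=0 .
\]

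First I prove the upper bound $\dim\sigma_3\le 16$ by exhibiting three independent singular sections. Let the three general flags be $p_i\in L_i\subseteq\P^3$. Take the hyperplane $H\in|\cO(1,0)|$ spanned by $p_1,p_2,p_3$. The Pl\"ucker hyperplanes containing the three points $[L_i]\in\Gr(2,V)\subseteq\P^5$ form a $3$-dimensional space of sections $s\in H^0(\cO(0,1))$. Each product $H\cdot s$ vanishes at every $P_i$ in both factors, hence is singular there. It lies in $(I^{\langle2\rangle}_{P_i,X_\lambda})_{(1,1)}$ since it is a product of two degree-one generators. Its restriction to the flag variety is nonzero, because $\Fl(1,2)\to\P^3\times\Gr(2,4)$ is surjective onto each factor and the product is irreducible. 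Moreover $s\mapsto H\cdot s$ is injective, since $H\not\equiv0$ on the flag variety and the flag variety is irreducible. Hence $h(3)\ge 3$, giving $\dim\sigma_3\le16$ and defect at least one.

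Next come the matching lower bounds, which I would obtain by an explicit computation at specific points combined with semicontinuity. Specific points realizing a given value of $h$ bound $h$ at general points from above. I would first try torus-fixed coordinate flags $w\cdot P_{(2,1),4}$ with $w$ a permutation. At such points the conormal space is $\mathbb T$-stable, and by \Cref{prop: tangent and conormal bases} together with \Cref{rem: tangent conormal homogeneity} it is the translate of $\langle\bx_S\mid\dist(S,S_0)\ge2\rangle$. The intersections can then be computed weight space by weight space in $\S_{(2,1)}V^*$. These weight spaces have dimension $1$ for contents of type $(2,1,0,0)$, dimension $2$ for type $(1,1,1,0)$, and the straightening relations of \Cref{prop: straightening Cartan kernel} control the $2$-dimensional ones. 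If coordinate points turn out too special (giving $h$ too large), I would instead take explicit non-fixed flags, for example $\langle e_1+e_2+e_3+e_4\rangle\subset\cdots$. I would then compute the slot-by-slot double-point conditions in $V^*\otimes\superwedge^2V^*$ modulo $\ker\pi_{(2,1)}$ as a rank computation. The targets are: rank $12$ for two points (so $h(2)=8$), rank $16$ for three points (so $h(3)\le3$), and full rank $20$ for four points (so $h(4)=0$ and $\sigma_4$ fills).

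Finally, for $F_{(2,2,1),4}$ I would observe that $\S_{(2,2,1)}V\simeq\S_{(2,1)}V^*\otimes(\det V)^{\otimes 2}$, via the complement of $(2,1)$ in the $4\times2$ box. Thus $F_{(2,2,1),4}\simeq\Fl(2,3;V)\simeq\Fl(1,2;V^*)$ is projectively equivalent to $F_{(2,1),4}$ (\Cref{rem: determinant twist}), and all secant dimensions agree.

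The main obstacle is the exact lower bounds, above all $h(3)\le 3$ and $h(4)=0$. These require a concrete rank computation, and choosing points special enough to compute by hand yet general enough not to be degenerate is the delicate part. I expect torus-fixed points to suffice for $s=2$ but possibly not for $s=3,4$, where a mixed choice with one or two non-fixed flags will likely be needed.
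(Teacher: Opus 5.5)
Your plan follows the paper's proof closely. The witness space is the same $K=\langle\alpha\rangle\otimes W$ (your products $H\cdot s$), the lower bounds are likewise left to an explicit rank computation plus semicontinuity, and the passage to $F_{(2,2,1),4}$ uses the same duality $\S_{(2,2,1)}V\simeq\S_{(2,1)}V^*\otimes(\det V)^{\otimes2}$.

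The only real variation is the injectivity of $\pi_{(2,1)}$ on $K$. The paper argues by tensor rank: nonzero elements of $\ker\pi_{(2,1)}=\superwedge^3V^*$ have rank $3$ in $V^*\otimes\superwedge^2V^*$, while $K$ consists of pure tensors. You argue instead that $(\alpha\otimes\beta)|_{\Fl(1,2;V)}$ is a product of two nonzero sections on an integral variety, hence nonzero. This is correct and more conceptual, but you should add that a nonzero Pl\"ucker form does not vanish identically on $\Gr(2,V)$, by nondegeneracy of the Pl\"ucker embedding. Note that your argument needs every element of $K$ to be a pure tensor. That is exactly what fails for $W_2\otimes W_3$ in the $(2,2,1)$, $n=5$ case, which is why the paper develops the tensor-rank criterion.

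There is one concrete error: your computational target for three points. Since $h(s)=20-\rank\bigl(\sum_i\widehat T_{P_i}F_{(2,1),4}\bigr)$, proving $h(3)\le3$ requires rank $17$, not $16$. Rank $16$ only gives $h(3)\le4$, i.e.\ $\dim\sigma_3(F_{(2,1),4})\ge15$.

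The slip is not harmless, because $16$ is exactly the maximum rank attainable at torus-fixed flags. At coordinate flags the sum of tangent spaces splits into weight components, so one can count weight by weight:
\begin{itemize}
\item Covering all twelve weight lines of type $(2,1,0,0)$ forces the flags to be $\langle e_i\rangle\subset\langle e_i,e_d\rangle$ with a common $d$. This leaves only rank $3$ in the $(1,1,1,0)$-type planes, for a total of $15$.
\item Covering eleven of them forces two flags $\langle e_i\rangle\subset\langle e_i,e_d\rangle$ and $\langle e_j\rangle\subset\langle e_j,e_d\rangle$. These contribute the same vector $e_j\otimes e_i\wedge e_d+e_i\otimes e_j\wedge e_d$, so the total is at most $11+5=16$.
\item Covering at most ten gives at most $10+6=16$.
\end{itemize}
With your stated target you would stop at a coordinate configuration and draw a false conclusion. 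With the correct target of $17$ you are forced, as you anticipated, to use non-fixed flags; the paper does this by machine.

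For $s=2$ and $s=4$, coordinate flags do suffice. The opposite pair gives rank $12$, and the cyclic configuration $\langle e_i\rangle\subset\langle e_i,e_{i+1}\rangle$, $i\in\mathbb Z/4$, gives rank $20$.
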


\begin{proof}
By the standard duality
$\Fl(2,3;V)
\simeq
\Fl(1,2;V^*)$
and the isomorphism
$\S_{(2,2,1)}V
\simeq
\S_{(2,1)}V^*
\otimes
(\det V)^2$,
the embeddings of $F_{(2,2,1),4}$ and $F_{(2,1),4}$ are
projectively equivalent. Hence, it is enough to prove the statement
for $F_{(2,1),4}$.

The expected dimension of $\sigma_3(F_{(2,1),4})$ is $17$.
By the Schur Dual Terracini Lemma
(\Cref{cor: schur dual terracini}), for general points
$P_1,P_2,P_3\in F_{(2,1),4}$ one has
\[
\dim\sigma_3(F_{(2,1),4})
=
19-
\dim
\left(
\bigcap_{i=1}^{3}
\left(
I_{\mathrm S}(P_i)^{\langle2\rangle}
\right)_{(2,1)}
\right).
\]
Therefore, it is enough for the upper bound to exhibit a
three-dimensional subspace contained in $\bigcap_{i=1}^{3}
\left(
I_{\mathrm S}(P_i)^{\langle2\rangle}
\right)_{(2,1)}$.
A direct computation with a computer algebra software
(which can be checked by running \path{computations/5.1_theorem.m2} in \cite{Repo})
gives the lower bound $\dim\sigma_3(F_{(2,1),4})\geq16$.
Together with the three-dimensional subspace constructed below, this
implies $\dim\sigma_3(F_{(2,1),4})=16$
and hence $\delta_3(F_{(2,1),4})=1$.
The same computation verifies the assertions concerning
$\sigma_2(F_{(2,1),4})$ and $\sigma_4(F_{(2,1),4})$.

Let $P_i=(\ell_i\subset\Pi_i)
\in
F_{(2,1),4}$, $i=1,2,3$,
be general points. Choose vectors $v_i,w_i\in V$ such that $\ell_i=\langle v_i\rangle$,
$\Pi_i=\langle v_i,w_i\rangle$.
Since the points are general, the vectors $v_1,v_2,v_3$
are linearly independent. Complete them to a basis $\{v_1,v_2,v_3,v_4\}$
 of $V$, and let $\alpha\in V^*$
be the dual vector of $v_4$. Thus, $\alpha(v_i)=0$ for $i=1,2,3$. Set
\[
U
\coloneq
\left\langle
v_i\wedge w_i
\ \middle|\
i=1,2,3
\right\rangle
\subseteq
\superwedge^2V.
\]
Since the points $P_1,P_2,P_3$ are general, the vectors
$v_1\wedge w_1$, $v_2\wedge w_2$,
$v_3\wedge w_3$
are linearly independent, and hence
$\dim U=3$.
Define
\[
W
\coloneq
U^\perp
=
\left\{
\beta\in\superwedge^2V^*
\ \middle|\
\beta(v_i\wedge w_i)=0
\text{ for }i=1,2,3
\right\}.
\]
Since $\dim\superwedge^2V=6$,
it follows that
$\dim W=3$. Consider now the three-dimensional subspace
\[
K
\coloneq
\langle\alpha\rangle\otimes W
\subseteq
(\cR_{(2,1)})_{(1,1)}
=
V^*\otimes\superwedge^2V^*.
\]
For every $\beta\in W$ and every $i=1,2,3$, the linear forms
supported in the two slots satisfy
$(\alpha)_1
\in
I_{P_i,X_{(2,1)}}$, $(\beta)_2
\in
I_{P_i,X_{(2,1)}}$.
Indeed, $\alpha(v_i)=0$, $\beta(v_i\wedge w_i)=0$.
Therefore,
\[
(\alpha)_1\star(\beta)_2
\in
I_{P_i,X_{(2,1)}}^2
=
I_{P_i,X_{(2,1)}}^{\langle2\rangle}.
\]
Since every element of $K$ has this form, we obtain $K
\subseteq
\bigcap_{i=1}^3
\left(
I_{P_i,X_{(2,1)}}^{\langle2\rangle}
\right)_{(1,1)}$.
By
\Cref{cor: consistency two step flag},
\[
\pi_{(2,1)}(K)
\subseteq
\bigcap_{i=1}^3
\left(
I_{\mathrm S}(P_i)^{\langle2\rangle}
\right)_{(2,1)}.
\]

It remains to prove that the restriction of
$\pi_{(2,1)}$
to $K$ is injective. By the Littlewood--Richardson decomposition,
\[
V^*\otimes\superwedge^2V^*
\simeq
\S_{(2,1)}V^*
\oplus
\superwedge^3V^*,
\]
and hence $\ker(\pi_{(2,1)})
=
\superwedge^3V^*$. Every nonzero element of $K$ is a simple tensor
$\alpha\otimes\beta
\in
V^*\otimes\superwedge^2V^*$
and therefore has tensor rank one. On the other hand, since
$\dim V=4$, every nonzero element of $\superwedge^3V^*$ is
decomposable. Under the natural inclusion
\[
\superwedge^3V^*
\lhook\joinrel\longrightarrow
V^*\otimes\superwedge^2V^*,
\]
the element $u_1\wedge u_2\wedge u_3$
is mapped to
$u_1\otimes(u_2\wedge u_3)
-
u_2\otimes(u_1\wedge u_3)
+
u_3\otimes(u_1\wedge u_2)$,
which has tensor rank three. Consequently,
$K\cap\ker(\pi_{(2,1)})
=
\{0\}$.
Thus, $\pi_{(2,1)}|_K$ is injective and $\dim\pi_{(2,1)}(K)=3$.
The desired upper bound follows from the Schur Dual Terracini Lemma, and the
computational lower bound recorded above concludes the proof.
\end{proof}

\subsection{A defective flag in dimension five}

We next consider the $\cO(1,1)$-embedding of
$\Fl(2,3;V)$ for $\dim V=5$.

\begin{lemma}\label{lem: tensorrank 221}
Let $V\simeq\C^5$, let $\lambda=(2,2,1)$, and consider the projection
\[
\pi_\lambda:(\cR_\lambda)_{(1,1)}=\superwedge^2V^*\otimes\superwedge^3V^*\twoheadrightarrow\S_\lambda V^*.
\]
Then every nonzero element of $\ker(\pi_\lambda)$ has tensor rank at least $3$.
\end{lemma}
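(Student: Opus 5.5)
The plan is to identify $\ker(\pi_\lambda)$ as an explicit $\GL(V)$-submodule of $\superwedge^2V^*\otimes\superwedge^3V^*$, and then read tensor rank as the rank of a linear map. Recall that for a two-factor tensor space $A\otimes B$, the tensor rank of an element equals the rank of the associated linear map $A^*\to B$. Set $W\coloneq V^*$, so $\dim W=5$.

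\textbf{Step 1: reinterpret the ambient space.} Contraction against a fixed generator of $\superwedge^5W$ gives a $\GL$-equivariant isomorphism, up to a determinant twist,
\[
\superwedge^3W\simeq\superwedge^2W^*\otimes\superwedge^5W .
\]
Hence $\superwedge^2W\otimes\superwedge^3W\simeq\operatorname{End}(\superwedge^2W)\otimes\superwedge^5W$. Since the twist is one-dimensional, the tensor rank of an element equals the matrix rank of the corresponding endomorphism of $\superwedge^2W$.

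\textbf{Step 2: identify the kernel.} By Pieri's rule,
\[
\superwedge^2V^*\otimes\superwedge^3V^*\simeq\S_{(2,2,1)}V^*\oplus\S_{(2,1,1,1)}V^*\oplus\superwedge^5V^*,
\]
so $\dim\ker(\pi_\lambda)=24+1=25=\dim\Lie{gl}(W)$. The map
\[
\Lie{gl}(W)\longrightarrow\operatorname{End}(\superwedge^2W),\qquad \xi\longmapsto D_\xi,\qquad D_\xi(a\wedge b)=\xi a\wedge b+a\wedge\xi b,
\]
is $\GL$-equivariant. It is injective: on $\Lie{sl}(W)$ because it is a nonzero equivariant map from an irreducible module, and on scalars because $D_{\mathrm{Id}}=2\,\mathrm{Id}$. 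Its image is therefore a submodule isomorphic to $\C\oplus\Lie{sl}(W)$. After twisting, this is $\superwedge^5V^*\oplus\S_{(2,1,1,1)}V^*$; one should check that the twist of $\Lie{sl}(W)$ has the right highest weight, which is a routine weight comparison.

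The Cartan component $\S_{(2,2,1)}V^*$ occurs with multiplicity one, and none of the three summands are isomorphic. Hence the complement of the Cartan component, which is $\ker(\pi_\lambda)$, is exactly $\{D_\xi\mid\xi\in\Lie{gl}(W)\}$ under the identification of Step 1. I expect this identification to be the main obstacle: one must verify that the isomorphisms match the weights and that no other embedding of these isotypic pieces could occur. Multiplicity-freeness settles the second point.

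\textbf{Step 3: rank bound.} Let $\xi\neq0$. If $\xi=c\,\mathrm{Id}$, then $D_\xi=2c\,\mathrm{Id}$ has rank $10$. Otherwise choose $v\in W$ with $v$ and $\xi v$ linearly independent, and consider the linear map $u\mapsto D_\xi(v\wedge u)=\xi v\wedge u+v\wedge\xi u$. Compose it with the projection
\[
\superwedge^2W\to\superwedge^2(W/\langle v\rangle),
\]
which kills $v\wedge\xi u$. The composite is $\bar u\mapsto\overline{\xi v}\wedge\bar u$, where $\overline{\xi v}\neq0$ in the $4$-dimensional space $W/\langle v\rangle$. Wedging with a nonzero vector in a $4$-dimensional space has kernel $\langle\overline{\xi v}\rangle$, so this map has rank $3$. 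Therefore $\rank D_\xi\geq3$, and every nonzero element of $\ker(\pi_\lambda)$ has tensor rank at least $3$.
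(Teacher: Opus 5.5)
Your proof is correct. Steps 1 and 2 follow the paper's argument. There, a volume form identifies $\superwedge^2V^*\otimes\superwedge^3V^*$ with an endomorphism space of $\superwedge^2$: the paper uses $\operatorname{End}(\superwedge^2V)$, and your dual picture with $W=V^*$ makes no difference. One then recognizes $\ker(\pi_\lambda)$ as the image of $\xi\mapsto D_\xi$. Your concern about Step 2 is unnecessary: a $25$-dimensional submodule of a multiplicity-free module with summands of dimensions $75$, $24$, $1$ can only be the sum of the last two, so no weight comparison is needed.

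The real difference is in the rank bound. The paper splits into two cases:
\begin{itemize}
\item If $B$ has a nonzero eigenvalue, it uses a small combinatorial argument to count the nonzero sums $\mu_i+\mu_j$ among the eigenvalues of $D_B$.
\item If $B$ is nilpotent, it picks $e_2$ with $e_1=Be_2\neq0$. It then shows that $D_B$ is injective on $e_2\wedge W'$, for a $3$-dimensional complement $W'$ of $\langle e_1,e_2\rangle$, by passing to the quotient by $e_2\wedge V$.
\end{itemize}
Your argument is this second mechanism applied uniformly. Note that $\superwedge^2W/(v\wedge W)\simeq\superwedge^2(W/\langle v\rangle)$. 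In the paper, nilpotency is used only to guarantee that $e_2$ and $Be_2$ are independent. You observe that such a vector exists for every non-scalar $\xi$, and you handle scalars directly.

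This eliminates the eigenvalue case and is shorter. It also gives $\rank D_\xi\geq\dim W-2$ for every non-scalar $\xi$ in any dimension. As a side effect, it proves injectivity of $\xi\mapsto D_\xi$ without Schur's lemma. The paper's eigenvalue count gives spectral information about $D_B$, but the statement does not need it.
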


\begin{proof}
Fix a volume form $\omega\in \superwedge^5V\simeq\C$. It induces an isomorphism $\superwedge^3V^*\simeq\superwedge^2V$, and hence an identification
\[
\superwedge^2V^*\otimes\superwedge^3V^*\simeq\superwedge^2V^*\otimes\superwedge^2V\simeq\operatorname{End}(\superwedge^2V).
\]
Under this identification, tensor rank coincides with the rank of the corresponding endomorphism of $\superwedge^2V$. By the Littlewood--Richardson rule, the source decomposes as
\[
\superwedge^2V^*\otimes\superwedge^3V^*=\S_{(2,2,1)}V^*\oplus\S_{(2,1,1,1)}V^*\oplus\superwedge^5V^*,
\]
where the three summands have dimensions $75$, $24$, and $1$, respectively. Therefore,
\[
\ker(\pi_\lambda)=\S_{(2,1,1,1)}V^*\oplus\superwedge^5V^*.
\]
The chosen volume form makes the above identification
$\SL(V)$-equivariant. Consider the $\SL(V)$-equivariant map
\[
\Lie{sl}(V)
\longrightarrow
\operatorname{End}(\superwedge^2V),
\qquad
A\longmapsto D_A,
\]
where
\[
D_A(x\wedge y)
=
Ax\wedge y+x\wedge Ay.
\]
This map is nonzero and hence injective, since
$\Lie{sl}(V)$ is simple. Its image has dimension $24$ and therefore
coincides with the summand $\S_{(2,1,1,1)}V^*$.
Moreover,
\[
D_{t\operatorname{Id}_V}
=
2t\operatorname{Id}_{\superwedge^2V},
\]
so that the scalar endomorphisms correspond to the summand
$\superwedge^5V^*$. Consequently,
\[
\ker(\pi_\lambda)
=
\left\{
D_B
\ \middle|\
B\in\operatorname{End}(V)
\right\}.
\]
Thus, it is enough to prove that every nonzero endomorphism of the
form $D_B$ has rank at least $3$.

Let $\mu_1,\dots,\mu_5$ be the eigenvalues of $B$, counted with algebraic multiplicity. After triangularizing $B$, one sees that the eigenvalues of $D_B$
are the ten numbers $\mu_i+\mu_j$, for $1\leq i<j\leq5$. Suppose first that at least one of the $\mu_i$ is nonzero. We claim that at least three of the sums $\mu_i+\mu_j$, with $1\leq i<j\leq5$, are nonzero. Otherwise, at least eight of these ten sums would vanish. Since each index occurs in four pairs, there would exist an index $i$ such that $\mu_i+\mu_j=0$ for every $j\neq i$. Hence, $\mu_j=-\mu_i$ for every $j\neq i$. For any two distinct indices $j,k\neq i$, we would then have $\mu_j+\mu_k=-2\mu_i$. On the other hand, among the six pairs not containing $i$, at least four must have vanishing sum. Therefore, $-2\mu_i=0$, and hence $\mu_i=0$. It follows that $\mu_j=0$ for every $j$, contradicting our assumption. Thus, at least three eigenvalues of $D_B$ are nonzero, counted with algebraic multiplicity, and therefore $\rank(D_B)\geq3$.

It remains to consider the case in which all the eigenvalues of $B$ are zero. Then $B$ is nilpotent. Since $D_B$ is nonzero, we have $B\neq0$. Choose $e_2\in V$ such that $e_1:=Be_2\neq0$. Since $B$ is nilpotent, the vectors $e_1$ and $e_2$ are linearly independent. Choose a three-dimensional subspace $W\subset V$ such that $V=\langle e_1,e_2\rangle\oplus W$.
For every $f\in W$, we have
\[
D_B(e_2\wedge f)=e_1\wedge f+e_2\wedge Bf.
\]
We claim that the restriction of $D_B$ to $e_2\wedge W$ is injective. Indeed, suppose that $D_B(e_2\wedge f)=0$ for some $f\in W$. Passing to the quotient $\superwedge^2V/(e_2\wedge V)$ gives $e_1\wedge f=0$ modulo $e_2\wedge V$. This means that $e_1\wedge f$ belongs to $e_2\wedge V$, and hence $e_1,e_2,f$ are linearly dependent. Since $f\in W$ and $V=\langle e_1,e_2\rangle\oplus W$, it follows that $f=0$. Therefore, $D_B|_{e_2\wedge W}$ is injective. Since $\dim(e_2\wedge W)=3$, we conclude that $\rank(D_B)\geq3$.

The analysis above shows that every nonzero element of $\ker(\pi_\lambda)$ has operator rank, and hence tensor rank, at least $3$, concluding the proof.
\end{proof}

\begin{theorem}\label{thm: def of F235}
    Let $V\simeq\C^5$ and $\lambda=(2,2,1)$. $F_{(2,2,1),5}$ is defective. In particular, $\sigma_s(F_{(2,2,1),5})$ has the expected dimension for $s\le7$, $\sigma_{10}(F_{(2,2,1),5})$ fills the ambient space $\P(\S_{(2,2,1)} V)\simeq\P^{74}$, and $\sigma_8(F_{(2,2,1),5})$ and
$\sigma_9(F_{(2,2,1),5})$ both have defect $1$.
\end{theorem}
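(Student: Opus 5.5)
Throughout, $F_{(2,2,1),5}\simeq\Fl(2,3;V)$ has dimension $\sum_i(n_i-n_{i-1})(n-n_i)=2\cdot3+1\cdot2=8$, so each point contributes $9$ conditions. The ambient space is $\P^{74}$. Hence
\[
\expdim\sigma_s=\min\{9s-1,74\},
\]
which gives $\expdim\sigma_8=71$ and $\expdim\sigma_9=74$. The theorem therefore amounts to $\dim\sigma_8=70$ and $\dim\sigma_9=73$.

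The embedding is consistent, since $n_1=2$, $n_2=3$, $n_2-n_1=1$. By the Schur Dual Terracini Lemma (\Cref{cor: schur dual terracini}) together with the Consistency Theorem (\Cref{thm: consistency}), I would compute dimensions through
\[
\pi_\lambda\Bigl(\bigcap_i \bigl(I_{P_i,X_\lambda}^{\langle2\rangle}\bigr)_{(1,1)}\Bigr).
\]
The lower bounds are computational, as in \Cref{thm: def of F124}. A computer check at random rational points would give nondefectivity for $s\le7$, filling for $s=10$, and the bounds $\dim\sigma_8\ge70$ and $\dim\sigma_9\ge73$. By Terracini's Lemma, a single random instance with the right rank suffices for these lower bounds.

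For the upper bounds I would imitate the construction in \Cref{thm: def of F124}. Write $P_i=(\Pi_i\subset\Lambda_i)$ with $\dim\Pi_i=2$ and $\dim\Lambda_i=3$, and let
\[
U=\langle p_i\rangle\subseteq\superwedge^2V,\qquad p_i\in\superwedge^2V \text{ the Plücker vector of } \Pi_i,
\]
\[
U'=\langle q_i\rangle\subseteq\superwedge^3V,\qquad q_i\in\superwedge^3V \text{ the Plücker vector of } \Lambda_i.
\]
Then every $\alpha\in U^\perp\subseteq\superwedge^2V^*$ and every $\beta\in U'^\perp\subseteq\superwedge^3V^*$ lie in all the point ideals, in slots $1$ and $2$ respectively. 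Therefore
\[
K\coloneq U^\perp\otimes U'^\perp\subseteq\bigcap_i\bigl(I_{P_i,X_\lambda}^{\langle2\rangle}\bigr)_{(1,1)}.
\]
For $s=8$ general points, $\dim U^\perp=\dim U'^\perp=10-8=2$, so $\dim K=4$. For $s=9$, both spaces are $1$-dimensional and $\dim K=1$.

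The remaining step, and the main one, is injectivity of $\pi_\lambda|_K$. For $s=9$, $K$ is spanned by a single rank-one tensor. By \Cref{lem: tensorrank 221}, every nonzero element of $\ker\pi_\lambda$ has tensor rank at least $3$, so $\pi_\lambda(K)\ne0$ and $\dim\sigma_9\le74-1=73$. For $s=8$, every element of $K=A\otimes B$ with $\dim A=\dim B=2$ has tensor rank at most $2$, since it is a $2\times2$ matrix in suitable bases. Again by \Cref{lem: tensorrank 221}, $K\cap\ker\pi_\lambda=0$, so $\dim\pi_\lambda(K)=4$. This gives $\dim\sigma_8\le74-4=70$, one less than the expected $71$.

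Combining these upper bounds with the computational lower bounds yields defect exactly $1$ in both cases. The main obstacle is making sure the computational rank checks actually reach the stated values $70$ and $73$, and confirm nondefectivity for $s\le7$ and filling for $s=10$; I would record these computations in a Macaulay2 script, as for \Cref{thm: def of F124}. It is the rank bound of \Cref{lem: tensorrank 221} that makes $2\times2$ blocks of rank-one tensors survive the projection.
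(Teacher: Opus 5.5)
Your proposal is correct and follows the paper's proof essentially step for step. The lower bounds are computational: expected dimension for $s\le 7$, filling at $s=10$, and $\dim\sigma_8\ge 70$, $\dim\sigma_9\ge 73$. The upper bounds come from $U^\perp\otimes U'^\perp$, whose elements have tensor rank at most $2$ (resp.\ $1$) and lie in the slotwise double-point ideals. These are transported by the Consistency Theorem and shown to survive $\pi_\lambda$ via \Cref{lem: tensorrank 221}.
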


\begin{proof}
A direct computation at suitable explicit configurations (which can be checked by running \path{computations/5.3_theorem.m2} in \cite{Repo}) shows, by
semicontinuity, that $\sigma_s(F_{(2,2,1),5})$ has the expected
dimension for $s\leq 7$, that
$\sigma_{10}(F_{(2,2,1),5})$ fills the ambient space, and that
\[
\dim \sigma_8(F_{(2,2,1),5})\geq 70
\qquad\text{and}\qquad
\dim \sigma_9(F_{(2,2,1),5})\geq 73.
\] Since $\dim F_{(2,2,1),5}=8$ and $\P(\S_\lambda V)\simeq\P^{74}$, $\expdim \sigma_8(F_{(2,2,1),5})=71$ and $\expdim \sigma_9(F_{(2,2,1),5})=74$. It is worth noticing that
$\sigma_9(F_{(2,2,1),5})$ is expected to fill the ambient space
and is superabundant by $6$, so its defectivity does not follow
automatically from the defectivity of
$\sigma_8(F_{(2,2,1),5})$.

We first prove that $\dim \sigma_8(F_{(2,2,1),5})\leq 70$. Let $P_i=(\pi_i\subset H_i)\in F_{(2,2,1),5}$, for $i=1,\dots,8$, be general points. Choose vectors $v_i,w_i,z_i\in V$ such that $\pi_i=\langle v_i,w_i\rangle$ and $H_i=\langle v_i,w_i,z_i\rangle$. Set
\[
U_2:=\left\langle v_i\wedge w_i\mid i=1,\dots,8\right\rangle\subseteq\superwedge^2V,\qquad
U_3:=\left\langle v_i\wedge w_i\wedge z_i\mid i=1,\dots,8\right\rangle\subseteq\superwedge^3V.
\]
Since the points $P_1,\dots,P_8$ are general, the eight vectors $v_i\wedge w_i$ are linearly independent in $\superwedge^2V$, and the eight vectors $v_i\wedge w_i\wedge z_i$ are linearly independent in $\superwedge^3V$. Hence, $\dim U_2=\dim U_3=8$. Define
\[
W_2:=U_2^\perp\subseteq\superwedge^2V^*,\qquad W_3:=U_3^\perp\subseteq\superwedge^3V^*.
\]
Since $\dim\superwedge^2V=\dim\superwedge^3V=10$, we have $\dim W_2=\dim W_3=2$. Consider the $4$-dimensional subspace $K_8:=W_2\otimes W_3$ of $(\cR_\lambda)_{(1,1)}=\superwedge^2V^*\otimes\superwedge^3V^*$. We claim that {$K_8\subseteq\bigcap_{i=1}^8 \bigl(I_{P_i,X_\lambda}^{\langle 2\rangle}\bigr)_{(1,1)}$}. Indeed, let $\alpha\in W_2$ and $\beta\in W_3$. By construction, $\langle\alpha,v_i\wedge w_i\rangle=0$ and $\langle\beta,v_i\wedge w_i\wedge z_i\rangle=0$ for every $i=1,\dots,8$. Thus, the two linear forms $\alpha\otimes 1$ and $1\otimes\beta$ vanish at $P_i$, and therefore
\[
(\alpha\otimes 1)\star(1\otimes\beta)
=
\alpha\otimes\beta
\in
I_{P_i,X_\lambda}^2
=
I_{P_i,X_\lambda}^{\langle2\rangle}.
\]
Since $K_8$ is spanned by tensors of the form $\alpha\otimes\beta$, it follows that {$K_8\subseteq\bigcap_{i=1}^8 \bigl(I_{P_i,X_\lambda}^{\langle 2\rangle}\bigr)_{(1,1)}$}. By \Cref{thm: consistency}, we obtain $\pi_{(2,2,1)}(K_8)\subseteq\bigcap_{i=1}^8 \bigl(I_{\mathrm S}(P_i)^{\langle 2\rangle}\bigr)_{(2,2,1)}$.

It remains to prove that the restriction of $\pi_{(2,2,1)}$ to $K_8$ is injective. Since $\dim W_2=\dim W_3=2$, every element of $W_2\otimes W_3$ has tensor rank at most $2$. On the other hand, by \Cref{lem: tensorrank 221}, every nonzero element of $\ker(\pi_{(2,2,1)})$ has tensor rank at least $3$. Therefore, $K_8\cap\ker(\pi_{(2,2,1)})=\{0\}$. Hence, $\pi_{(2,2,1)}|_{K_8}$ is injective and $\dim\pi_{(2,2,1)}(K_8)=4$. The Schur Dual Terracini Lemma therefore gives
\[
\dim\sigma_8(F_{(2,2,1),5})
=74-\dim\left(\bigcap_{i=1}^8 \bigl(I_{\mathrm S}(P_i)^{\langle 2\rangle}\bigr)_{(2,2,1)}\right)
\leq 74-\dim\pi_{(2,2,1)}(K_8)
=70.
\]
Together with the computational lower bound $\dim\sigma_8(F_{(2,2,1),5})\geq70$, we get $\dim\sigma_8(F_{(2,2,1),5})=70$. Since its expected dimension is $71$, we conclude that $\delta_8(F_{(2,2,1),5})=1$.

We now prove that $\dim\sigma_9(F_{(2,2,1),5})\leq73$. Let $Q_i=(\rho_i\subset G_i)\in F_{(2,2,1),5}$, for $i=1,\dots,9$, be general points. Choose vectors $a_i,b_i,c_i\in V$ such that $\rho_i=\langle a_i,b_i\rangle$ and $G_i=\langle a_i,b_i,c_i\rangle$. Set
\[
U'_2:=\left\langle a_i\wedge b_i\mid i=1,\dots,9\right\rangle\subseteq\superwedge^2V,\qquad
U'_3:=\left\langle a_i\wedge b_i\wedge c_i\mid i=1,\dots,9\right\rangle\subseteq\superwedge^3V.
\]
Since the points are general, both spaces have dimension $9$. Therefore, their annihilators $W'_2:=(U'_2)^\perp\subseteq\superwedge^2V^*$ and $W'_3:=(U'_3)^\perp\subseteq\superwedge^3V^*$ are both one-dimensional. Consider the one-dimensional subspace $K_9:=W'_2\otimes W'_3\subseteq\superwedge^2V^*\otimes\superwedge^3V^*$. As above, for every $\alpha\in W'_2$ and $\beta\in W'_3$, we have $\langle\alpha,a_i\wedge b_i\rangle=0$ and $\langle\beta,a_i\wedge b_i\wedge c_i\rangle=0$ for every $i=1,\dots,9$. It follows that {$K_9\subseteq\bigcap_{i=1}^9\bigl(I_{Q_i,X_{(2,2,1)}}^{\langle 2\rangle}\bigr)_{(1,1)}$} and, by \Cref{thm: consistency},
\[
\pi_{(2,2,1)}(K_9)\subseteq\bigcap_{i=1}^9\bigl(I_{\mathrm S}(Q_i)^{\langle2\rangle}\bigr)_{(2,2,1)}.
\]
Every nonzero element of $K_9$ has tensor rank $1$, whereas every nonzero element of $\ker(\pi_{(2,2,1)})$ has tensor rank at least $3$ by \Cref{lem: tensorrank 221}. Therefore, $K_9\cap\ker(\pi_{(2,2,1)})=\{0\}$, and hence $\dim\pi_{(2,2,1)}(K_9)=1$. Applying again the Schur Dual Terracini Lemma, we obtain
\[
\dim\sigma_9(F_{(2,2,1),5})
=74-\dim\left(\bigcap_{i=1}^9 \bigl(I_{\mathrm S}(Q_i)^{\langle2\rangle}\bigr)_{(2,2,1)}\right)
\leq74-\dim\pi_{(2,2,1)}(K_9)
=73.
\]
Together with the computational lower bound $\dim\sigma_9(F_{(2,2,1),5})\geq73$, this gives $\dim\sigma_9(F_{(2,2,1),5})=73$. Since its expected dimension is $74$, we conclude that $\delta_9(F_{(2,2,1),5})=1$.

Thus, $\sigma_8(F_{(2,2,1),5})$ and $\sigma_9(F_{(2,2,1),5})$ both have defect $1$. Since $\sigma_{10}(F_{(2,2,1),5})$ fills the ambient space, the same
holds for every $s\geq10$. All the remaining secant varieties have the expected dimension, as claimed.
\end{proof}

\subsection{Computational classifications in dimensions four and five}
We conclude this section by recording the computational
classifications of the flag varieties embedded by
$\cO(\mathbf 1)$ in dimensions four and five.

\begin{theorem}
\label{thm: classification flags dimension four}
Let $V\simeq\C^4$.
All the partial flag varieties of $V$ embedded by
$\cO(\mathbf 1)$ are nondefective, with the exception of
\[
F_{(2,1),4}
\simeq
\Fl(1,2;V),
\qquad
F_{(2,2,1),4}
\simeq
\Fl(2,3;V),
\qquad
\text{and}
\qquad
{
F_{(2,1,1),4}
\simeq
\Fl(1,3;V).
}
\]
\end{theorem}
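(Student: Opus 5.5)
We first enumerate the partial flag varieties of $V\simeq\C^4$ embedded by $\cO(\mathbf 1)$. These correspond to the partitions $\lambda$ with $\ell(\lambda)<4$ whose conjugate has distinct column heights, each with $d_i=1$. The resulting list is:
\begin{itemize}
\item the Grassmannians $\P^3=F_{(1),4}$, $\Gr(2,4)=F_{(1,1),4}$ and $(\P^3)^*=F_{(1,1,1),4}$;
\item the two-step flags $F_{(2,1),4}$, $F_{(2,1,1),4}$ and $F_{(2,2,1),4}$, corresponding to $\Fl(1,2)$, $\Fl(1,3)$ and $\Fl(2,3)$;
\item the complete flag $F_{(3,2,1),4}=\Fl(1,2,3;V)$.
\end{itemize}
The projective spaces are trivially nondefective. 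The Grassmannian $\Gr(2,4)$ is a quadric hypersurface in $\P^5$, so $\sigma_2$ already fills the ambient space and nothing is defective. The cases $\Fl(1,2)$ and $\Fl(2,3)$ are settled by \Cref{thm: def of F124}. This leaves $\Fl(1,3;V)$, which we must show is defective, and the complete flag, which we must show is nondefective.

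For $F_{(2,1,1),4}\simeq\Fl(1,3;V)$ we have $\dim F=5$ and $\dim\S_{(2,1,1)}V=15$, so the ambient space is $\P^{14}$. The expected dimensions are $\expdim\sigma_2=11$ and $\expdim\sigma_3=14$. The embedding is the hyperplane section of $\P^3\times(\P^3)^*$ in the Segre embedding, i.e.\ the traceless $4\times4$ matrices of rank one. Rank-$\le2$ matrices have dimension $11$ in $\P^{15}$, and intersecting with the traceless hyperplane gives $\dim\sigma_2=10$, a defect of $1$. To stay inside the paper's framework we would instead use the slot-by-slot route. Here $\bd=(1,1)$ with $n_1=1$ and $n_2=3$, and $n_2-n_1=2$, $n-n_2=1$. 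Since $\min\{3-1,4-3\}=1$ and $n_1-n_0=1$, the embedding is consistent, so \Cref{thm: consistency} applies. For two general points $(\ell_i\subset H_i)$ we take $\alpha\in V^*$ vanishing on $\ell_1,\ell_2$, giving a $2$-dimensional space of choices, and $\beta\in\superwedge^3V^*\simeq V$ annihilating $H_1,H_2$, also $2$-dimensional. Then $\langle\alpha\rangle\otimes\langle\beta\rangle$ gives $4$ dimensions inside $\bigcap_i(I_{P_i,X}^{\langle2\rangle})_{(1,1)}$. The kernel of $\pi_\lambda$ is the one-dimensional trace (the $\superwedge^4$ summand), which is a rank-$4$ tensor, so the rank-$\le2$ space $W\otimes W'$ meets it trivially. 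This gives a conormal space of dimension $\ge4$ against the expected $3$, hence $\dim\sigma_2\le10$. For $\sigma_3$ the same analysis gives a $1\otimes1$-dimensional contribution, so $\sigma_3$ is expected to fill the space, consistent with the count of $3\times3$ minors. The lower bound $\dim\sigma_2\ge10$ would be obtained either from the matrix description above or by a computer check.

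For the complete flag $F_{(3,2,1),4}$ we have $\dim F=6$ and $\dim\S_{(3,2,1)}V=64$, so the ambient space is $\P^{63}$. Since the generic rank is $\lceil 64/7\rceil=10$, we must check $\sigma_s$ for $s\le10$. The plan is a direct computation, via the Schur Dual Terracini Lemma and the slot-by-slot realization, at explicit random (or rational) configurations of $s=9$ and $s=10$ points. By semicontinuity, it suffices to exhibit one configuration where the Terracini span has the expected dimension: $62$ for $s=9$, and filling for $s=10$, the latter needing $70\ge64$. Nondefectivity of the smaller $s$ then follows from that at $s=9$.

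The main obstacle is the case $s=9$ for the complete flag. It sits just below filling, with expected dimension $62$ and codimension $1$ in a space of dimension $63$, so a hidden defect of the $\Fl(1,2)$ type cannot be excluded a priori. This forces us to rely on an exact rank computation, which we would certify over a finite field or over $\mathbb Q$ with explicit points, in the spirit of the scripts cited in \cite{Repo}.
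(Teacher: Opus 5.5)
Your proof is correct. It agrees with the paper on $F_{(2,1),4}$ and $F_{(2,2,1),4}$ (both via \Cref{thm: def of F124}) and on the two projective spaces. It also agrees on the complete flag $F_{(3,2,1),4}$, where you and the paper both rest on an explicit rank computation. Your semicontinuity reduction there is sound; in fact the check at $s=9$ alone suffices, since $\sigma_{10}\neq\sigma_9$ whenever $\sigma_9$ is not the whole space.

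The routes differ in two places. First, the paper checks $\Gr(2,4)$ by computer, whereas you observe that it is a smooth quadric in $\P^5$, so $\sigma_2$ fills. Second, and more substantially, the paper obtains the defectivity of $F_{(2,1,1),4}\simeq\Fl(1,3;V)$ by citing Baur--Draisma, whereas you prove it, in two ways:
\begin{itemize}
\item through the model of traceless rank-one matrices in $\P(\Lie{sl}(V))$, which places $\sigma_2$ inside the $10$-dimensional hyperplane section of the rank-$\le 2$ locus (only this upper bound is needed, so the lower-bound remark can be dropped);
\item inside the paper's framework, by checking consistency and exhibiting the $4$-dimensional space $W\otimes W'\subseteq V^*\otimes\superwedge^3V^*$ of tensors of rank at most $2$. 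This space avoids $\ker\pi_{(2,1,1)}$, which is the rank-$4$ trace line, exactly as in the proof of \Cref{thm: def of F124}.
\end{itemize}
Your version makes the theorem independent of the external reference and of one script. It also shows that the slot-by-slot method alone detects this third defective case.

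One aside in your $\Fl(1,3;V)$ discussion is wrong, though it does not affect the statement. For three general points your construction still gives a nonzero space $W\otimes W'$, now spanned by a single rank-one tensor, and this again meets $\ker\pi_{(2,1,1)}$ trivially. Its image is therefore a nonzero conormal vector, so $\dim\sigma_3(F_{(2,1,1),4})\le 13<14=\expdim\sigma_3(F_{(2,1,1),4})$. Thus $\sigma_3$ does not fill: it lies in the determinant hypersurface of traceless matrices, whereas the $3\times3$ minors cut out $\sigma_2$. So $\sigma_3$ is defective as well, rather than merely ``expected to fill''.
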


\begin{proof}
The defectivity of $F_{(2,1),4}$
and of its dual variety
$F_{(2,2,1),4}
$
are proved in \Cref{thm: def of F124}. The defectivity of
$F_{(2,1,1),4}\simeq\Fl(1,3;V)$ follows from
\cite[Corollary~1.2]{Baur2004}. The cases $F_{(1),4}$ and $F_{(1,1,1),4}$ are immediate, since the associated flag varieties are isomorphic to their ambient projective spaces. The remaining cases can be checked by running the scripts \path{computations/5.4_theorem_flag_*.m2} provided in \cite{Repo}.
\end{proof}

\begin{theorem}
\label{thm: classification flags dimension five}
Let $V\simeq\C^5$.
All the partial flag varieties of $V$ embedded by
$\cO(\mathbf 1)$ are nondefective, with the exception of
\[
F_{(2,2,1),5}
\simeq
\Fl(2,3;V)
\qquad
\text{and}
\qquad
F_{(2,1,1,1),5}
\simeq
\Fl(1,4;V).
\]
\end{theorem}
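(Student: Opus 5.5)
The proof is an exhaustive enumeration combined with the duality reduction already used in \Cref{thm: def of F124}, the two theoretical inputs available for the exceptions, and a Terracini-type rank computation for every remaining case.

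\textbf{Enumeration and duality.} By \Cref{rem: determinant twist}, the $\cO(\mathbf 1)$-embedded partial flag varieties of $V\simeq\C^5$ correspond to the nonempty subsets $\{n_1<\cdots<n_k\}\subseteq\{1,2,3,4\}$, with all $d_i=1$. The partition $\lambda$ is then the one whose conjugate has one column of each height $n_i$, so there are $15$ cases. The isomorphism $\Fl(n_1,\ldots,n_k;V)\simeq\Fl(5-n_k,\ldots,5-n_1;V^*)$ is compatible with $\S_\lambda V\simeq\S_{\lambda^\vee}V^*\otimes(\det V)^{m}$. As in the proof of \Cref{thm: def of F124}, it identifies the two embeddings projectively. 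This leaves $9$ classes up to duality.

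\textbf{Trivial and known cases.} The classes $\{1\}\sim\{4\}$ are $\P(V)$ and $\P(V^*)$, so every secant variety is either expected or fills the space. For $\{2\}\sim\{3\}$, $\Gr(2,5)\subseteq\P^9$ has dimension $6$, and $\sigma_2$ fills $\P^9$ because a general skew form on $\C^5$ has rank $4$. The class $\{2,3\}\sim\{2,3\}$ is self-dual and is defective by \Cref{thm: def of F235}. For $\{1,4\}$, the variety $F_{(2,1,1,1),5}\simeq\Fl(1,4;V)$ in its $\cO(1,1)$-embedding is the minimal adjoint orbit in $\P(\Lie{sl}_5)$. Its defectivity should follow from Baur's results on higher secants of adjoint varieties \cite{Baur2004}, exactly as for $\Fl(1,3;\C^4)$ in \Cref{thm: classification flags dimension four}. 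Failing that, I would exhibit an explicit conormal vector at the first defective $s$ with the slot-by-slot method of \Cref{thm: def of F124}, after checking consistency via \Cref{thm: consistency}.

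\textbf{Remaining nondefective cases.} The remaining classes are $\{1,2\}\sim\{3,4\}$, $\{1,3\}\sim\{2,4\}$, $\{1,2,3\}\sim\{2,3,4\}$, $\{1,2,4\}\sim\{1,3,4\}$ and the full flag $\{1,2,3,4\}$. For each of them I would compute $\dim\sigma_s$ through \Cref{cor: schur dual terracini}. By \Cref{prop: tangent and conormal bases} and \Cref{rem: tangent conormal homogeneity}, the tangent space at $g\cdot P_{\lambda,n}$ is $g\cdot\langle\be_S\mid\dist(S,S_0)\le1\rangle$. I would take random $g_1,\ldots,g_s$ over a large prime field and compute the rank of the stacked tangent vectors for every $s$ from $1$ up to $\lceil (N_{\lambda,5}+1)/(\dim F_{\lambda,5}+1)\rceil$. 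Each rank is a lower bound for the true dimension by semicontinuity. Once it equals the expected dimension for every $s$, nondefectivity follows, since it already holds for $s$ up to the filling threshold.

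\textbf{Expected obstacle.} The main cost is the full flag, with $\lambda=(4,3,2,1)$, $\dim\S_\lambda V=1024$ and $\dim F=10$. There one needs about $94$ tangent spaces of dimension $11$, and the critical secant orders near filling must match exactly. I would handle this by working modulo a prime with sparse matrices. I would also lift the torus-weight structure of \Cref{def: content spaces} to choose points that keep the matrices as sparse as possible. This is the same computation as the scripts \path{computations/5.4_theorem_flag_*.m2} referenced in \cite{Repo}.
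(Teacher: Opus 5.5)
Your proposal is correct and follows essentially the same route as the paper. The paper cites \Cref{thm: def of F235} for $F_{(2,2,1),5}$ and \cite[Corollary~1.2]{Baur2004} for $F_{(2,1,1,1),5}$, treats $F_{(1),5}$ and $F_{(1,1,1,1),5}$ as immediate, and settles every remaining case by a Terracini rank computation; your duality reduction and your by-hand treatment of $\Gr(2,5)$ are harmless shortcuts within that scheme. One small correction: the scripts for $\dim V=5$ are \path{computations/5.5_theorem_flag_*.m2}, not the $5.4$ ones, which treat $\dim V=4$.
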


\begin{proof}
The defectivity of
$F_{(2,2,1),5}$
is proved in \Cref{thm: def of F235}. The defectivity of
$F_{(2,1,1,1),5}
\simeq
\Fl(1,4;V)$
follows from
\cite[Corollary~1.2]{Baur2004}. The cases $F_{(1),5}$ and $F_{(1,1,1,1),5}$ are immediate, since the associated flag varieties are isomorphic to their ambient projective spaces. The remaining cases can be checked by running the scripts \path{computations/5.5_theorem_flag_*.m2} provided in \cite{Repo}.
\end{proof}

The remainder of the paper is devoted to the family $F_{(2,1),n}
\simeq
\Fl(1,2;V_n)$,
for which we obtain the complete classification of the dimensions of
all secant varieties.

\section{Secant varieties of
\texorpdfstring{$F_{(2,1),n}$}{F(2,1;n)}}\label{sec: secants two step flags}

\noindent In this section, $V_n$ denotes an $n$-dimensional complex vector
space and $F_{(2,1),n}
\simeq
\Fl(1,2;V_n)$
denotes the flag variety embedded by $\cO(1,1)$ in $\P\left(\S_{(2,1)}V_n\right)$.
We also consider the incidence realization
$
F_{(2,1),n}
\subseteq
X_n$, $X_n
\coloneq
\P\left(\superwedge^2V_n\right)\times\P(V_n)$.

The order of the factors in $X_n$ is the reverse of the order
used for $X_{(2,1)}$ in the general slot-by-slot construction.
Throughout this section, we use the canonical isomorphism exchanging
the two factors and continue to denote the corresponding Cartan
projection by $\pi_{(2,1)}$.

The expected dimension of $\sigma_s(F_{(2,1),n})$ is
\[
\expdim\sigma_s(F_{(2,1),n})
=
\min
\left\{
\dim\S_{(2,1)}V_n,\,
s(2n-2)
\right\}
-1.
\]
We prove that, except for the defective cases analyzed in
\Cref{sect: defectiveness}, the secant variety
$\sigma_s(F_{(2,1),n})$ has the expected dimension.

For $P\in X_n$, the notation $I_{P,X_n}^{\langle2\rangle}$
denotes the ideal of the geometric double point supported at $P$. As recalled in
\Cref{rem: geom int of pilambda and straightening}, $I_{P,X_n}^{\langle2\rangle}
=
I_{P,X_n}^{2}$,
but we use angled brackets throughout this section in order to keep
the geometric meaning visible.

\subsection{Slotwise formulation and numerical reduction}
\label{subsect: slotwise formulation numerical reduction}

The following proposition translates the secant-dimension problem into
an interpolation problem for geometric double points in $X_n$.

\begin{proposition}
\label{terracini21}
{Let $n\geq3$ and $s\geq1$,} and let $P_1,\ldots,P_s \in F_{(2,1),n} \subseteq X_n$ be general points. Then
\[
\dim\sigma_s(F_{(2,1),n})
=
\dim\S_{(2,1)}V_n
-
\dim
\left(
\bigcap_{i=1}^s
\pi_{(2,1)}
\left(
\left(
I_{P_i,X_n}^{\langle2\rangle}
\right)_{(1,1)}
\right)
\right)
-1.
\]
\end{proposition}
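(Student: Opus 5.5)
The plan is to combine the Schur Dual Terracini Lemma (\Cref{cor: schur dual terracini}) with the Consistency Theorem in the form of \Cref{cor: consistency two step flag}. First I apply \Cref{cor: schur dual terracini} with $\lambda=(2,1)$ and $V=V_n$. For general points $P_1,\ldots,P_s\in F_{(2,1),n}$ this gives
\[
\dim\sigma_s(F_{(2,1),n})
=
N_{(2,1),n}
-
\dim
\left(
\bigcap_{i=1}^s
\left(
I_{\mathrm S}(P_i)^{\langle2\rangle}
\right)_{(2,1)}
\right),
\]
and $N_{(2,1),n}=\dim\S_{(2,1)}V_n-1$ by \Cref{def: embedded flag variety}. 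This already produces the constant term and the $-1$ in the statement.

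Next I replace each intersected space pointwise. By \Cref{cor: consistency two step flag}, which applies since $n\geq3$, every point $P\in F_{(2,1),n}$ satisfies
\[
\left(
I_{\mathrm S}(P)^{\langle2\rangle}
\right)_{(2,1)}
=
\pi_{(2,1)}
\left(
\left(
I_{P,X_{(2,1)}}^{\langle2\rangle}
\right)_{(1,1)}
\right).
\]
Because this equality holds for each individual point, the two intersections over $i=1,\ldots,s$ coincide as subspaces of $\S_{(2,1)}V_n^*$. No compatibility between the projection and the intersection is needed: I do not claim that $\pi_{(2,1)}$ commutes with intersections, only that each intersected space is literally the same subspace.

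The remaining step is purely bookkeeping: identifying $X_{(2,1)}=\P(V_n)\times\P(\superwedge^2V_n)$ with $X_n=\P(\superwedge^2V_n)\times\P(V_n)$ via the factor swap. This swap carries the multihomogeneous point ideal of $P$ to that of its image, preserves the double point ideal $I^{\langle2\rangle}=I^2$, and fixes the multidegree $(1,1)$. By the convention adopted at the start of this section, $\pi_{(2,1)}$ denotes the Cartan projection composed with this swap, so the projected spaces agree.

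I do not expect a genuine obstacle. The only points to check are the following:
\begin{itemize}
\item the hypothesis $n\geq3$ keeps $\ell(\lambda)=2<n$, which is needed for the Consistency corollary;
\item the generality of the $P_i$ is used only through Terracini's Lemma;
\item the swap of factors is compatible with the point ideals.
\end{itemize}
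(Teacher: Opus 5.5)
Your proposal is correct and matches the paper's proof: the paper likewise uses \Cref{cor: consistency two step flag} to identify each $\pi_{(2,1)}\bigl((I_{P_i,X_n}^{\langle2\rangle})_{(1,1)}\bigr)$ with $\bigl(I_{\mathrm S}(P_i)^{\langle2\rangle}\bigr)_{(2,1)}$ pointwise, and then invokes \Cref{cor: schur dual terracini} with $N_{(2,1),n}=\dim\S_{(2,1)}V_n-1$. Your remarks that $\pi_{(2,1)}$ need not commute with intersections, and on the factor swap, are accurate bookkeeping that the paper leaves implicit.
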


\begin{proof}
By \Cref{cor: consistency two step flag}, one has
\[
\pi_{(2,1)}
\left(
\left(
I_{P_i,X_n}^{\langle2\rangle}
\right)_{(1,1)}
\right)
=
\left(
I_{\mathrm S}(P_i)^{\langle2\rangle}
\right)_{(2,1)}
\]
for every $i=1,\ldots,s$. The statement therefore follows from
\Cref{cor: schur dual terracini}.
\end{proof}

{Thus, passing to $X_n$ is not a return to classical symmetric apolarity: the double-point conditions used below are the slotwise representatives, provided by consistency, of the Schur-apolar conormal spaces appearing in the Schur Dual Terracini Lemma.}

We next record the numerical data governing the critical secant
orders.

\begin{lemma}[Critical numerics]
\label{lemmanumerico}
Let
$n=6k+r$, $0\leq r\leq5$,
and set
\[
k_n
\coloneq
\left\lfloor
\frac{\dim\S_{(2,1)}V_n^*}{2n-2}
\right\rfloor,
\qquad
\delta_n
\coloneq
\dim\S_{(2,1)}V_n^*-k_n(2n-2).
\]
Then
\[
k_n
=
\begin{cases}
6k^2+k,       & r=0,\\
6k^2+3k,      & r=1,\\
6k^2+5k+1,    & r=2,\\
6k^2+7k+2,    & r=3,\\
6k^2+9k+3,    & r=4,\\
6k^2+11k+5,   & r=5,
\end{cases}
\qquad
\delta_n
=
\begin{cases}
0,       & r=0,\\
4k,      & r=1,\\
0,       & r=2,\\
0,       & r=3,\\
4k+2,    & r=4,\\
0,       & r=5.
\end{cases}
\]
Moreover,
$k_n-k_{n-6}
=
2n-5$
and
$
k_n-2k_{n-6}+k_{n-12}
=
12$.
\end{lemma}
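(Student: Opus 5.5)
The plan is to reduce everything to the closed formula
\[
\dim\S_{(2,1)}V_n^*=\frac{(n-1)n(n+1)}{3},
\]
which follows from the hook content formula, or from $V^*\otimes\superwedge^2V^*\simeq\S_{(2,1)}V^*\oplus\superwedge^3V^*$, giving $n\binom n2-\binom n3$. Dividing by $2n-2=2(n-1)$ gives
\[
\frac{\dim\S_{(2,1)}V_n^*}{2n-2}=\frac{n(n+1)}{6}.
\]
Hence $k_n=\lfloor n(n+1)/6\rfloor$. Writing $n(n+1)=6k_n+\rho_n$ with $0\le\rho_n\le5$, we get
\[
\delta_n=(2n-2)\cdot\frac{\rho_n}{6}=\frac{(n-1)\rho_n}{3}.
\]

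Next I will compute $\rho_n$ as a function of $r=n\bmod 6$. Since $n(n+1)\equiv r(r+1)\pmod 6$, the residues are $0,2,0,0,2,0$ for $r=0,\ldots,5$. When $\rho_n=0$ this gives $\delta_n=0$ and $k_n=n(n+1)/6$. Expanding $(6k+r)(6k+r+1)/6$ for $r=0,2,3,5$ yields $6k^2+k$, $6k^2+5k+1$, $6k^2+7k+2$ and $6k^2+11k+5$.

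For $r=1$ and $r=4$ we have $\rho_n=2$, so $k_n=(n(n+1)-2)/6$. Expanding gives $(36k^2+18k)/6=6k^2+3k$ and $(36k^2+54k+18)/6=6k^2+9k+3$. The corresponding defects are $\delta_n=2(n-1)/3$, namely $2\cdot 6k/3=4k$ and $2(6k+3)/3=4k+2$.

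For the difference identities, I use that $\rho_n$ depends only on $n\bmod 6$, so $\rho_n=\rho_{n-6}$. Therefore
\[
k_n-k_{n-6}=\frac{n(n+1)-(n-6)(n-5)}{6}=\frac{12n-30}{6}=2n-5.
\]
Applying this at $n$ and at $n-6$ gives $k_n-2k_{n-6}+k_{n-12}=(2n-5)-(2(n-6)-5)=12$. The second identity needs $n\ge12$ so that $k_{n-12}$ is defined (or the formula is extended to small indices). There is no real obstacle here; the only point needing care is the residue bookkeeping of $n(n+1)\bmod 6$.
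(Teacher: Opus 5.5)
Your proof is correct and follows the paper's approach: a direct computation from $\dim\S_{(2,1)}V_n^*=n(n^2-1)/3$. Cancelling the factor $n-1$ to get $k_n=\lfloor n(n+1)/6\rfloor$ and using the $6$-periodicity of $n(n+1)\bmod 6$ is tidier than the paper's expansion in $k$ and $r$. It also proves $k_n-k_{n-6}=2n-5$ and $k_n-2k_{n-6}+k_{n-12}=12$ for all residue classes at once, whereas the paper writes out only the case $r=0$.
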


\begin{proof}
One has
\[
\begin{aligned}
\dim\S_{(2,1)}V_n^*
&=
\frac{n(n^2-1)}{3}
\\
&=
\frac{
(6k+r)
\left(
36k^2+12kr+r^2-1
\right)
}{3}
\\
&=
72k^3
+
36k^2r
+
6kr^2
-
2k
+
\frac{r^3-r}{3},
\end{aligned}
\]
whereas
$2n-2
=
12k+2r-2$. For $r=0$, one has $n=6k$ and
\[
\dim\S_{(2,1)}V_n^*
=
72k^3-2k
=
(6k^2+k)(12k-2).
\]
Thus,
\[
k_n=6k^2+k,
\qquad
\delta_n=0.
\]
Furthermore,
\[
\begin{aligned}
k_n-k_{n-6}
&=
6k^2+k
-
\bigl(
6(k-1)^2+k-1
\bigr)
\\
&=
12k-5
=
2n-5,
\end{aligned}
\]
and
\[
\begin{aligned}
k_n-2k_{n-6}+k_{n-12}
&=
(k_n-k_{n-6})
-
(k_{n-6}-k_{n-12})
\\
&=
(2n-5)
-
\bigl(
2(n-6)-5
\bigr)
\\
&=
12.
\end{aligned}
\]
The other congruence classes are obtained by the same computation.
\end{proof}

\begin{remark}
\label{rem:reduction}
Fix $n,s\in\N_{>0}$,
and let
$P_1,\ldots,P_s
\in
F_{(2,1),n}$ be general points. We have
\[
\expdim\sigma_s(F_{(2,1),n})
=
\min
\left\{
\dim\S_{(2,1)}V_n,\,
s(2n-2)
\right\}
-1,
\]
and, by \Cref{terracini21},
\[
\dim\sigma_s(F_{(2,1),n})
=
\dim\S_{(2,1)}V_n
-
\dim
\left(
\bigcap_{i=1}^s
\pi_{(2,1)}
\left(
\left(
I_{P_i,X_n}^{\langle2\rangle}
\right)_{(1,1)}
\right)
\right)
-1.
\]
Thus,
\[
\dim\sigma_s(F_{(2,1),n})
=
\expdim\sigma_s(F_{(2,1),n})
\]
if and only if
\begin{equation}
\label{goal}
\dim
\left(
\bigcap_{i=1}^s
\pi_{(2,1)}
\left(
\left(
I_{P_i,X_n}^{\langle2\rangle}
\right)_{(1,1)}
\right)
\right)
=
\max
\left\{
0,\,
\dim\S_{(2,1)}V_n^*
-
s(2n-2)
\right\}.
\end{equation}
Since
\[
\codim_{\S_{(2,1)}V_n^*}
\pi_{(2,1)}
\left(
\left(
I_{P_i,X_n}^{\langle2\rangle}
\right)_{(1,1)}
\right)
=
2n-2,
\]
this means that these vector spaces meet as transversely as possible.

Now let $k_n$ and $\delta_n$ be as in
\Cref{lemmanumerico}, and let
\[
\Delta_n
\subseteq
\superwedge^2V_n^*\otimes V_n^*
\]
be a vector subspace such that
\[
\codim_{\superwedge^2V_n^*\otimes V_n^*}(\Delta_n)
=
\delta_n
\qquad
\text{and}
\qquad
\Delta_n
\supseteq
\left(
I_{P,X_n}^{\langle2\rangle}
+
I_{F_{(2,1),n},X_n}
\right)_{(1,1)}
\]
for some general point $
P\in F_{(2,1),n}$.
For $n\geq18$, whenever it is used in the inductive arguments
below, $\Delta_n$ is chosen in a special configuration compatible
with the induction, as described in \Cref{rem:construction}. If
\begin{equation}
\label{goal2}
\left(
\bigcap_{i=1}^{k_n}
\pi_{(2,1)}
\left(
\left(
I_{P_i,X_n}^{\langle2\rangle}
\right)_{(1,1)}
\right)
\right)
\cap
\pi_{(2,1)}(\Delta_n)
=
\{0\}
\end{equation}
for general points
$
P_1,\ldots,P_{k_n}
\in
F_{(2,1),n}$,
then, by a standard linear-algebra argument,
\eqref{goal} holds for every $s\in\N_{>0}$.
By
\Cref{rem: geom int of pilambda and straightening},
$
\ker\pi_{(2,1)}
=
\left(
I_{F_{(2,1),n},X_n}
\right)_{(1,1)}$.
Thus, \eqref{goal2} is in turn equivalent to
\begin{equation}
\label{goal3}
\left(
\bigcap_{i=1}^{k_n}
\left(
I_{P_i,X_n}^{\langle2\rangle}
+
I_{F_{(2,1),n},X_n}
\right)_{(1,1)}
\right)
\cap
\Delta_n
=
\left(
I_{F_{(2,1),n},X_n}
\right)_{(1,1)}.
\end{equation}
Hence, to prove that $
\sigma_s(F_{(2,1),n})$
is nondefective for every $s\in\N_{>0}$, it is enough to prove
\eqref{goal3}.
\end{remark}

\subsection{Inductive specializations and subflags}
\label{subsect: recursive specializations}

\begin{remark}
\label{rem:construction}
In the proofs of \Cref{lemL} and \Cref{goodpost}, we need to choose
$\Delta_n$ in some special configurations.
We give a simultaneous construction of a nonempty family of
subspaces satisfying both the two-subflag compatibility required in
\Cref{lemL} and the compatibility with the chosen subspace in
dimension $n-6$ required in \Cref{goodpost}. The base-case computations in \cite{Repo} also provide the initial compatible choices used below: on the prescribed coordinate subflags, the chosen $\Delta_{13}$ induces $\Delta_7$, and the chosen $\Delta_{16}$ induces $\Delta_{10}$.

Assume $n\geq18$, which is precisely the range in which this
construction is used. If $\delta_n>0$, then
$\delta_{n-12}>0$ by \Cref{lemmanumerico}. Let $L,M \subseteq F_{(2,1),n}$ be two general subvarieties isomorphic to $F_{(2,1),n-6}$, and let $P\in L\cap M$ be general. Denote by $V_L,V_M\subseteq V_n$ the corresponding $(n-6)$-dimensional vector subspaces, so that
\[
L=\Fl(1,2;V_L),
\qquad
M=\Fl(1,2;V_M),
\]
and set
\[
X_L
=
\P\left(\superwedge^2V_L\right)\times\P(V_L),
\qquad
X_M
=
\P\left(\superwedge^2V_M\right)\times\P(V_M).
\]
We want to construct a vector subspace $\Delta_n \subseteq \superwedge^2V_n^*\otimes V_n^* $ such that
\[
\Delta_n
\supseteq
\left(
I_{P,X_n}^{\langle2\rangle}
+
I_{F_{(2,1),n},X_n}
\right)_{(1,1)}
\qquad
\text{and}
\qquad
\codim_{\superwedge^2V_n^*\otimes V_n^*}(\Delta_n)
=
\delta_n,
\]
with the following additional properties:
\begin{enumerate}[leftmargin=*]
    \item
    $
    \codim_{\superwedge^2V_n^*\otimes V_n^*}
    \left(
    \Delta_n+(I_{L,X_n})_{(1,1)}
    \right)
    =
    \delta_{n-6}$;

    \item
    $
    \codim_{\superwedge^2V_M^*\otimes V_M^*}
    \left(
    \frac{
    \Delta_n+(I_{M,X_n})_{(1,1)}
    }{
    (I_{X_M,X_n})_{(1,1)}
    }
    \right)
    =
    \delta_{n-6}$;

    \item
    $
    \codim_{\superwedge^2V_M^*\otimes V_M^*}
    \left(
    \frac{
    \Delta_n+(I_{M,X_n})_{(1,1)}
    }{
    (I_{X_M,X_n})_{(1,1)}
    }
    +
    (I_{L\cap M,X_M})_{(1,1)}
    \right)
    =
    \delta_{n-12}$.
\end{enumerate}
If $\delta_n=0$, we take
\[
\Delta_n
=
\superwedge^2V_n^*\otimes V_n^*,
\]
and all the required conditions are immediate. Hence, we may assume
that $\delta_n>0$.
Let
$Z\subset F_{(2,1),n}$
be the geometric double point of $F_{(2,1),n}$ supported at $P$, regarded as a
subscheme of $X_n$. Since $\pi_{(2,1)}$ is the restriction map and
\[
\ker(\pi_{(2,1)})
=
\left(
I_{F_{(2,1),n},X_n}
\right)_{(1,1)},
\]
while
\Cref{cor: consistency two step flag} and \Cref{rem: geometric meaning Schur square}
give
\[
\pi_{(2,1)}
\left(
\left(
I_{P,X_n}^{\langle2\rangle}
\right)_{(1,1)}
\right)
=
H^0
\left(
F_{(2,1),n},
\cO_{F_{(2,1),n}}(1,1)
\otimes
\cI_{Z,F_{(2,1),n}}
\right),
\]
we obtain
\[
\begin{aligned}
\left(
I_{Z,X_n}
\right)_{(1,1)}
&=
\pi_{(2,1)}^{-1}
\left(
\pi_{(2,1)}
\left(
\left(
I_{P,X_n}^{\langle2\rangle}
\right)_{(1,1)}
\right)
\right)
\\
&=
\left(
I_{P,X_n}^{\langle2\rangle}
\right)_{(1,1)}
+
\ker(\pi_{(2,1)})
\\
&=
\left(
I_{P,X_n}^{\langle2\rangle}
+
I_{F_{(2,1),n},X_n}
\right)_{(1,1)}.
\end{aligned}
\]
Given a subscheme $\eta\subseteq Z$, one has
$I_{\eta,X_n}\supseteq I_{Z,X_n}$.
We shall choose below a subscheme
$\eta\subseteq Z$ of length $\delta_n$ and set
\[
\Delta_n
\coloneq
\left(
I_{\eta,X_n}
\right)_{(1,1)}.
\]
Since $\cO_{X_n}(1,1)$ separates first jets along
$F_{(2,1),n}$, the double point $Z$ imposes independent
conditions on $H^0(X_n,\cO_{X_n}(1,1))$. Hence the same holds
for every subscheme $\eta\subseteq Z$, and therefore
\[
\codim_{\superwedge^2V_n^*\otimes V_n^*}(\Delta_n)
=
\deg(\eta)
=
\delta_n.
\]
To translate the remaining three codimension conditions, for every
smooth subvariety
$Y\subseteq F_{(2,1),n}$
containing $P=[p]$, set
\[
T_PY
\coloneq
\widehat T_PY/\langle p\rangle.
\]
Thus, $T_PY$ is the Zariski tangent space of $Y$ at
$P$. The local algebra of the double point $Z$ is
\[
\cO_{Z,P}
\simeq
\C\oplus T_PF_{(2,1),n}^*,
\]
where $T_PF_{(2,1),n}^*$ is the cotangent space at
$P$, and the second summand is the square-zero maximal ideal.
Consequently, subschemes
$\eta\subseteq Z$ of length $\delta_n$ are in bijection, by
annihilation, with $(\delta_n-1)$-dimensional vector subspaces $U\subseteq T_PF_{(2,1),n}$.
Under the first-jet identification, for $Y\in\{L,M,L\cap M\}$, the image of $\left(I_{Y,X_n}\right)_{(1,1)}$ in
the cotangent space
$T_PF_{(2,1),n}^*$ is the conormal space
$\left(T_PY\right)^\perp$. Therefore,
\[
\codim
\left(
\left(I_{\eta,X_n}\right)_{(1,1)}
+
\left(I_{Y,X_n}\right)_{(1,1)}
\right)
=
1+\dim\left(U\cap T_PY\right)
=
\deg(\eta\cap Y).
\]
The analogous statement holds after restriction to $X_M$. Hence, the three codimension conditions above are respectively
equivalent to
\begin{enumerate}[leftmargin=*]
    \item
    $\deg(\eta\cap L)=\delta_{n-6}$;

    \item
    $\deg(\eta\cap M)=\delta_{n-6}$;

    \item
    $\deg(\eta\cap L\cap M)=\delta_{n-12}$.
\end{enumerate}
Therefore, finding $\eta$ with the required properties is equivalent
to finding $U
\in
\Gr
\left(
\delta_n-1, T_PF_{(2,1),n}
\right)$
such that
\begin{enumerate}[leftmargin=*]
    \item
    $
    \dim\bigl(U\cap T_PL\bigr)
    =
    \delta_{n-6}-1$;

    \item
    $
    \dim\bigl(U\cap T_PM\bigr)
    =
    \delta_{n-6}-1$;

    \item
$
    \dim\bigl(U\cap T_P(L\cap M)\bigr)
    =
    \delta_{n-12}-1$.
\end{enumerate}
For these subflags, set
\[
T_0
\coloneq
 T_P(L\cap M)
=
 T_PL\cap T_PM.
\]
Moreover,
$
\dim
\frac{ T_PL}{T_0}
=
\dim
\frac{ T_PM}{T_0}
=
12$,
and, by \Cref{lemmanumerico},
$
\delta_n-\delta_{n-6}
=
\delta_{n-6}-\delta_{n-12}
=
4$.

For the inductive applications in both \Cref{lemL,goodpost},
choose special subspaces
\[
\Delta_{n-6}^{L}
\subseteq
\superwedge^2V_L^*\otimes V_L^*,
\qquad
\Delta_{n-6}^{M}
\subseteq
\superwedge^2V_M^*\otimes V_M^*,
\]
compatible with the corresponding induction hypotheses and
inducing on the common subflag
$L\cap M\simeq F_{(2,1),n-12}$ the same fixed special subspace
$\Delta_{n-12}$. By homogeneity, we may assume that these subspaces
are supported at $P$. Let
$U_L\subseteq T_PL$ and $U_M\subseteq T_PM$
be the tangent subspaces corresponding, under the first-jet
identification, to $\Delta_{n-6}^{L}$ and
$\Delta_{n-6}^{M}$, respectively. Since these two subspaces induce
the same fixed subspace $\Delta_{n-12}$ on $X_{L\cap M}$, there is a
subspace $U'\subseteq T_0$ such that
\[
U_L\cap T_0
=
U'
=
U_M\cap T_0,
\qquad
\dim U'
=
\delta_{n-12}-1.
\]
Set $U
\coloneq
U_L+U_M$.
Since
$ T_PL\cap T_PM=T_0$, one has
$U_L\cap U_M=U'$. Therefore, by \Cref{lemmanumerico},
\[
\begin{aligned}
\dim U
&=
\dim U_L+\dim U_M-\dim U'
\\
&=
2(\delta_{n-6}-1)-(\delta_{n-12}-1)
\\
&=
\delta_n-1.
\end{aligned}
\]
Moreover,
\[
U\cap T_PL=U_L,
\qquad
U\cap T_PM=U_M,
\qquad
U\cap T_0=U'.
\]
Indeed, if $u_L+u_M\in T_PL$, with
$u_L\in U_L$ and $u_M\in U_M$, then $u_M\in
 T_PL\cap T_PM
=
T_0$,
and hence $u_M\in U'\subseteq U_L$. Thus
$u_L+u_M\in U_L$. The argument for $M$ is identical.

Let $\eta\subseteq Z$ be the subscheme corresponding, by
annihilation, to $U$, and set
\[
\Delta_n
\coloneq
\left(
I_{\eta,X_n}
\right)_{(1,1)}.
\]
Then $\Delta_n$ satisfies the three required codimension
conditions. Furthermore, the induced subspaces
\[
\Delta_n^{L}
\coloneq
\frac{
\Delta_n+(I_{L,X_n})_{(1,1)}
}{
(I_{X_L,X_n})_{(1,1)}
},
\qquad
\Delta_n^{M}
\coloneq
\frac{
\Delta_n+(I_{M,X_n})_{(1,1)}
}{
(I_{X_M,X_n})_{(1,1)}
}
\]
correspond respectively to
$U\cap T_PL=U_L$ and
$U\cap T_PM=U_M$. Consequently,
\[
\Delta_n^{L}
=
\Delta_{n-6}^{L},
\qquad
\Delta_n^{M}
=
\Delta_{n-6}^{M},
\]
and the further subspace induced on $X_{L\cap M}$ is exactly the
fixed special subspace $\Delta_{n-12}$.
Thus the same $\Delta_n$ satisfies the hypotheses of
\Cref{lemL} and induces on $X_L$ the fixed subspace to which the
induction hypothesis in \Cref{goodpost} applies.
The construction is not canonical: varying the compatible
lower-dimensional choices gives a nonempty family of admissible
subspaces $\Delta_n$. Throughout the remainder of this section,
whenever such a subspace is required, we fix one member of this
family. All subsequent semicontinuity arguments are applied with
$\Delta_n$ fixed and with only the support points varying.
\end{remark}

\begin{remark}[Intersections of subflags]
\label{flagintersec}
Let $F_{(2,1),n-a}$, $F_{(2,1),n-b}
\subseteq
F_{(2,1),n}$ be two subflags associated with vector subspaces $V_{n-a},V_{n-b}
\subseteq
V_n$ of codimensions $a$ and $b$, respectively. Thus, $F_{(2,1),n-a}
=
\left\{
(\ell,\Pi)\in F_{(2,1),n}
\ \middle|\
\Pi\subseteq V_{n-a}
\right\}$ and 
$F_{(2,1),n-b}
=
\left\{
(\ell,\Pi)\in F_{(2,1),n}
\ \middle|\
\Pi\subseteq V_{n-b}
\right\}$.
Consequently,
\[
F_{(2,1),n-a}
\cap
F_{(2,1),n-b}
=
\left\{
(\ell,\Pi)\in F_{(2,1),n}
\ \middle|\
\Pi\subseteq V_{n-a}\cap V_{n-b}
\right\}.
\]
If the two subflags are general, then the corresponding vector
subspaces are general and
\[
\codim_{V_n}
\left(
V_{n-a}\cap V_{n-b}
\right)
=
\min\{n,a+b\}.
\]
Therefore,
\[
F_{(2,1),n-a}
\cap
F_{(2,1),n-b}
=
\begin{cases}
F_{(2,1),n-a-b},
&
\text{if }a+b\leq n-2,
\\[2pt]
\emptyset,
&
\text{if }a+b>n-2.
\end{cases}
\]
\end{remark}

\begin{lemma}[The ideal of a subflag]
\label{lemma1}
Let $m,n\in\N_{>0}$, $n\geq3m$, and let $L
\simeq
F_{(2,1),n-m}
\subseteq
F_{(2,1),n}$ be the subflag associated with an $(n-m)$-dimensional vector subspace
$
V_L\subseteq V_n$.
Set
\[
X_L
\coloneq
\P\left(\superwedge^2V_L\right)
\times
\P(V_L)
\subseteq
X_n.
\]
Then
$I_{L,X_n}
=
I_{X_L,X_n}
+
I_{F_{(2,1),n},X_n}$.
\end{lemma}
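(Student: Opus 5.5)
We prove the two inclusions after a choice of coordinates adapted to $V_L$, reducing everything to the straightening description of \Cref{prop: straightening Cartan kernel} applied to $V_L$. Here $I_{L,X_n}$ is the multihomogeneous vanishing ideal of $L$, i.e. the ideal of all multihomogeneous forms vanishing on $L$.

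\emph{Setup and easy inclusion.} By $G$-homogeneity we may assume $V_L=\langle e_1,\ldots,e_{n-m}\rangle$. Then $I_{X_L,X_n}$ is generated by the linear forms $(x_a)$ in the $\P(V_n)$-slot and $(x_{\{i,j\}})$ in the $\P(\superwedge^2V_n)$-slot, for $a>n-m$ and $\{i,j\}\not\subseteq[n-m]$. The quotient $\cR/I_{X_L,X_n}$ is canonically the multigraded coordinate ring
\[
\cR_L\coloneq\Sym^\bullet\left(\superwedge^2V_L^*\right)\otimes\Sym^\bullet\left(V_L^*\right)
\]
of $X_L$. Set-theoretically $L=F_{(2,1),n}\cap X_L$, as recalled in \Cref{flagintersec}. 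Hence both $I_{X_L,X_n}$ and $I_{F_{(2,1),n},X_n}$ vanish on $L$, which gives the inclusion $\supseteq$.

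\emph{Reduction to $X_L$.} For $\subseteq$, we pass to the quotient by $I_{X_L,X_n}$. Since $I_{X_L,X_n}\subseteq I_{L,X_n}$, it suffices to show that
\[
\frac{I_{L,X_n}}{I_{X_L,X_n}}\subseteq\frac{I_{F_{(2,1),n},X_n}+I_{X_L,X_n}}{I_{X_L,X_n}}.
\]
The left-hand side is exactly the vanishing ideal $I_{L,X_L}\subseteq\cR_L$, because a form on $X_n$ vanishes on $L$ if and only if its restriction to $X_L$ does. By \Cref{prop: straightening Cartan kernel} applied to $V_L$ (note $\dim V_L=n-m\geq2$, so $L=\Fl(1,2;V_L)$ is a genuine two-step flag variety), $I_{L,X_L}$ is generated by the relations $R^{r,s}_{A,B}$ whose indices all lie in $[n-m]$. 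Each such relation is the image of the relation with the same indices in $\cR$, which lies in $I_{F_{(2,1),n},X_n}$. Hence $I_{L,X_L}$ is contained in the image of $I_{F_{(2,1),n},X_n}$, which gives $\subseteq$.

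\emph{Relations with indices outside $[n-m]$.} For completeness we also check that the image of $I_{F_{(2,1),n},X_n}$ in $\cR_L$ is contained in $I_{L,X_L}$. Equivalently, every generator $R^{r,s}_{A,B}$ of $I_{F_{(2,1),n},X_n}$ involving an index $c>n-m$ vanishes modulo $I_{X_L,X_n}$. If $c\in A$, every term contains $x_{A,b_t}$, which is a generator of $I_{X_L,X_n}$. If $c=b_{t_0}\in B$, the term with $t=t_0$ contains $x_{A,c}$, and every other term contains $x_{B\setminus b_t}\ni c$; all of these are generators of $I_{X_L,X_n}$. So the image of $I_{F_{(2,1),n},X_n}$ in $\cR_L$ is precisely the ideal generated by the straightening relations of $V_L$, that is $I_{L,X_L}$. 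Note that the hypothesis $n\geq3m$ enters only through nonemptiness and the ranges used later; the argument needs only $n-m\geq2$.

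\emph{Main obstacle.} The delicate point is the identification of ideals rather than zero loci. We need that the straightening relations for $V_L$ generate the full vanishing (saturated, radical) ideal of $L$ in $X_L$, and not merely an ideal with the right zero set. This is exactly the content of the classical result quoted in \Cref{prop: straightening Cartan kernel}, so it suffices to invoke it for $V_L$. We also need that pulling back a vanishing ideal along the quotient $\cR\to\cR_L$ gives the vanishing ideal in $\cR$; this holds because $X_L\subseteq X_n$ is a linear section whose ideal is generated by linear forms, so the quotient is literally the coordinate ring of $X_L$.
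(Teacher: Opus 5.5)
Your proof is correct and follows essentially the same route as the paper. Both arguments choose coordinates adapted to $V_L$ and pass to the quotient by $I_{X_L,X_n}$, which is the coordinate ring of $X_L$. Both then use the straightening description for $V_L$ to identify the image of $I_{F_{(2,1),n},X_n}$ with $I_{L,X_L}$, and pull back. Your explicit check that every relation involving an index outside $[n-m]$ vanishes modulo $I_{X_L,X_n}$ simply spells out the paper's remark that only the relations with indices in $[n-m]$ survive the quotient.
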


\begin{proof}
Choose a basis
$
\{e_1,\ldots,e_n\}$ of $V_n$ such that
$
V_L
=
\langle e_1,\ldots,e_{n-m}\rangle$,
and let
$
\{x_1,\ldots,x_n\}$
be the dual basis. We use the homogeneous coordinates
$
[x_1:\cdots:x_n]
$
on $\P(V_n)$ and the induced Pl\"ucker coordinates
$
[x_i\wedge x_j]_{1\leq i<j\leq n}$
on $\P(\superwedge^2V_n)$. The ideal of $X_L$ in $X_n$ is generated by the coordinates that
vanish upon restriction to $V_L$:
\[
I_{X_L,X_n}
=
\left\langle
x_j,\,
x_p\wedge x_q
\ \middle|\
n-m+1\leq j\leq n,\,
\{p,q\}\not\subseteq[n-m]
\right\rangle.
\]
The ideal of $F_{(2,1),n}$ in $X_n$ is generated by the Pl\"ucker
relations together with the incidence relations
\[
(x_j\wedge x_k)\otimes x_i
-
(x_i\wedge x_k)\otimes x_j
+
(x_i\wedge x_j)\otimes x_k,
\qquad
1\leq i<j<k\leq n.
\]
Equivalently,
\[
\begin{aligned}
I_{F_{(2,1),n},X_n}
={}&
\left\langle
(x_j\wedge x_k)\otimes x_i
-
(x_i\wedge x_k)\otimes x_j
+
(x_i\wedge x_j)\otimes x_k
\right\rangle_{1\leq i<j<k\leq n}
\\
&+
I_{\Gr(2,V_n)\times\P(V_n),X_n}.
\end{aligned}
\]
After quotienting by $I_{X_L,X_n}$, all coordinates involving an
index greater than $n-m$ vanish. Hence the surviving Pl\"ucker and
incidence relations are exactly those defining
\[
L
=
F_{(2,1),n-m}
\subseteq
X_L.
\]
Therefore,
\[
\frac{
I_{X_L,X_n}
+
I_{F_{(2,1),n},X_n}
}{
I_{X_L,X_n}
}
=
I_{L,X_L}.
\]
Let
\[
\alpha:
\cR_{(2,1)}
\longrightarrow
\frac{\cR_{(2,1)}}{I_{X_L,X_n}}
\]
be the quotient map. Since $\ker(\alpha)=I_{X_L,X_n}$,
we obtain
\[
\begin{aligned}
I_{L,X_n}
&=
\alpha^{-1}(I_{L,X_L})
\\
&=
\alpha^{-1}
\left(
\alpha
\left(
I_{X_L,X_n}
+
I_{F_{(2,1),n},X_n}
\right)
\right)
\\
&=
I_{X_L,X_n}
+
I_{F_{(2,1),n},X_n}.
\end{aligned}
\]
\end{proof}

\subsection{The three-subflag calculation}
\label{subsect: three subflag calculation}

\begin{lemma}[The $432$-dimensional residual space]
\label{lemmabasis}
Let $n\geq18$, and let $L,M,N\subseteq F_{(2,1),n}$ be three general subflags isomorphic to $F_{(2,1),n-6}$.
For $J\in\{L,M,N\}$, let $V_J\subseteq V_n$ be the corresponding
$(n-6)$-dimensional vector subspace. Let $\{\lambda_1,\ldots,\lambda_6\}$, $\{\mu_1,\ldots,\mu_6\}$, $\{\eta_1,\ldots,\eta_6\}$ be bases of $V_L^\perp$, $V_M^\perp$, and $V_N^\perp$, respectively.
For $1\leq i,j,k\leq6$, set
\[
\begin{aligned}
u_{ijk}
&\coloneq
\mu_j\wedge\eta_k\otimes\lambda_i
-
\eta_k\wedge\lambda_i\otimes\mu_j,
\\
v_{ijk}
&\coloneq
\eta_k\wedge\lambda_i\otimes\mu_j
-
\lambda_i\wedge\mu_j\otimes\eta_k.
\end{aligned}
\]
Then
$
\left(
I_{L\cup M\cup N,X_n}
\right)_{(1,1)}
=
T
\oplus
\left(
I_{F_{(2,1),n},X_n}
\right)_{(1,1)}$,
where $T\subseteq\S_{(2,1)}V_n^*$ has dimension $432$ and
$
\mathcal B
\coloneq
\left\{
u_{ijk},v_{ijk}
\ \middle|\
1\leq i,j,k\leq6
\right\}$
is a basis of $T$.
\end{lemma}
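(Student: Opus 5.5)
The plan is to reduce everything to a multigrading computation in degree $(1,1)$. First I will record the two facts about $(I_{F_{(2,1),n},X_n})_{(1,1)}$ that are needed. The Pl\"ucker relations live in multidegree $(2,0)$, so in multidegree $(1,1)$ the only generators of $I_{F_{(2,1),n},X_n}$ (see the proof of \Cref{lemma1}) are the incidence relations. Hence $(I_{F_{(2,1),n},X_n})_{(1,1)}$ is the image of $\superwedge^3V_n^*$ in $\superwedge^2V_n^*\otimes V_n^*$. This matches the Littlewood--Richardson splitting $\superwedge^2V_n^*\otimes V_n^*\simeq\S_{(2,1)}V_n^*\oplus\superwedge^3V_n^*$ used in \Cref{thm: def of F124}, so $\ker\pi_{(2,1)}=\superwedge^3V_n^*$. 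Next, $I_{L\cup M\cup N,X_n}=I_{L,X_n}\cap I_{M,X_n}\cap I_{N,X_n}$. Since $n\geq18=3\cdot6$, \Cref{lemma1} gives $I_{J,X_n}=I_{X_J,X_n}+I_{F_{(2,1),n},X_n}$ for $J\in\{L,M,N\}$. The problem therefore becomes computing the subspace of sections $\S_{(2,1)}V_n^*$ (the quotient by $\ker\pi_{(2,1)}$) that restrict to zero on all three subflags.

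Since the subflags are general and $18\leq n$, the three $6$-dimensional spaces $A\coloneq V_L^\perp$, $B\coloneq V_M^\perp$, $C\coloneq V_N^\perp$ are independent. I will complete them to a splitting $V_n^*=A\oplus B\oplus C\oplus R$ with $\dim R=n-18$. The torus $(\C^*)^4$ acting by scalars on the four summands acts on $\superwedge^2V_n^*\otimes V_n^*$, on $\S_{(2,1)}V_n^*$ and on $\superwedge^3V_n^*$, and $\pi_{(2,1)}$ is equivariant for it. Restriction to $V_L$ kills exactly $A$, so $(I_{X_L,X_n})_{(1,1)}$ is the sum of the multigraded pieces of positive $A$-degree, and similarly for $M$ and $N$. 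All ideals involved are therefore sums of multigraded pieces. Modulo $\ker\pi_{(2,1)}$, the triple intersection consists of pieces of positive degree in each of $A,B,C$; since the total degree is $3$, only multidegree $(1,1,1,0)$ survives. That piece of $\superwedge^2V_n^*\otimes V_n^*$ is $A\otimes B\otimes C\otimes\C^3$, spanned for each $(i,j,k)$ by $\mu_j\wedge\eta_k\otimes\lambda_i$, $\eta_k\wedge\lambda_i\otimes\mu_j$ and $\lambda_i\wedge\mu_j\otimes\eta_k$. The piece of $\superwedge^3V_n^*$ in this multidegree is one-dimensional per triple, namely the incidence combination of $\lambda_i\wedge\mu_j\wedge\eta_k$. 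The quotient thus has dimension $216\cdot2=432$, which agrees with the dimension of the $(1,1,1)$ weight space of $\S_{(2,1)}\C^3$.

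It remains to check that $\mathcal B$ does the job. Each monomial appearing in $u_{ijk}$ and $v_{ijk}$ contains one $\lambda$, one $\mu$ and one $\eta$, so it vanishes on each of $X_L$, $X_M$, $X_N$. Hence $\mathcal B$ lies in $\bigcap_J(I_{X_J,X_n})_{(1,1)}\subseteq(I_{L\cup M\cup N,X_n})_{(1,1)}$. For fixed $(i,j,k)$, the coefficient vectors of $u_{ijk}$ and $v_{ijk}$ in the three monomials are $(1,-1,0)$ and $(0,1,-1)$, both with zero coordinate sum. The incidence element has coefficients $(1,1,1)$ up to the fixed signs, so its coordinate sum is nonzero and it is not in their span. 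Therefore $u_{ijk},v_{ijk}$ together with the incidence element form a basis of the $3$-dimensional block. Setting $T\coloneq\langle\mathcal B\rangle$, the reverse inclusion comes from the multigrading: any element of $(I_{L\cup M\cup N,X_n})_{(1,1)}$ reduces, modulo $\ker\pi_{(2,1)}$, to its $(1,1,1,0)$ component. This gives $(I_{L\cup M\cup N,X_n})_{(1,1)}=T\oplus(I_{F_{(2,1),n},X_n})_{(1,1)}$ with $\dim T=432$; $T$ maps isomorphically onto its image in $\S_{(2,1)}V_n^*$, which is how the statement's $T\subseteq\S_{(2,1)}V_n^*$ is to be read.

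The main obstacle I expect is making the multigrading argument rigorous modulo $I_F$. Two points need care. First, one must check that the intersection of the three sums $I_{X_J}+I_F$ is computed piecewise; this works because $I_F$ in degree $(1,1)$ is itself multigraded, being the image of $\superwedge^3$ of a direct sum. Second, the orientation and sign conventions of the incidence relation must be tracked carefully, so that $u_{ijk}$ and $v_{ijk}$ are indeed independent of it rather than accidentally proportional.
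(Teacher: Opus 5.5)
Your argument is correct and follows the paper's skeleton: split $V_n^*=V_L^\perp\oplus V_M^\perp\oplus V_N^\perp\oplus U$, observe that only the multidegree $(1,1,1,0)$ piece can vanish on all three subflags, and then check $\mathcal B$ block by block. The difference is in where the computation is done. The paper does not use \Cref{lemma1} here. It splits off $(I_{F_{(2,1),n},X_n})_{(1,1)}=\superwedge^3V_n^*$ by the modular law, then works inside $\S_{(2,1)}V_n^*$. There it computes the common kernel of the restriction maps $\S_{(2,1)}V_n^*\to\S_{(2,1)}V_J^*$ via the Littlewood--Richardson decomposition of $\S_{(2,1)}$ of a direct sum, reading off the multiplicity $2$ as the dimension of the $\mathfrak S_3$-irreducible of type $(2,1)$. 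You instead stay in $\superwedge^2V_n^*\otimes V_n^*$ and use \Cref{lemma1} to write each $(I_{J,X_n})_{(1,1)}$ as $(I_{X_J,X_n})_{(1,1)}+(I_{F_{(2,1),n},X_n})_{(1,1)}$. You then get $432=3\cdot216-216$ by removing the incidence element from each three-dimensional block. Your route is more elementary and produces the dimension count and the basis together, at the cost of relying on \Cref{lemma1}. The paper's route avoids that lemma but needs representation theory of Schur functors of direct sums.

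Your sign worry is unnecessary. With your cyclic ordering of the three monomials, the incidence element is $\Delta(\lambda_i\wedge\mu_j\wedge\eta_k)$ with coefficients exactly $(1,1,1)$, and any $\pm1$ pattern would have odd, hence nonzero, sum anyway. You also do not need to weaken $T\subseteq\S_{(2,1)}V_n^*$ to ``$T$ maps isomorphically onto its image''. The wedge map $\omega\colon\superwedge^2V_n^*\otimes V_n^*\to\superwedge^3V_n^*$ sends all three monomials of a block to $\lambda_i\wedge\mu_j\wedge\eta_k$. So your zero-coordinate-sum condition is exactly membership in $\ker\omega$, which is the $\S_{(2,1)}$-summand. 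Hence the inclusion holds literally, which is how the paper verifies it.
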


\begin{proof}
Recall that
$H^0(X_n,\cO_{X_n}(1,1))
=\superwedge^2V_n^*\otimes V_n^*$
and that
$
\superwedge^2V_n^*\otimes V_n^*
=
\superwedge^3V_n^*
\oplus
\S_{(2,1)}V_n^*$.
Moreover,
$
\left(
I_{F_{(2,1),n},X_n}
\right)_{(1,1)}
=
\superwedge^3V_n^*$. Since $L\cup M\cup N\subseteq F_{(2,1),n}$, the preceding space is
contained in
$\left(I_{L\cup M\cup N,X_n}\right)_{(1,1)}$. Therefore,
\[
\begin{aligned}
\left(
I_{L\cup M\cup N,X_n}
\right)_{(1,1)}
={}&
\left(
I_{F_{(2,1),n},X_n}
\right)_{(1,1)}
\\
&\oplus
\left(
\left(
I_{L\cup M\cup N,X_n}
\right)_{(1,1)}
\cap
\S_{(2,1)}V_n^*
\right).
\end{aligned}
\]
Since $L$, $M$, and $N$ are general and $n\geq18$, the annihilators
$V_L^\perp$, $V_M^\perp$, and $V_N^\perp$ are in direct sum. Choose a
complement $U\subseteq V_n^*$ of dimension $n-18$, so that
$V_n^*=V_L^\perp\oplus V_M^\perp\oplus V_N^\perp\oplus U$. Iterating the Littlewood--Richardson decomposition for Schur modules
of a direct sum gives
\[
\S_{(2,1)}V_n^*
\simeq
\left(
V_L^\perp
\otimes
V_M^\perp
\otimes
V_N^\perp
\right)^{\oplus2}
\oplus A,
\]
where $A$ is a direct sum of tensor products of Schur modules of
$V_L^\perp$, $V_M^\perp$, $V_N^\perp$, and $U$, and no summand of
$A$ has positive degree simultaneously in
$V_L^\perp$, $V_M^\perp$, and $V_N^\perp$. Indeed, with respect to the chosen direct-sum decomposition of
$V_n^*$, the restriction to $\S_{(2,1)}V_L^*$ kills precisely the
summands having positive degree in $V_L^\perp$, and analogously for
$M$ and $N$. Hence a summand belongs to the kernels of all three
restriction maps if and only if it has positive degree in each of
$V_L^\perp$, $V_M^\perp$, and $V_N^\perp$. Since
$\S_{(2,1)}$ has total degree $3$, such a summand must have degree
$1$ in each of these three spaces and degree $0$ in $U$. The
corresponding component $V_L^\perp\otimes V_M^\perp\otimes V_N^\perp$ occurs with multiplicity $2$, equal
to the dimension of the irreducible representation of
$\mathfrak S_3$ associated with the partition $(2,1)$. It follows that
\[
\left(
I_{L\cup M\cup N,X_n}
\right)_{(1,1)}
\cap
\S_{(2,1)}V_n^*
\simeq
\left(
V_L^\perp
\otimes
V_M^\perp
\otimes
V_N^\perp
\right)^{\oplus2}.
\]
In particular, this space has dimension $2\cdot6^3=432$.

{It remains to verify that the elements of $\mathcal B$ form a
basis of this space. For a fixed triple $(i,j,k)$, set
$t^{(1)}_{ijk}
=\mu_j\wedge\eta_k\otimes\lambda_i$,
$t^{(2)}_{ijk}
=\eta_k\wedge\lambda_i\otimes\mu_j$, and
$t^{(3)}_{ijk}
=\lambda_i\wedge\mu_j\otimes\eta_k$.
These tensors belong respectively to the three distinct summands
\[
(V_M^\perp\wedge V_N^\perp)\otimes V_L^\perp,
\qquad
(V_N^\perp\wedge V_L^\perp)\otimes V_M^\perp,
\qquad
(V_L^\perp\wedge V_M^\perp)\otimes V_N^\perp
\]
of $\superwedge^2V_n^*\otimes V_n^*$.
}
{
The wedge map
$\omega:\superwedge^2V_n^*\otimes V_n^*
\longrightarrow\superwedge^3V_n^*$
sends the three tensors to the same element
$\lambda_i\wedge\mu_j\wedge\eta_k$. Therefore,
$u_{ijk}=t^{(1)}_{ijk}-t^{(2)}_{ijk}$ and
$v_{ijk}=t^{(2)}_{ijk}-t^{(3)}_{ijk}$ belong to
$\ker(\omega)=\S_{(2,1)}V_n^*$.
}
{
Each of them contains a factor in every one of
$V_L^\perp$, $V_M^\perp$, and $V_N^\perp$, and hence restricts to
zero on $L$, on $M$, and on $N$. Thus
$u_{ijk},v_{ijk}$ belong to
$\left(I_{L\cup M\cup N,X_n}\right)_{(1,1)}
\cap\S_{(2,1)}V_n^*$.
}
{
For a fixed triple $(i,j,k)$, the tensors
$t^{(1)}_{ijk}$, $t^{(2)}_{ijk}$, and $t^{(3)}_{ijk}$ lie in three
distinct direct summands, so $u_{ijk}$ and $v_{ijk}$ are linearly
independent. As $(i,j,k)$ varies, the tensors $t^{(a)}_{ijk}$ are
distinct elements of the natural tensor bases of those summands.
Hence the full collection $\mathcal B$ is linearly independent.}

Since $\mathcal B$ consists of $2\cdot6^3=432$ elements, it is a basis
of the required space. Taking $T\coloneq\langle\mathcal B\rangle$
concludes the proof.
\end{proof}

\begin{lemma}
\label{lemmaLMN}
Let $n\geq18$, and let
$L,M,N\subseteq F_{(2,1),n}$ be three general subflags isomorphic to
$F_{(2,1),n-6}$. Moreover, let
$l_i\in L$, $m_i\in M$, and $n_i\in N$ be general points on
$L$, $M$, and $N$, respectively, for $i=1,\ldots,12$. Set
$$
K_{L,i}
=
\left(
I_{l_i,X_n}^{\langle2\rangle}
+
I_{F_{(2,1),n},X_n}
\right)_{(1,1)},
K_{M,i}
=
\left(
I_{m_i,X_n}^{\langle2\rangle}
+
I_{F_{(2,1),n},X_n}
\right)_{(1,1)},
K_{N,i}
=
\left(
I_{n_i,X_n}^{\langle2\rangle}
+
I_{F_{(2,1),n},X_n}
\right)_{(1,1)}
$$
for $i=1,\ldots,12$. Then
$$
\left(
I_{L\cup M\cup N,X_n}
\right)_{(1,1)}
\cap
\bigcap_{\substack{J=L,M,N\\ i=1,\ldots,12}}
K_{J,i}
=
\left(
I_{F_{(2,1),n},X_n}
\right)_{(1,1)}.
$$
\end{lemma}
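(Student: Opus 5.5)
The plan is to reduce the statement to an exact count of $432$ linear conditions on the $432$-dimensional space $T$ of \Cref{lemmabasis}, and then to prove that these conditions are independent by an explicit specialization.

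\emph{Reduction to $T$.} By \Cref{lemmabasis}, $\left(I_{L\cup M\cup N,X_n}\right)_{(1,1)}=T\oplus\left(I_{F_{(2,1),n},X_n}\right)_{(1,1)}$. Each $K_{J,i}$ contains $\ker\pi_{(2,1)}=\left(I_{F_{(2,1),n},X_n}\right)_{(1,1)}$. Hence it suffices to show that $T\cap\bigcap_{J,i}K_{J,i}=\{0\}$. By \Cref{cor: consistency two step flag} and \Cref{rem: geometric meaning Schur square}, an element $\varphi\in T$ lies in $K_{L,i}$ if and only if its restriction to $F_{(2,1),n}$ vanishes to first order at $l_i$. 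Since $\varphi$ vanishes identically on $L$, its value and its derivatives along $T_{l_i}L$ are already zero. Only the derivatives in the normal directions remain, so the condition imposed by $K_{L,i}$ on $T$ has codimension at most
\[
\dim F_{(2,1),n}-\dim F_{(2,1),n-6}=(2n-3)-(2n-15)=12.
\]
The same holds for $M$ and $N$. This gives at most $3\cdot12\cdot12=432=\dim T$ conditions, so the claim is equivalent to the independence of these conditions.

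\emph{Explicit form of the conditions.} Write $l=(\langle a\rangle\subset\langle a,b\rangle)\in L$ with $a,b\in V_L$. Expand $\varphi=\sum_{i,j,k}\bigl(x_{ijk}u_{ijk}+y_{ijk}v_{ijk}\bigr)$ in the basis $\mathcal B$. Every term contains exactly one factor $\lambda_i$, which vanishes on $V_L$. Therefore the first-order normal jet at $l$ is obtained by letting $\lambda_i$ act on a normal displacement of $a$ or of $b$ (a class in $V_n/V_L$, dual to $V_L^\perp$). The remaining two factors $\mu_j,\eta_k$ are evaluated at $a,b$.

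This produces an explicit linear map $T\to (V_L^\perp)^*\otimes\C^2\simeq\C^{12}$. The $\C^2$ records whether the displaced vector is $a$ or $b$. Its coefficients are polynomial in the pairings $\mu_j(a),\mu_j(b),\eta_k(a),\eta_k(b)$. Analogous formulas hold for $M$ and $N$.

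\emph{Reduction to $n=18$ and specialization.} All these coefficients depend only on the images of the points in $\bigl(V_L^\perp\oplus V_M^\perp\oplus V_N^\perp\bigr)^*\simeq\C^{18}$. Hence the $432\times432$ matrix of conditions is the same as for $n=18$ with $U=0$. Its determinant is a polynomial in the coordinates of the $36$ points. By semicontinuity it suffices to exhibit one configuration, $l_i\in L$, $m_i\in M$, $n_i\in N$, with nonzero determinant.

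I would first try to make the matrix block-triangular. Choosing the points $l_i$ so that their $a,b$ are supported on few basis vectors dual to the $\mu$'s and $\eta$'s makes most pairings vanish, and the structure should split by the index $i$ of $\lambda_i$ (and symmetrically for $j$ and $k$). Failing that, I would verify a random rational configuration by an exact computer rank computation, in the spirit of the scripts in \cite{Repo}.

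The main obstacle is this nondegeneracy check. The count is exactly tight, so no slack is available, and the mixing between the $u$ and $v$ components prevents an obvious diagonal decomposition.
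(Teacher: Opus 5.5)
Your proposal follows the paper's proof. Both use \Cref{lemmabasis} to split off $\left(I_{F_{(2,1),n},X_n}\right)_{(1,1)}$, observe that at a point of a subflag only the twelve normal first derivatives impose conditions (the paper writes these out through the deformation $l_{s,t}$), and conclude by a computer check that the resulting $432\times432$ system has full rank. Your added observation is useful: the coefficients depend only on the images of the points in the $18$-dimensional space dual to $V_L^\perp\oplus V_M^\perp\oplus V_N^\perp$, which explains why one computation settles every $n\geq18$, something the paper leaves implicit.
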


\begin{proof}
We use the notation of \Cref{lemmabasis}, which gives
$
\left(
I_{L\cup M\cup N,X_n}
\right)_{(1,1)}
=
T
\oplus
\left(
I_{F_{(2,1),n},X_n}
\right)_{(1,1)}$,
where a basis of $T$ is
$$
\mathcal B
=
\left\{
\underbrace{
\mu_j\wedge\eta_k\otimes\lambda_i
-
\eta_k\wedge\lambda_i\otimes\mu_j
}_{\coloneq u_{ijk}},
\underbrace{
\eta_k\wedge\lambda_i\otimes\mu_j
-
\lambda_i\wedge\mu_j\otimes\eta_k
}_{\coloneq v_{ijk}}
\ \middle|\
1\leq i,j,k\leq6
\right\}.
$$
An element of
$\left(I_{L\cup M\cup N,X_n}\right)_{(1,1)}$
can thus be written as
$$
s
=
\sum_{i,j,k=1}^6
\left(
\alpha_{ijk}u_{ijk}
+
\beta_{ijk}v_{ijk}
\right)
+
s',
$$
for some $\alpha_{ijk},\beta_{ijk}\in\C$ and
$s'\in\left(I_{F_{(2,1),n},X_n}\right)_{(1,1)}$.
Proving the statement is equivalent to showing that
\[
s
\in
\bigcap_{\substack{J=L,M,N\\ i=1,\ldots,12}}
K_{J,i}
\quad\Longrightarrow\quad
\alpha_{ijk}
=
\beta_{ijk}
=
0
\text{ for every }1\leq i,j,k\leq6.
\]
Since $s$ already vanishes identically on each of $L$, $M$, and $N$,
its first derivatives along directions tangent to these subflags
vanish. Therefore, imposing that $s$ vanish to order at least two at a
point of one of the subflags is equivalent to imposing the vanishing
of its first derivatives along the directions normal to that subflag.

We now write these conditions explicitly. Let
$\{a_1,\ldots,a_6\}$, $\{b_1,\ldots,b_6\}$, and
$\{c_1,\ldots,c_6\}$ be the bases dual to
$\{\lambda_1,\ldots,\lambda_6\}$,
$\{\mu_1,\ldots,\mu_6\}$, and
$\{\eta_1,\ldots,\eta_6\}$, respectively. Since
$V_L^\perp$, $V_M^\perp$, and $V_N^\perp$ are in direct sum, we may
choose them so that
\begin{gather*}
\lambda_i(a_j)=\delta_{ij},
\quad
\lambda_i(b_j)=\lambda_i(c_j)=0,
\quad
\lambda_i(x)=0,
\quad
\forall\,i,j=1,\ldots,6,\ x\in V_L,
\\
\mu_i(b_j)=\delta_{ij},
\quad
\mu_i(a_j)=\mu_i(c_j)=0,
\quad
\mu_i(x)=0,
\quad
\forall\,i,j=1,\ldots,6,\ x\in V_M,
\\
\eta_i(c_j)=\delta_{ij},
\quad
\eta_i(a_j)=\eta_i(b_j)=0,
\quad
\eta_i(x)=0,
\quad
\forall\,i,j=1,\ldots,6,\ x\in V_N.
\end{gather*}
Let $l\in\{l_1,\ldots,l_{12}\}$. There exist two vectors
$x,y\in V_L$ such that
$l=(\langle x\rangle\subset\langle x,y\rangle)$. A deformation of $l$ along the normal directions of $L$ is
$$
l_{s,t}
=
\left(
\left\langle
\underbrace{
x+\sum_{i=1}^6s_i a_i
}_{x_s}
\right\rangle
\subset
\left\langle
x+\sum_{i=1}^6s_i a_i,
\underbrace{
y+\sum_{i=1}^6t_i a_i
}_{y_t}
\right\rangle
\right),
\qquad
s_i,t_i\in\C.
$$
We have
\begin{gather*}
(\mu_j\wedge\eta_k\otimes\lambda_i)(l_{s,t})
=
\lambda_i(x_s)
\begin{vmatrix}
\mu_j(x_s) & \mu_j(y_t)\\
\eta_k(x_s) & \eta_k(y_t)
\end{vmatrix}
=
s_i
\begin{vmatrix}
\mu_j(x) & \mu_j(y)\\
\eta_k(x) & \eta_k(y)
\end{vmatrix}
=
s_i
\left(
\mu_j(x)\eta_k(y)-\mu_j(y)\eta_k(x)
\right),
\\
(\eta_k\wedge\lambda_i\otimes\mu_j)(l_{s,t})
=
\mu_j(x_s)
\begin{vmatrix}
\eta_k(x_s) & \eta_k(y_t)\\
\lambda_i(x_s) & \lambda_i(y_t)
\end{vmatrix}
=
\mu_j(x)
\begin{vmatrix}
\eta_k(x) & \eta_k(y)\\
s_i & t_i
\end{vmatrix}
=
\mu_j(x)
\left(
\eta_k(x)t_i-\eta_k(y)s_i
\right),
\\
(\lambda_i\wedge\mu_j\otimes\eta_k)(l_{s,t})
=
\eta_k(x_s)
\begin{vmatrix}
\lambda_i(x_s) & \lambda_i(y_t)\\
\mu_j(x_s) & \mu_j(y_t)
\end{vmatrix}
=
\eta_k(x)
\begin{vmatrix}
s_i & t_i\\
\mu_j(x) & \mu_j(y)
\end{vmatrix}
=
\eta_k(x)
\left(
\mu_j(y)s_i-\mu_j(x)t_i
\right),
\\
u_{ijk}(l_{s,t})
=
\left(
2\mu_j(x)\eta_k(y)-\mu_j(y)\eta_k(x)
\right)s_i
-
\mu_j(x)\eta_k(x)t_i,
\\
v_{ijk}(l_{s,t})
=
-
\left(
\mu_j(x)\eta_k(y)+\mu_j(y)\eta_k(x)
\right)s_i
+
2\mu_j(x)\eta_k(x)t_i.
\end{gather*}
Since the differential of $s'$ along the normal directions of $L$
vanishes, the differential of $s$ along the normal directions of $L$,
evaluated at $l$, is
\begin{gather*}
d_ls
=
\sum_{i,j,k=1}^6
\alpha_{ijk}d_lu_{ijk}
+
\sum_{i,j,k=1}^6
\beta_{ijk}d_lv_{ijk}
\\
=
\sum_{i,j,k=1}^6
\alpha_{ijk}
\left(
\left(
2\mu_j(x)\eta_k(y)-\mu_j(y)\eta_k(x)
\right)ds_i
-
\mu_j(x)\eta_k(x)dt_i
\right)
\\
+
\sum_{i,j,k=1}^6
\beta_{ijk}
\left(
-
\left(
\mu_j(x)\eta_k(y)+\mu_j(y)\eta_k(x)
\right)ds_i
+
2\mu_j(x)\eta_k(x)dt_i
\right)
\\
=
\sum_{i=1}^6
\left(
\sum_{j,k=1}^6
\left(
\left(
2\mu_j(x)\eta_k(y)-\mu_j(y)\eta_k(x)
\right)\alpha_{ijk}
-
\left(
\mu_j(x)\eta_k(y)+\mu_j(y)\eta_k(x)
\right)\beta_{ijk}
\right)
\right)ds_i
\\
+
\sum_{i=1}^6
\left(
\sum_{j,k=1}^6
\left(
-\mu_j(x)\eta_k(x)\alpha_{ijk}
+
2\mu_j(x)\eta_k(x)\beta_{ijk}
\right)
\right)dt_i.
\end{gather*}
Thus, imposing $d_ls=0$, we obtain the following twelve equations in
the unknowns $\alpha_{ijk}$ and $\beta_{ijk}$:
\begin{gather*}
\sum_{j,k=1}^6
\left(
2\mu_j(x)\eta_k(y)-\mu_j(y)\eta_k(x)
\right)\alpha_{ijk}
-
\sum_{j,k=1}^6
\left(
\mu_j(x)\eta_k(y)+\mu_j(y)\eta_k(x)
\right)\beta_{ijk}
=
0,
\quad
i=1,\ldots,6,
\\
\sum_{j,k=1}^6
\mu_j(x)\eta_k(x)\alpha_{ijk}
-
2
\sum_{j,k=1}^6
\mu_j(x)\eta_k(x)\beta_{ijk}
=0,
\quad
i=1,\ldots,6.
\end{gather*}
Substituting the corresponding vectors $x,y\in V_L$ for every
$l\in\{l_1,\ldots,l_{12}\}$, we obtain $144$ linear equations in the
unknowns $\alpha_{ijk}$ and $\beta_{ijk}$. Repeating the same procedure for a point
$m=(\langle x\rangle\subset\langle x,y\rangle)$, with
$x,y\in V_M$, we obtain the twelve equations
\begin{gather*}
\sum_{i,k=1}^6
\left(
2\lambda_i(x)\eta_k(y)-\lambda_i(y)\eta_k(x)
\right)\alpha_{ijk}
+
\sum_{i,k=1}^6
\left(
2\lambda_i(y)\eta_k(x)-\lambda_i(x)\eta_k(y)
\right)\beta_{ijk}
=
0,
\quad
j=1,\ldots,6,
\\
\sum_{i,k=1}^6
\lambda_i(x)\eta_k(x)\alpha_{ijk}
+
\sum_{i,k=1}^6
\lambda_i(x)\eta_k(x)\beta_{ijk}
=
0,
\quad
j=1,\ldots,6.
\end{gather*}
As $m$ varies in $\{m_1,\ldots,m_{12}\}$, these give another $144$
linear equations.

Finally, for a point
$q=(\langle x\rangle\subset\langle x,y\rangle)$, with
$x,y\in V_N$, we obtain the twelve equations
\begin{gather*}
\sum_{i,j=1}^6
\left(
\lambda_i(x)\mu_j(y)+\lambda_i(y)\mu_j(x)
\right)\alpha_{ijk}
-
\sum_{i,j=1}^6
\left(
2\lambda_i(y)\mu_j(x)-\lambda_i(x)\mu_j(y)
\right)\beta_{ijk}
=0,
\quad
k=1,\ldots,6,
\\
2
\sum_{i,j=1}^6
\lambda_i(x)\mu_j(x)\alpha_{ijk}
-
\sum_{i,j=1}^6
\lambda_i(x)\mu_j(x)
\beta_{ijk}
=
0,
\quad
k=1,\ldots,6.
\end{gather*}
Substituting the corresponding vectors for the points
$n_1,\ldots,n_{12}$, we obtain the last $144$ equations. In
conclusion, we obtain a $432\times432$ full-rank linear system.
The full-rank computation can be checked by running \path{computations/6.8_lemma.m2}; see \cite{Repo}.
\end{proof}

\subsection{The Horace induction}
\label{subsect: Horace induction}
The auxiliary induction statements below are only needed in the
range $n\geq18$, and we state them in that range.
\begin{lemma}
\label{lemmaLM}
Let $n\geq18$.
Let $L,M
\subseteq
F_{(2,1),n}$
be two general subflags isomorphic to $F_{(2,1),n-6}$.
Let $l_i\in L$ and $m_i\in M$ be general points on $L$ and $M$,
respectively, for $i=1,\ldots,2n-17$, and let
$f_j\in F_{(2,1),n}$ be general points for $j=1,\ldots,12$. Set
$$
K_{L,i}
=
\left(
I_{l_i,X_n}^{\langle2\rangle}
+
I_{F_{(2,1),n},X_n}
\right)_{(1,1)},
K_{M,i}
=
\left(
I_{m_i,X_n}^{\langle2\rangle}
+
I_{F_{(2,1),n},X_n}
\right)_{(1,1)},
K_j
=
\left(
I_{f_j,X_n}^{\langle2\rangle}
+
I_{F_{(2,1),n},X_n}
\right)_{(1,1)}
$$
for $i=1,\ldots,2n-17$ and $j=1,\ldots,12$. Then
$$
E
\coloneq
\left(
I_{L\cup M,X_n}
\right)_{(1,1)}
\cap
\bigcap_{i=1}^{2n-17}
\left(
K_{L,i}\cap K_{M,i}
\right)
\cap
\bigcap_{j=1}^{12}K_j
=
\left(
I_{F_{(2,1),n},X_n}
\right)_{(1,1)}.
$$
\end{lemma}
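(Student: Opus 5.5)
The plan is a Horace-type degeneration that reduces the statement to \Cref{lemmaLMN}. The inclusion $\supseteq$ is clear, since every space in the intersection contains $\left(I_{F_{(2,1),n},X_n}\right)_{(1,1)}$. For $\subseteq$, the dimension of $E/\left(I_{F_{(2,1),n},X_n}\right)_{(1,1)}$ is upper semicontinuous in the position of the support points, so it suffices to prove vanishing for one special configuration. I would choose a third general subflag $N\simeq F_{(2,1),n-6}$ and specialize the twelve general points $f_j$ to general points $n_j\in N$. I would also split the points on $L$ and $M$ into a group of $12$ and a group of $2n-29$.

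\textbf{Step 1 (linear algebra on $E$).} As in the proof of \Cref{lemmabasis}, I would decompose $V_n^*=V_L^\perp\oplus V_M^\perp\oplus U$ with $\dim U=n-12$. The $\S_{(2,1)}$-part of $\left(I_{L\cup M,X_n}\right)_{(1,1)}$ is then the sum of the summands having positive degree in both $V_L^\perp$ and $V_M^\perp$. These are two copies of $V_L^\perp\otimes V_M^\perp\otimes U$, together with the summands of multidegree $(2,1)$ and $(1,2)$ in $(V_L^\perp,V_M^\perp)$. This gives dimension $72(n-12)+432=72n-432$.

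As in \Cref{lemmaLMN}, a double point at a point of $L$ imposes only the vanishing of the $12$ normal derivatives, and likewise on $M$. A general point imposes $2n-2$ conditions. The expected count is therefore
\[
2(2n-17)\cdot12+12(2n-2)=72n-432,
\]
exactly the dimension. So the lemma is a statement that these conditions are independent, and this is the claim to be proved.

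\textbf{Step 2 (trace on $N$).} Refine the decomposition by taking $U=V_N^\perp\oplus U'$ with $\dim U'=n-18$. Restriction to $X_N$ kills exactly the summands with positive degree in $V_N^\perp$. By \Cref{lemma1}, the trace of an element of $E$ lies in the corresponding space on $N\simeq F_{(2,1),n-6}$, namely in
\[
\left(I_{(L\cap N)\cup(M\cap N),X_N}\right)_{(1,1)},
\]
where $L\cap N$ and $M\cap N$ are two general subflags $F_{(2,1),n-12}$ by \Cref{flagintersec}. The trace must also satisfy the double-point conditions at the $n_j$ and the residual normal-derivative conditions coming from the $2n-29$ extra points of $L$ and $M$. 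The latter conditions involve only coefficients in $V_L^\perp\otimes V_M^\perp\otimes U'$ and the $(2,1)$, $(1,2)$ blocks.

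I would show that these conditions force the trace to vanish. I would first attempt this by a recursion in $n$, with step $6$, on exactly this statement, using \Cref{lemmanumerico} to match the numbers ($2n-17$ versus $2(n-6)-17$, and the difference of $12$ points). I would verify the base ranges $18\leq n<24$ computationally, as in \Cref{lemmaLMN}.

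\textbf{Step 3 (residual).} Once the trace vanishes, $s\in\left(I_{L\cup M\cup N,X_n}\right)_{(1,1)}$. The remaining conditions include double points at $12$ general points on each of $L$, $M$ and $N$, so \Cref{lemmaLMN} gives $s\in\left(I_{F_{(2,1),n},X_n}\right)_{(1,1)}$.

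The main obstacle is Step 2. The conditions from the $n_j$ alone do not suffice to kill the trace, so the extra $2n-29$ points on $L$ and $M$ must be used in the trace without being wasted in the residual. If the clean split fails, I would replace it by a differential Horace step: move only part of the normal-derivative conditions of the points on $L\cap N$ and $M\cap N$ into the trace. I would tune the number moved so that the trace count and the residual count are both exact, using $\delta$-type bookkeeping as in \Cref{rem:construction}.
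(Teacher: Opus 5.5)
Your plan is the paper's proof. You take a third general subflag $N$ and put the $f_j$ on $N$. The trace on $X_N$ is handled by the same statement in dimension $n-6$, with base cases $18\leq n\leq 23$ checked by computer. The residual $E\cap(I_{N,X_n})_{(1,1)}$ is handled by \Cref{lemmaLMN} using the remaining $12+12+12$ points: the $12$ unspecialized points on each of $L$ and $M$, and the $f_j$ on $N$.

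What you should state explicitly is that the $2n-29$ points of each group are specialized into $L\cap N$ and $M\cap N$. At such a point $l$ one has $T_lF_{(2,1),n}=T_lL+T_lN$, so for elements vanishing on $L\cup M$ the double-point condition at $l$ depends only on the restriction to $N$. The trace is then exactly the $(n-6)$-instance of the lemma, so the obstacle you flag in Step 2 does not arise and the differential Horace fallback is unnecessary.
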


\begin{proof}
We proceed by induction from $n-6$ to $n$. The base cases $n=18,\dots,23$ can be checked by running the scripts \path{computations/6.9_lemma_n*.m2} provided in \cite{Repo}.

Suppose that the statement holds in dimension $n-6$. Let
$N
\subseteq
F_{(2,1),n}
$
be a general subflag isomorphic to $F_{(2,1),n-6}$. Let
$V_N\subseteq V_n$ be the corresponding $(n-6)$-dimensional vector
subspace, so that
\[
N
=
\Fl(1,2;V_N),
\qquad
X_N
=
\P\left(\superwedge^2V_N\right)\times\P(V_N).
\]
We specialize the points as follows:
\begin{itemize}[leftmargin=*]
    \item $l_i\in L\cap N$ for $i=1,\ldots,2n-29$, and these points
    are general in $L\cap N$;

    \item $m_i\in M\cap N$ for $i=1,\ldots,2n-29$, and these points
    are general in $M\cap N$;

    \item $l_i\in L$ for $i=2n-28,\ldots,2n-17$, and these points are
    general in $L$;

    \item $m_i\in M$ for $i=2n-28,\ldots,2n-17$, and these points are
    general in $M$;

    \item $f_j\in N$ for $j=1,\ldots,12$, and these points are general
    in $N$.
\end{itemize}
By semicontinuity, it is enough to prove the statement for this
special configuration of points. Set
$$
E_1
\coloneq
\bigcap_{i=1}^{2n-29}
\left(
K_{L,i}\cap K_{M,i}
\right),
\qquad
E_2
\coloneq
\bigcap_{i=2n-28}^{2n-17}
\left(
K_{L,i}\cap K_{M,i}
\right)
\cap
\bigcap_{j=1}^{12}K_j,
$$
so that
$
E
=
\left(
I_{L\cup M,X_n}
\right)_{(1,1)}
\cap E_1\cap E_2$. We have the following exact sequence of vector spaces:
$$
0
\longrightarrow
E\cap
\left(
I_{N,X_n}
\right)_{(1,1)}
\longrightarrow
E
\longrightarrow
\frac{
E+(I_{N,X_n})_{(1,1)}
}{
(I_{N,X_n})_{(1,1)}
}
\longrightarrow
0.
$$
Moreover,
\[
\begin{aligned}
E\cap
\left(
I_{N,X_n}
\right)_{(1,1)}
&=
\left(
I_{L\cup M\cup N,X_n}
\right)_{(1,1)}
\cap E_1\cap E_2
\\
&\subseteq
\left(
I_{L\cup M\cup N,X_n}
\right)_{(1,1)}
\cap E_2.
\end{aligned}
\]
Since
$
2n-17-(2n-28)+1=12$,
the point conditions appearing in $E_2$ are in the configuration of
\Cref{lemmaLMN}. Hence,
$$
\left(
I_{L\cup M\cup N,X_n}
\right)_{(1,1)}
\cap E_2
=
\left(
I_{F_{(2,1),n},X_n}
\right)_{(1,1)}.
$$
On the other hand,
\[
\left(
I_{F_{(2,1),n},X_n}
\right)_{(1,1)}
\subseteq
E\cap
\left(
I_{N,X_n}
\right)_{(1,1)}.
\]
Therefore,
\[
E\cap
\left(
I_{N,X_n}
\right)_{(1,1)}
=
\left(
I_{F_{(2,1),n},X_n}
\right)_{(1,1)}.
\]
It remains to prove that the right-hand term of the exact sequence is
zero. Set
\begin{gather*}
K_{L,i}'
=
\left(
I_{l_i,X_n}^{\langle2\rangle}
+
I_{N,X_n}
\right)_{(1,1)},
\quad
K_{M,i}'
=
\left(
I_{m_i,X_n}^{\langle2\rangle}
+
I_{N,X_n}
\right)_{(1,1)},
\quad
K_j'
=
\left(
I_{f_j,X_n}^{\langle2\rangle}
+
I_{N,X_n}
\right)_{(1,1)}
\\
K_{L,i}''
=
\left(
I_{l_i,X_N}^{\langle2\rangle}
+
I_{N,X_N}
\right)_{(1,1)},
\quad
K_{M,i}''
=
\left(
I_{m_i,X_N}^{\langle2\rangle}
+
I_{N,X_N}
\right)_{(1,1)},
\quad
K_j''
=
\left(
I_{f_j,X_N}^{\langle2\rangle}
+
I_{N,X_N}
\right)_{(1,1)}
\end{gather*}
for $i=1,\ldots,2n-29$ and $j=1,\ldots,12$. Since
$$
\left(
I_{N,X_n}
\right)_{(1,1)}
\subseteq
E+
\left(
I_{N,X_n}
\right)_{(1,1)}
\subseteq
\left(
I_{L\cup M,X_n}
+
I_{N,X_n}
\right)_{(1,1)}
\cap
\bigcap_{i=1}^{2n-29}
\left(
K_{L,i}'\cap K_{M,i}'
\right)
\cap
\bigcap_{j=1}^{12}K_j'
\eqcolon E',
$$
it is enough to prove that
$
E'
\subseteq
\left(
I_{N,X_n}
\right)_{(1,1)}$.
Since
$
I_{X_N,X_n}
\subseteq
I_{N,X_n}$,
we may consider the quotient of $E'$ by
$(I_{X_N,X_n})_{(1,1)}$. We have
$$
\frac{
E'
}{
(I_{X_N,X_n})_{(1,1)}
}
\subseteq
\left(
I_{(L\cup M)\cap X_N,X_N}
+
I_{N,X_N}
\right)_{(1,1)}
\cap
\bigcap_{i=1}^{2n-29}
\left(
K_{L,i}''\cap K_{M,i}''
\right)
\cap
\bigcap_{j=1}^{12}K_j''
\eqcolon E''.
$$
Since $L\cup M
\subseteq
F_{(2,1),n}$,
one has
\[
(L\cup M)\cap X_N
=
(L\cup M)\cap
\left(
X_N\cap F_{(2,1),n}
\right)
=
(L\cup M)\cap N.
\]
By \Cref{flagintersec}, this is the union of two general subflags
isomorphic to $F_{(2,1),n-12}$ inside
$
N
\simeq
F_{(2,1),n-6}$.
Moreover, since $(L\cup M)\cap N\subseteq N$, we have
$$
\left(
I_{(L\cup M)\cap X_N,X_N}
+
I_{N,X_N}
\right)_{(1,1)}
=
\left(
I_{(L\cup M)\cap N,X_N}
\right)_{(1,1)}.
$$
Thus, in $X_N$, the space $E''$ is in the same configuration as the
one appearing in the statement of the lemma in dimension $n-6$.
The induction hypothesis therefore gives
$
E''
=
\left(
I_{N,X_N}
\right)_{(1,1)}$.
It follows that
$
E'
\subseteq
\left(
I_{N,X_n}
\right)_{(1,1)}$,
and this concludes the proof.
\end{proof}

\begin{lemma}\label{lemL}
Let $n\geq18$, let
$L,M\subset F_{(2,1),n}$ be two general subflags isomorphic to
$F_{(2,1),n-6}$, and let $P\in L\cap M$ be general.
Consider $l_i\in L$ in general position on $L$ for
$i=1,\dots,k_{n-6}$ and
$f_j\in F_{(2,1),n}$ in general position on
$F_{(2,1),n}$ for $j=1,\dots,2n-5$.
Let
$\Delta_n\subset\superwedge^2V_n^*\otimes V_n^*$
be a fixed member of the nonempty family constructed in
\Cref{rem:construction} for the data $P,L,M$. In particular, assume
that
\begin{itemize}[leftmargin=*]
\item $\codim_{\superwedge^2V_n^*\otimes V_n^*}(\Delta_n)=\delta_n$;
\item $\Delta_n\supset(I_{P,X_n}^{\langle2\rangle}+I_{F_{(2,1),n},X_n})_{(1,1)}$
\item $\codim_{\superwedge^2V_n^*\otimes V_n^*}(\Delta_n+(I_{L,X_n})_{(1,1)})=\delta_{n-6}.$
\end{itemize}
Set
$$K_{L,i}=(I^{\langle2\rangle}_{l_i,X_n}+I_{F_{(2,1),n},X_n})_{(1,1)}, \quad K_j=(I^{\langle2\rangle}_{f_j,X_n}+I_{F_{(2,1),n},X_n})_{(1,1)}$$
for $i=1,\dots,k_{n-6}$ and $j=1,\dots,2n-5$. 
Then
$$E\coloneq(I_{L,X_n})_{(1,1)}\cap\bigcap_{i=1}^{k_{n-6}}K_{L,i}\cap\bigcap_{j=1}^{2n-5}K_j\cap \Delta_n=(I_{F_{(2,1),n},X_n})_{(1,1)}.$$
\end{lemma}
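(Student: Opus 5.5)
The proof will follow the Horace scheme of \Cref{lemmaLM}. We specialize the $2n-5$ general points $f_j$ so that $2n-17$ of them lie on $M$ and the remaining $12$ stay general in $F_{(2,1),n}$. The exact sequence is taken with respect to the second subflag $M$. By semicontinuity, with $\Delta_n$ fixed as in \Cref{rem:construction}, it suffices to prove the equality in this special configuration. The points $l_i$ are split as well: $k_{n-12}$ of them are placed in $L\cap M$ (general there) and the remaining $k_{n-6}-k_{n-12}=2n-17$ stay general in $L$. This count follows from \Cref{lemmanumerico}, since $k_{n-6}-k_{n-12}=2(n-6)-5$.

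We then use the exact sequence
\[
0\longrightarrow E\cap(I_{M,X_n})_{(1,1)}\longrightarrow E\longrightarrow \frac{E+(I_{M,X_n})_{(1,1)}}{(I_{M,X_n})_{(1,1)}}\longrightarrow 0 .
\]
For the kernel, we discard $\Delta_n$ and the conditions at the points lying on $L\cap M$. What remains is contained in
\[
(I_{L\cup M,X_n})_{(1,1)}\cap\bigcap_{i}\bigl(K_{L,i}\cap K_{M,i}\bigr)\cap\bigcap_{j}K_j ,
\]
where $i$ runs over the $2n-17$ general points on $L$ and on $M$, and $j$ over the $12$ general points. This is exactly the configuration of \Cref{lemmaLM}, so the kernel equals $(I_{F_{(2,1),n},X_n})_{(1,1)}$. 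The reverse inclusion is automatic.

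For the trace, we argue as in \Cref{lemmaLM}. We replace $I_{F_{(2,1),n},X_n}$ by $I_{M,X_n}$ in every point condition and pass to the quotient by $(I_{X_M,X_n})_{(1,1)}$, using \Cref{lemma1}. This lands in $X_M$, where the relevant space is contained in
\[
(I_{L\cap M,X_M})_{(1,1)}\cap\bigcap_{i=1}^{k_{n-12}}K''_{L,i}\cap\bigcap_{j=1}^{2n-17}K''_j\cap\Delta_n^M ,
\]
with $\Delta_n^M=\Delta^M_{n-6}$ as in \Cref{rem:construction}. Here $L\cap M\simeq F_{(2,1),n-12}$ by \Cref{flagintersec}, and $P\in L\cap M$. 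This is precisely the statement of the present lemma in dimension $n-6$, with $L$ replaced by $L\cap M$ inside $M\simeq F_{(2,1),n-6}$ and $2(n-6)-5=2n-17$ general points. Condition (3) of \Cref{rem:construction} supplies the hypothesis on the codimension of $\Delta^M_{n-6}+(I_{L\cap M,X_M})_{(1,1)}$, and condition (2) supplies the codimension $\delta_{n-6}$. Induction from $n-6$ to $n$ then gives that the trace lies in $(I_{M,X_M})_{(1,1)}$, so the right-hand term vanishes, and together with the kernel computation this proves the lemma. The base cases $n=18,\dots,23$ would be checked by computer, as for \Cref{lemmaLM}. For $n-6<18$ the induction needs the analogous statement in dimensions $12,\dots,17$, which would also have to be verified computationally with the initial $\Delta$'s from \cite{Repo}.

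The main obstacle is bookkeeping rather than a new idea. The condition $\Delta_n$ must be used only in the trace and be harmlessly dropped in the kernel, and the dropped conditions at the points on $L\cap M$ must not be needed there. One also has to check that the induced subspace on $X_M$ is a legitimate member of the family of \Cref{rem:construction} in dimension $n-6$, relative to the data $P$, $L\cap M$ and some auxiliary $M'$. This is where the simultaneous compatible construction of the $\Delta$'s is essential, and I would verify it first.
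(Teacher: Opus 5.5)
Your proposal is correct and follows the paper's proof essentially step for step. It uses the same specialization ($k_{n-12}$ of the $l_i$ on $L\cap M$, $2n-17$ of the $f_j$ on $M$, the rest general), the same exact sequence with respect to $M$, the kernel reduced to \Cref{lemmaLM}, and the trace reduced to the induction hypothesis in dimension $n-6$ with $\Delta_n''=\Delta^M_{n-6}$, whose properties come, as you anticipate, from the compatible construction in \Cref{rem:construction}. One small correction: the base cases $n=18,\dots,23$ are verified directly, so the inductive step is only invoked for $n\geq 24$, where $n-6\geq 18$, and no additional checks in dimensions $12,\dots,17$ are needed.
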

\begin{proof}
We proceed by induction from $n-6$ to $n$.
The base cases $n=18,\dots,23$ can be checked by running
the files matching \path{computations/6.10_lemma_n*.m2} in
\cite{Repo}.

Assume now that the statement holds in dimension $n-6$. Let
$V_M\subset V_n$ be the $(n-6)$-dimensional vector subspace
such that $M=\Fl(1,2;V_M)$, and set
$X_M=\P(\superwedge^2V_M)\times\P(V_M)$.
Keeping $P,L,M$, and $\Delta_n$ fixed, specialize the points as
follows:
\begin{itemize}[leftmargin=*]
\item $l_i\in L\cap M$ for $i=1,\dots,k_{n-12}$ and in general position in $L\cap M$;
\item $f_j\in M$ for $j=1,\dots,2n-17$ and in general position in $M$;
\item $l_i\in L$ for $i=k_{n-12}+1,\dots,k_{n-6}$ and in general position in $L$;
\item $f_j\in F_{(2,1),n}$ for $j=2n-16,\dots,2n-5$ and in general position in $F_{(2,1),n}$.
\end{itemize}
By semicontinuity with respect to the support points, it is
enough to prove the statement for this special configuration. Set
$$E_1\coloneq \bigcap_{i=1}^{k_{n-12}}K_{L,i}\cap\Delta_n,\quad E_2\coloneq \bigcap_{i=k_{n-12}+1}^{k_{n-6}}K_{L,i}\cap\bigcap_{j=1}^{2n-5}K_j$$
so that $E=(I_{L,X_n})_{(1,1)}\cap E_1\cap E_2$. Consider the following exact sequence of vector spaces
$$0\longrightarrow E\cap (I_{M,X_n})_{(1,1)}\longrightarrow E\longrightarrow \frac{E+(I_{M,X_n})_{(1,1)}}{(I_{M,X_n})_{(1,1)}}\longrightarrow 0.$$
We have 
\[
(I_{F_{(2,1),n},X_n})_{(1,1)}\subseteq E\cap (I_{M,X_n})_{(1,1)}=(I_{L\cup M,X_n})_{(1,1)}\cap E_1\cap E_2\subseteq (I_{L\cup M,X_n})_{(1,1)}\cap E_2.
\]
By \Cref{lemmanumerico} we have $k_{n-6}-(k_{n-12}+1)+1=2(n-6)-5=2n-17$, so that among the points whose ideals appear in $E_2$ there are $2n-17$ points in general position on $L$, $2n-17$ points in general position on $M$ and 12 points in general position on $F_{(2,1),n}$. Thus, by \Cref{lemmaLM}, we get $E\cap(I_{M,X_n})_{(1,1)}=(I_{F_{(2,1),n},X_n})_{(1,1)}$. 

Now we prove that the right-hand term of the exact sequence is 0. Set
\begin{gather*}K_{L,i}'=(I^{\langle2\rangle}_{l_i,X_n}+I_{M,X_n})_{(1,1)},\quad K_{j}'=(I^{\langle2\rangle}_{f_j,X_n}+I_{M,X_n})_{(1,1)},\quad \Delta_n'=\Delta_n+(I_{M,X_n})_{(1,1)}\\
K_{L,i}''=(I^{\langle2\rangle}_{l_i, X_{M}}+I_{M,X_M})_{(1,1)}, \quad K_{j}''=(I^{\langle2\rangle}_{f_j, X_{M}}+I_{M,X_M})_{(1,1)}\quad \Delta_n''=\frac{\Delta_n+(I_{M,X_n})_{(1,1)}}{(I_{X_M,X_n})_{(1,1)}}
\end{gather*}
for $i=1,\dots,k_{n-12}$ and $j=1,\dots,2n-17$. Since
$$(I_{M,X_n})_{(1,1)}\subseteq E+(I_{M,X_n})_{(1,1)}\subseteq(I_{L,X_n}+I_{M,X_n})_{(1,1)}\cap\bigcap_{i=1}^{k_{n-12}}K_{L,i}'\cap\bigcap_{j=1}^{2n-17}K_j'\cap \Delta_n'\eqcolon E',$$
to conclude it is enough to show that $E'\subseteq(I_{M,X_n})_{(1,1)}$. Since $I_{X_M,X_n}\subset I_{M,X_n}$, we can consider the quotient $E'/(I_{X_M,X_n})_{(1,1)}$ and we have that 
$$E'/(I_{X_M,X_n})_{(1,1)}\subseteq (I_{L\cap X_M, X_M}+I_{M,X_M})_{(1,1)}\cap\bigcap_{i=1}^{k_{n-12}}K_{L,i}''\cap\bigcap_{j=1}^{2n-17}K_j''\cap\Delta_n''\eqcolon E''.$$
Since $L\subset F_{(2,1),n}$, we have that $L\cap X_M=L\cap(X_M\cap F_{(2,1),n})=L\cap M$ and, by \Cref{flagintersec}, $L\cap M$ is a generic $F_{(2,1),n-12}$ in $M=F_{(2,1),n-6}$. Moreover, since $(L\cap M)\subset M$ we have that 
$$
(I_{L\cap M,X_M}+I_{M,X_M})_{(1,1)}=(I_{L\cap M,X_M})_{(1,1)}.
$$
The points whose ideals appear in $E''$ are $k_{n-12}$ points in general position on $L\cap M\subset M$ and $2n-17$ points in general position on $M$.
By the choice of $\Delta_n$ as a member of the family
constructed in \Cref{rem:construction}, the quotient
\[
\Delta_n''
=
\frac{
\Delta_n+(I_{M,X_n})_{(1,1)}
}{
(I_{X_M,X_n})_{(1,1)}
}
\]
is the fixed special subspace $\Delta_{n-6}^{M}$ introduced in \Cref{rem:construction} and used in the
induction hypothesis. In particular,
\begin{itemize}[leftmargin=*]
    \item {
    $
    \codim_{\superwedge^2V_M^*\otimes V_M^*}
    (\Delta_n'')
    =
    \delta_{n-6}$;
    }

    \item {
    $
    \Delta_n''
    \supseteq
    \left(
    I_{P,X_M}^{\langle2\rangle}
    +
    I_{M,X_M}
    \right)_{(1,1)}$;
    }

    \item {
    $
    \codim_{\superwedge^2V_M^*\otimes V_M^*}
    \left(
    \Delta_n''
    +
    (I_{L\cap M,X_M})_{(1,1)}
    \right)
    =
    \delta_{n-12}$.
    }
\end{itemize}
The first and the third conditions follow directly from the compatibility built into the construction in \Cref{rem:construction}; the second follows from the restriction of $I_{P,X_n}^{\langle2\rangle}$ to $X_M$. Therefore, the induction
hypothesis for \Cref{lemL} applies in dimension $n-6$.
{Hence, $E''
=
\left(
I_{M,X_M}
\right)_{(1,1)}$.
In particular,
$
E'
\subseteq
\left(
I_{M,X_n}
\right)_{(1,1)}$,
and this concludes the proof}.
\end{proof}

\begin{theorem}
\label{goodpost}
Let $n\geq5$ and let $\Delta_n\subset\bigwedge^2V_n^*\otimes V_n^*$ be a fixed member of the nonempty family constructed in \Cref{rem:construction} for the data of two general subflags $L,M\subset F_{(2,1),n}$ isomorphic to $F_{(2,1),n-6}$ and a general point $P\in L\cap M$. In particular, suppose that
\[
\codim_{\superwedge^2V_n^*\otimes V_n^*}(\Delta_n)
=
\delta_n
\qquad
\text{and}
\qquad
\Delta_n
\supseteq
\left(
I_{P,X_n}^{\langle2\rangle}
+
I_{F_{(2,1),n},X_n}
\right)_{(1,1)}.
\]
For general points
$P_1,\ldots,P_{k_n}
\in
F_{(2,1),n}$,
set
\[
K_i
\coloneq
\left(
I_{P_i,X_n}^{\langle2\rangle}
+
I_{F_{(2,1),n},X_n}
\right)_{(1,1)}.
\]
Then
\[
\bigcap_{i=1}^{k_n}K_i
\cap
\Delta_n
=
\left(
I_{F_{(2,1),n},X_n}
\right)_{(1,1)}.
\]
\end{theorem}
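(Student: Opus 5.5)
We argue by induction on $n$ in steps of $6$, following the same Horace pattern as in \Cref{lemmaLM} and \Cref{lemL}. The base cases are $n=5,\dots,17$. For these $n$ the subflags of codimension $6$ (or $12$) are empty or too small to support the induction, and we check the equality directly. We fix explicit configurations of $k_n$ points and the fixed $\Delta_n$, and verify by a computer rank computation (scripts in \cite{Repo}) that the intersection has dimension $\dim\superwedge^3V_n^*$. Semicontinuity in the support points then gives the general case. We expect the cases $n=13,16$ to be the ones where the compatible choices of $\Delta_{13}\supset$ induced $\Delta_7$ and $\Delta_{16}\supset$ induced $\Delta_{10}$ recorded in \Cref{rem:construction} must be used. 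For $n=5$ nothing beyond \Cref{thm: classification flags dimension five} is needed.

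For $n\geq18$, take $L,M$ and $P\in L\cap M$ as in the construction of $\Delta_n$, and specialize the $k_n$ points. We put $k_{n-6}$ points general on $L$ and the remaining $k_n-k_{n-6}=2n-5$ points general on $F_{(2,1),n}$; this split is given by \Cref{lemmanumerico}. By semicontinuity, keeping $\Delta_n$ fixed, it suffices to treat this configuration. Let $E$ be the intersection. We use the exact sequence
\[
0\to E\cap (I_{L,X_n})_{(1,1)}\to E\to \frac{E+(I_{L,X_n})_{(1,1)}}{(I_{L,X_n})_{(1,1)}}\to 0 .
\]
The kernel term is exactly the space in \Cref{lemL}, with the same $k_{n-6}$ points on $L$, the $2n-5$ general points, and $\Delta_n$. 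Property (1) of \Cref{rem:construction} gives the codimension hypothesis of \Cref{lemL}. Hence this term equals $(I_{F_{(2,1),n},X_n})_{(1,1)}$.

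For the trace, we enlarge each condition by $I_{L,X_n}$ and pass to the quotient by $(I_{X_L,X_n})_{(1,1)}$, as in the proof of \Cref{lemL}. Here \Cref{lemma1} identifies $I_{L,X_n}$ modulo $I_{X_L,X_n}$ with $I_{L,X_L}$. The residual general points $f_j$ impose conditions that we simply drop, which only enlarges the space. The points $l_i\in L$ give $(I^{\langle2\rangle}_{l_i,X_L}+I_{L,X_L})_{(1,1)}$. The subspace $\Delta_n$ induces $\Delta_{n-6}^L$, which is the fixed compatible subspace of \Cref{rem:construction} with codimension $\delta_{n-6}$ and containing the double-point space at $P\in L$. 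The trace space therefore sits inside the dimension-$(n-6)$ version of the statement, with $k_{n-6}$ general points on $L\simeq F_{(2,1),n-6}$. By the induction hypothesis it equals $(I_{L,X_L})_{(1,1)}$, so the right-hand term of the sequence vanishes and $E=(I_{F_{(2,1),n},X_n})_{(1,1)}$.

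The main obstacle is the compatibility bookkeeping: the induced $\Delta^L_{n-6}$ must be an admissible member of the family in dimension $n-6$, for new data $L',M'\subset L$ and a point $P$ in $L'\cap M'$. The construction in \Cref{rem:construction} was designed so that the inductive choices $\Delta^L_{n-6}$ are themselves built from such data recursively. I would verify that this recursion is well founded down to the base cases $n\in\{5,\dots,17\}$, and that the base scripts use exactly those induced subspaces. The genuinely hard geometric input, the trace on $L$, is already packaged in \Cref{lemL}.
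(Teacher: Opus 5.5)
Your proposal is correct and follows the paper's proof essentially step for step. It uses the same base cases $n=5,\dots,17$ by computation, the same specialization of $k_{n-6}$ points onto $L$ and $k_n-k_{n-6}=2n-5$ points general in $F_{(2,1),n}$, and \Cref{lemL} for the kernel term of the exact sequence. For the trace it likewise discards the residual points and applies the induction hypothesis on $L$, using that $\Delta_n$ induces the fixed subspace $\Delta^L_{n-6}$; the compatibility bookkeeping you flag is exactly what the paper delegates to \Cref{rem:construction}.
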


\begin{proof}
We proceed by induction from $n-6$ to $n$.
The base cases $n=5,\dots,17$ can be checked by running
the files matching \path{computations/6.11_theorem_n*.m2} in
\cite{Repo}.
For $5\leq n\leq17$, these computations provide both the
required choice of $P,\Delta_n$ and the asserted equality. Assume now
that $n\geq18$ and that the statement holds in dimension $n-6$.

Let $V_L\subset V_n$ be the 
$(n-6)$-dimensional subspace such that $L=\Fl(1,2;V_L)$, and set $X_L=\P(\superwedge^2V_L)\times\P(V_L)$. Since $\Delta_n$ is constructed as in \Cref{rem:construction}, then
\[
\Delta_n''
\coloneq
\frac{
\Delta_n+(I_{L,X_n})_{(1,1)}
}{
(I_{X_L,X_n})_{(1,1)}
}
\]
is the fixed special subspace $\Delta_{n-6}^{L}$ introduced in \Cref{rem:construction} to which the
induction hypothesis applies. We keep the notation $K_i$ and set
\[
E
\coloneq
\bigcap_{i=1}^{k_n}K_i
\cap
\Delta_n.
\]
Since the dimension of this intersection is upper semicontinuous in the support points and $\left(I_{F_{(2,1),n},X_n}\right)_{(1,1)}$ is always contained in it, it is enough to specialize the points $P_1,\ldots,P_{k_n}$ so that
\begin{itemize}[leftmargin=*]
\item $P_i\in L$ for $i=1,\dots,k_{n-6}$ and in general position in
$L$;
\item $P_i\in F_{(2,1),n}$ for
$i=k_{n-6}+1,\dots,k_n$ and in general position in
$F_{(2,1),n}$.
\end{itemize}
Thus, the hypotheses of \Cref{lemL} are satisfied and
\[
E\cap
\left(
I_{L,X_n}
\right)_{(1,1)}
=
\left(
I_{F_{(2,1),n},X_n}
\right)_{(1,1)}.
\]
To conclude, it remains to show that $E \subseteq \left( I_{L,X_n}
\right)_{(1,1)}$. Set
\begin{gather*}
K_{i}'=(I^{\langle2\rangle}_{P_i,X_n}+I_{L,X_n})_{(1,1)},\quad
\Delta_n'=\Delta_n+(I_{L,X_n})_{(1,1)}
\\
K_{i}''=(I^{\langle2\rangle}_{P_i, X_{L}}+I_{L,X_L})_{(1,1)}
\quad
\Delta_n''=\frac{\Delta_n+(I_{L,X_n})_{(1,1)}}{(I_{X_L,X_n})_{(1,1)}}
\end{gather*}
for $i=1,\dots,k_{n-6}$. Since
$$(I_{L,X_n})_{(1,1)}\subseteq E+(I_{L,X_n})_{(1,1)}\subseteq\bigcap_{i=1}^{k_{n-6}}K_i'\cap \Delta_n'\eqcolon E',$$
to conclude it is enough to show that $E'\subseteq(I_{L,X_n})_{(1,1)}$. Since $I_{X_L,X_n}\subset I_{L,X_n}$, we can consider the quotient $E'/(I_{X_L,X_n})_{(1,1)}$ and we have that 
$$E'/(I_{X_L,X_n})_{(1,1)}\subseteq \bigcap_{i=1}^{k_{n-6}}K_{i}''\cap\Delta_n''\eqcolon E''.$$
The points whose ideals appear in $E''$ are $k_{n-6}$ general
points of $L \simeq F_{(2,1),n-6}$. Moreover, by the choice of $\Delta_n$, one has
\[
\codim_{\superwedge^2V_L^*\otimes V_L^*}
(\Delta_n'')
=
\delta_{n-6}
\qquad \text{and}
\qquad
\Delta_n''
\supseteq
\left(
I_{P,X_L}^{\langle2\rangle}
+
I_{L,X_L}
\right)_{(1,1)}.
\]
Therefore, the induction hypothesis for \Cref{goodpost} applies
in dimension $n-6$. Hence, $E''
=
\left(
I_{L,X_L}
\right)_{(1,1)}$.
In particular,
$
E'
\subseteq
\left(
I_{L,X_n}
\right)_{(1,1)}$,
and this concludes the proof.
\end{proof}

We are finally ready to give the complete classification for the defectiveness of $F_{(2,1),n}$. For $n=2$, one has $\S_{(2,1)}V_2
\simeq
\det(V_2)\otimes V_2$,
so the corresponding closed orbit is projectively a line
$\P(V_2)\simeq\P^1$ and its secant dimensions are immediate. We
therefore state the classification for $n\geq3$.

\subsection{Classification}
\label{subsect: classification two step flags}

\begin{theorem}[Classification of the secant varieties of
$F_{(2,1),n}$]
\label{thm: classification two step flags}
Let $n\geq3$. Then
\[
\dim\sigma_s(F_{(2,1),n})
=
\begin{cases}
\min\left\{
\dim\S_{(2,1)}V_n,\,
s(2n-2)
\right\}-2,
&
\text{if }(n,s)=(3,2)\text{ or }(4,3),
\\[2mm]
\min\left\{
\dim\S_{(2,1)}V_n,\,
s(2n-2)
\right\}-1,
&
\text{otherwise}.
\end{cases}
\]
In particular, the only defective cases are
\[
\sigma_2(F_{(2,1),3})
\qquad\text{and}\qquad
\sigma_3(F_{(2,1),4}).
\]
\end{theorem}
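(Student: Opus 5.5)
The plan is to split into the generic range handled by \Cref{goodpost} and the small exceptional dimensions, and to combine them through the reduction in \Cref{rem:reduction}.

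\emph{Nondefectivity for $n\geq5$.} For $n\geq5$, \Cref{goodpost} gives
\[
\bigcap_{i=1}^{k_n}K_i\cap\Delta_n=\left(I_{F_{(2,1),n},X_n}\right)_{(1,1)}
\]
for a suitable $\Delta_n$ of codimension $\delta_n$ containing the double-point conditions at a general $P$. This is exactly \eqref{goal3}. By \Cref{rem:reduction}, \eqref{goal3} implies \eqref{goal} for every $s\geq1$. I would make the ``standard linear-algebra argument'' explicit, distinguishing $s\leq k_n$ and $s>k_n$.

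For $s\leq k_n$, the codimension of $\bigcap_{i\leq k_n}\pi_{(2,1)}(K_i)$ is already $k_n(2n-2)$, the maximal possible. Since each $\pi_{(2,1)}(K_i)$ has codimension at most $2n-2$, every sub-intersection must then have codimension exactly $s(2n-2)$.

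For $s=k_n+1$, the extra conormal space $\pi_{(2,1)}((I^{\langle2\rangle}_{P_{k_n+1},X_n})_{(1,1)})$ is contained in $\pi_{(2,1)}(\Delta_n)$ after specializing $P_{k_n+1}$ to $P$. Hence the intersection is zero, by semicontinuity. For larger $s$ the intersection stays zero.

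Via \Cref{terracini21}, this gives $\dim\sigma_s=\expdim\sigma_s$ for all $s$ when $n\geq5$. It is consistent with the claimed list, which contains no $n\geq5$.

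\emph{The cases $n=3$ and $n=4$.} For $n=4$, \Cref{thm: def of F124} already gives the following:
\begin{itemize}
\item $\sigma_2$ is nondefective;
\item $\sigma_3$ has dimension $16$, i.e.\ defect one;
\item $\sigma_s$ fills $\P^{19}$ for $s\geq4$.
\end{itemize}
Here $2n-2=6$, so $\min\{20,18\}-2=16$ matches. The case $s=1$ is trivial.

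For $n=3$, $F_{(2,1),3}$ is the $3$-fold $\Fl(1,2;\C^3)\subseteq\P^7$, the adjoint variety of $\SL_3$. The claim is that $\sigma_2$ has dimension $6=\min\{8,8\}-2$ and $\sigma_s=\P^7$ for $s\geq3$. I would prove the upper bound as in \Cref{thm: def of F124}. Take $P_1,P_2$ general with $P_i=(\langle v_i\rangle\subset\Pi_i)$, and let $\alpha\in V^*$ vanish on $v_1,v_2$. Let $\beta\in\superwedge^2V^*$ vanish on both Plücker points $v_i\wedge w_i$. Then
\[
\pi_{(2,1)}(\alpha\otimes\beta)\in\bigcap_{i=1}^{2}\left(I_{\mathrm S}(P_i)^{\langle2\rangle}\right)_{(2,1)}
\]
by \Cref{cor: consistency two step flag}. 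It is nonzero because the kernel $\superwedge^3V^*$ embeds in $V^*\otimes\superwedge^2V^*$ with tensor rank $3$, while $\alpha\otimes\beta$ has rank one. This gives $\dim\sigma_2\leq6$.

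The lower bound $\dim\sigma_2\geq6$, and the fact that $\sigma_3$ fills $\P^7$, I would obtain by an explicit Terracini computation at chosen points, with semicontinuity and a Macaulay2 check as in \Cref{sect: defectiveness}. Alternatively, $\sigma_2$ of the adjoint variety of $\mathfrak{sl}_3$ is the hypersurface $\det=0$ in $\P(\mathfrak{sl}_3)$, which has dimension $6$.

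\emph{Main obstacle.} The main obstacle is not the bookkeeping but the passage from \eqref{goal3} to \eqref{goal} in the superabundant range when $\delta_n>0$. There I must check that specializing the extra point onto $P$ is legitimate: one needs $\pi_{(2,1)}\bigl((I^{\langle2\rangle}_{P,X_n})_{(1,1)}\bigr)\subseteq\pi_{(2,1)}(\Delta_n)$, which holds by the containment built into $\Delta_n$. One also needs the upper semicontinuity of the intersection dimension to run in the right direction. Here the special position gives an upper bound on the dimension for general points, and that upper bound is exactly what is required.
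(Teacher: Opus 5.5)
Your proposal is correct. For $n\geq4$ it is the paper's proof: \Cref{thm: def of F124} handles $n=4$, and \Cref{rem:reduction} together with \Cref{goodpost} handles $n\geq5$.

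Your explicit version of the ``standard linear-algebra argument'' is also sound. Since $\Delta_n\supseteq\ker\pi_{(2,1)}$, the subspace $\pi_{(2,1)}(\Delta_n)$ has codimension $\delta_n$. Hence \Cref{goodpost} forces $\dim\bigcap_{i\le k_n}\pi_{(2,1)}(K_i)\le\delta_n$, which gives equality and maximal codimension for every sub-intersection. Specializing the $(k_n+1)$-st point to $P$ then settles $s=k_n+1$, because $\delta_n<2n-2$.

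The one genuine difference is $n=3$. The paper simply cites \cite[Theorem~1.4]{BD10}; you reprove it inside the paper's framework.
\begin{itemize}
\item Your upper bound $\dim\sigma_2\le6$ uses the mechanism of \Cref{thm: def of F124}. The rank-one tensor $\alpha\otimes\beta$ lies in both double-point ideals but not in $\ker\pi_{(2,1)}=\superwedge^3V^*$, which is spanned by a rank-three tensor.
\item Your lower bound identifies $\sigma_2$ of the adjoint variety of $\mathfrak{sl}_3$ with the traceless determinantal hypersurface.
\end{itemize}
The citation is shorter. Your route is self-contained, explains the defective cases $n=3$ and $n=4$ by the same Schur-apolar construction, and needs no computer algebra for $n=3$.

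If you use the determinantal description, add two short checks:
\begin{itemize}
\item A general traceless rank-two matrix, conjugate to $\operatorname{diag}(0,a,-a)$, is a sum of two traceless rank-one matrices. This reduces to writing $\operatorname{diag}(a,-a)$ as half the sum of the nilpotent matrices $\begin{pmatrix} a & a\\ -a & -a\end{pmatrix}$ and $\begin{pmatrix} a & -a\\ a & -a\end{pmatrix}$.
\item $\sigma_3=\P^7$ follows because $\sigma_2\neq\P^7$ and $F_{(2,1),3}$ is nondegenerate, so $\sigma_3\supsetneq\sigma_2$.
\end{itemize}
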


\begin{proof}
The case $n=3$ follows from
\cite[Theorem~1.4]{BD10}, and the case $n=4$ follows from
\Cref{thm: def of F124}. For $n\geq5$, the statement follows from
\Cref{rem:reduction} and \Cref{goodpost}.
\end{proof}

\bibliographystyle{alpha}
\bibliography{references.bib}

@article{Baur2004,
author = {Baur, K. and Draisma, J.},
title = {{Higher secant varieties of the minimal adjoint orbit}},
journal = {Journal of Algebra},
volume = {280},
number = {2},
pages = {743-761},
year = {2004},
issn = {0021-8693},
}

@misc{Repo,
  author       = {Bernardi, A. and Canino, S. and Isoldi, V. A.},
  title        = {{Supplementary code for ``Secant varieties of flag varieties via Schur apolarity''}},
  year         = {2026},
  publisher    = {Zenodo},
  howpublished = {\url{https://doi.org/10.5281/zenodo.22262086}},
}

@phdthesis{StaffolaniThesis,
    author = {Staffolani, R.},
    title  = {{Schur apolarity and how to use it}},
    school = {University of Trento},
    year   = {2022},
}

@mastersthesis{IsoldiThesis,
    author = {Isoldi, V. A.},
    title  = {Schur Apolarity and the Dual Terracini Lemma for Flag Varieties: A Geometric and Representation Theoretic Approach},
    school = {University of Trento},
    year   = {2025},
}

@article{Staffolani2023,
    author       = {Staffolani, R.},
    title        = {{Schur apolarity}},
    journal      = {Journal of Symbolic Computation},
    volume       = {114},
    pages        = {37--73},
    year         = {2023},
}

@article{ArrondoBernardiMarquesMourrain2021,
    author       = {Arrondo, E. and Bernardi, A. and Macias Marques, P. and Mourrain, B.},
    title        = {Skew-symmetric tensor decomposition},
    journal      = {{Communications in Contemporary Mathematics}},
    volume       = {23},
    number       = {02},
    pages        = {1950061},
    year         = {2021},
}

@article{Ottaviani2013,
  author  = {Ottaviani, G.},
  title   = {Five Lectures on Projective Invariants},
  journal = {Rendiconti del Seminario Matematico della Università Politecnica di Torino},
  volume   = {71},
  number   = {1},
  year     = {2013},
  pages    = {119--194},
}

@article{BD10,
	author = {Baur, K. and Draisma, J.},
	doi = {10.1515/ADVGEOM.2010.001},
	fjournal = {Advances in Geometry},
	issn = {1615-715X,1615-7168},
	journal = {Adv. Geom.},
	mrclass = {14N05 (14T05)},
	mrnumber = {2603719},
	mrreviewer = {Nicolas\ Perrin},
	number = {1},
	pages = {1--29},
	title = {Secant dimensions of low-dimensional homogeneous varieties},
	url = {https://doi.org/10.1515/ADVGEOM.2010.001},
	volume = {10},
	year = {2010}}

@book{Fulton1997,
  author    = {Fulton, W.},
  title     = {Young Tableaux: With Applications to Representation Theory and Geometry},
  series    = {London Mathematical Society Student Texts},
  volume    = {35},
  publisher = {Cambridge University Press},
  address   = {Cambridge},
  year      = {1997}
}

@book{FultonHarris1991,
  author    = {Fulton, W. and Harris, J.},
  title     = {Representation Theory: A First Course},
  series    = {Graduate Texts in Mathematics},
  volume    = {129},
  publisher = {Springer-Verlag},
  address   = {New York},
  year      = {1991},
  doi       = {10.1007/978-1-4612-0979-9}
}

@article{bernardi_hitchhiker_2018,
  author  = {Bernardi, A. and Carlini, E. and
             Catalisano, M.V. and Gimigliano, A. and
             Oneto, A.},
  title   = {The {Hitchhiker} {Guide} to: {Secant} {Varieties} and
             {Tensor} {Decomposition}},
  journal = {Mathematics},
  volume  = {6},
  number  = {12},
  year    = {2018},
  doi     = {10.3390/math6120314}
}

@article{AlexanderHirschowitz1995,
  author  = {Alexander, J. and Hirschowitz, A.},
  title   = {{Polynomial interpolation in several variables}},
  journal = {Journal of Algebraic Geometry},
  volume  = {4},
  number  = {2},
  pages   = {201--222},
  year    = {1995}
}

@article{BrambillaOttaviani2008,
  author  = {Brambilla, M.C. and Ottaviani, G.},
  title   = {{On the Alexander--Hirschowitz theorem}},
  journal = {Journal of Pure and Applied Algebra},
  volume  = {212},
  number  = {5},
  pages   = {1229--1251},
  year    = {2008},
  doi     = {10.1016/j.jpaa.2007.09.014}
}

@book{IarrobinoKanev1999,
  author    = {Iarrobino, A. and Kanev, V.},
  title     = {{Power Sums, Gorenstein Algebras, and Determinantal
                Loci}},
  series    = {Lecture Notes in Mathematics},
  volume    = {1721},
  publisher = {Springer-Verlag},
  address   = {Berlin},
  year      = {1999},
  doi       = {10.1007/BFb0093426}
}

@article{BFCM22,
  author  = {Barbosa Freire, A. and Casarotti, A. and Massarenti, A.},
  title   = {{On secant dimensions and identifiability of flag varieties}},
  journal = {Journal of Pure and Applied Algebra},
  volume  = {226},
  number  = {6},
  pages   = {106969},
  year    = {2022},
  doi     = {10.1016/j.jpaa.2021.106969},
}

@misc{BC23,
  author        = {Taveira Blomenhofer, A. and Casarotti, A.},
  title         = {{Nondefectivity of invariant secant varieties}},
  year          = {2023},
  eprint        = {2312.12335},
  archivePrefix = {arXiv},
  primaryClass  = {math.AG},
}

@article{Lasker1904,
  author  = {Lasker, E.},
  title   = {{Zur Theorie der kanonischen Formen}},
  journal = {Mathematische Annalen},
  volume  = {58},
  pages   = {434--440},
  year    = {1904},
}

\end{document}